\documentclass[reqno]{amsart}

\usepackage{amssymb}
\usepackage{graphicx}
\usepackage{amscd}
\usepackage[pagebackref]{hyperref}
\usepackage{color}
\usepackage{tabularx}
\usepackage[table]{xcolor}
\usepackage{float}
\usepackage{graphics,amsmath,amssymb}
\usepackage{amsthm}
\usepackage{amsfonts}
\usepackage{latexsym}
\usepackage{epsf}
\usepackage{xifthen}
\usepackage{mathrsfs}
\usepackage{dsfont}
\usepackage{makecell}
\usepackage{subfig}
\usepackage{amsmath}
\usepackage{listings}
\usepackage{etoolbox}
\usepackage{fancyhdr}
\usepackage{pdflscape}
\usepackage[title,toc,titletoc]{appendix}
\usepackage{enumitem}
\usepackage[noadjust]{cite}
\usepackage{tikz}
\usetikzlibrary{automata,positioning,arrows}
\usepackage{young}
\usepackage[object=vectorian]{pgfornament} 
\usepackage{lipsum,tikz}
\usepackage{multirow}
\usepackage[OT2,T1]{fontenc}
\usepackage{mathtools}
\usepackage{ytableau}

\hypersetup{
	colorlinks=true, 
	linktoc=all, 
	linkcolor=blue} 

\numberwithin{equation}{section}

\theoremstyle{theorem}
\newtheorem{theorem}{Theorem}[section]
\newtheorem*{theorem*}{Theorem}

\newtheorem{corollary}[theorem]{Corollary}
\newtheorem{lemma}[theorem]{Lemma}

\newtheorem{innercustomgeneric}{\customgenericname}
\providecommand{\customgenericname}{}
\newcommand{\newcustomtheorem}[2]{%
	\newenvironment{#1}[1]
	{%
		\renewcommand\customgenericname{#2}%
		\renewcommand\theinnercustomgeneric{##1}%
		\innercustomgeneric
	}
	{\endinnercustomgeneric}
}
\newcustomtheorem{ctheorem}{Theorem}
\newcustomtheorem{clemma}{Lemma}

\theoremstyle{definition}

\newtheorem*{example*}{Example}
\newtheorem*{examples*}{Examples}

\newtheorem*{remark*}{Remark}
\newtheorem*{remarks*}{Remarks}
\newtheorem*{note*}{Note}

\newtheoremstyle{named}{}{}{\itshape}{}{\bfseries}{.}{.5em}{\thmnote{#3} #1}
\theoremstyle{named}

\newtheoremstyle{customized}{}{}{\itshape}{}{\bfseries}{.}{.5em}{\thmnote{#3}}
\theoremstyle{customized}

\newcommand{\arxiv}[1]{\href{http://arxiv.org/abs/#1}{arXiv:#1}}

\DeclareMathAlphabet{\mydutchcal}{U}{dutchcal}{m}{n}
\DeclareMathAlphabet{\myrsfso}{U}{rsfso}{m}{n}
\newcommand{\dcH}{\mydutchcal{H}}

\newcommand{\qbinom}[2]{{\genfrac{[}{]}{0pt}{}{#1}{#2}}}
\newcommand{\tqbinom}[2]{{\textstyle\genfrac{[}{]}{0pt}{}{#1}{#2}}}

\newcommand{\qangle}[2]{{\genfrac{\langle}{\rangle}{0pt}{}{#1}{#2}}}
\newcommand{\tqangle}[2]{{\textstyle\genfrac{\langle}{\rangle}{0pt}{}{#1}{#2}}}

\newcommand{\bigangle}[1]{\big\langle #1 \big\rangle}

\newcommand{\len}{\mathsf{len}}

\newcommand{\trans}{\mathsf{T}}

\newcommand{\HS}{\,\,\mathrm{HS}}

\newcommand{\Sp}{\operatorname{sp}}
\newcommand{\Spomega}{\operatorname{sp}^{\mathsf{T}}}

\title{Strong $q$-log-convexity of the $d$-Hoggatt polynomials}

\author[S. Chern]{Shane Chern}
\address[S. Chern]{Fakult\"at f\"ur Mathematik, Universit\"at Wien, Oskar-Morgenstern-Platz 1, Wien 1090, Austria}
\email{chenxiaohang92@gmail.com, xiaohangc92@univie.ac.at}

\author[W. Shi]{Wenle Shi}
\address[W. Shi]{School of Mathematical Sciences, Dalian University of Technology, Dalian 116024, P.R. China}
\email{shi-wenle@hotmail.com}

\date{}

\keywords{$d$-Hoggatt number, $d$-Hoggatt polynomial, $d$-Hoggatt transform, $q$-log-convexity, $q$-log-concavity, Schur positivity.}

\subjclass[2020]{05E05, 05A05, 05A20.}

\begin{document}
	
\sloppy

\begin{abstract}
	In this paper, three ($q$-)log-convexity or concavity properties related to the $d$-Hoggatt numbers are investigated. First, we show that the $d$-Hoggatt transformation preserves the log-convexity of a sequence. We then establish the strong $q$-log-concavity of a $q$-analog of $d$-Hoggatt numbers. Finally, we prove the core result of this work, namely, the sequence of $d$-Hoggatt polynomials is strongly $q$-log-convex, based on the theory of Schur functions.
\end{abstract}

\maketitle

\section{Introduction}

Given a sequence $(a_n)_{n\ge 0}$ of nonnegative real numbers, we say it is \emph{log-concave} if the inequality
\begin{align*}
	a_n^2\ge a_{n-1}a_{n+1}
\end{align*}
holds for all $n\ge 1$, and \emph{log-convex} if
\begin{align*}
	a_{n-1}a_{n+1}\ge a_n^2.
\end{align*}
If we lift elements in the sequence to polynomials in $\mathbb{R}[q]$ with $q$ an indeterminate and define the $q$-analog of the order relation ``$\ge$'' by $f(q)\ge_q g(q)$ provided that $f(q)-g(q)\in \mathbb{R}_{\ge 0}[q]$, then the concept of log-concavity and log-convexity is naturally extended to \emph{$q$-log-concavity} and \emph{$q$-log-convexity}. Sagan~\cite[p.~290]{Sag1992} further introduced the nontion of strong $q$-log-concavity: a sequence $(a_n(q))_{n\ge 0}$ of polynomials over $\mathbb{R}$ is \emph{strongly $q$-log-concave} if
\begin{align*}
	a_n(q)a_m(q)\ge_q a_{n+1}(q)a_{m-1}(q)
\end{align*}
for all $n\ge m\ge 1$. In the same vein, we say the sequence $(a_n(q))_{n\ge 0}$ is \emph{strongly $q$-log-convex} if
\begin{align*}
	a_{n+1}(q)a_{m-1}(q)\ge_q a_n(q)a_m(q).
\end{align*}
In particular, for ordinary $q$-log-concavity and $q$-log-convexity, we only require the two inequalities for the case where $m$ and $n$ are equal.

Inspired by a conjecture of Verner Hoggatt in private communication, Fielder and Alford~\cite[Section~4]{FA1989} introduced the \emph{$d$-Hoggatt triangle} $\big(\tqangle{n}{k}^{(d)}\big)_{0\le k\le n}$, which specializes to the \emph{Pascal triangle} $\big(\binom{n}{k}\big)_{0\le k\le n}$ at $d=1$. The original definition of Fielder and Alford~\cite[p.~163, eq.~(13)]{FA1989} is recursive. However, it is easy to derive the following product formula in terms of binomial coefficients from their recurrence relation:
\begin{align}\label{eq:d-H-def}
	\qangle{n}{k} = \qangle{n}{k}^{(d)} := \prod_{j=0}^{n-1} \frac{\binom{n+j}{k}}{\binom{k+j}{k}},
\end{align}
valid for $n,k\ge 0$. It is known that the $2$- and $3$-Hoggatt triangles are the \emph{Narayana triangle}~\cite[Entry~A001263]{OEIS} and the \emph{Baxter triangle}~\cite[Entry~A056939]{OEIS}, respectively.

For a triangular array of combinatorial numbers $(S_{n,k})_{0\leq k \leq n}$, we define a linear transformation that maps a sequence $(a_n)_{n\ge0}$ to a new sequence $(c_n)_{n\ge0}$ by 
\begin{align*}
	c_n=\sum_{k=0}^{n}S_{n,k}a_k.
\end{align*}
We know from the \emph{Davenport--P\'{o}lya theorem}~\cite[p.~1, Theorem~2]{DP1949} that if the sequences $(a_n)_{n\geq0}$ and $(b_n)_{n\geq0}$ are log-convex, then so is their binomial convolution 
\begin{align*}
	c_n=\sum_{k=0}^{n}\binom{n}{k}a_kb_{n-k}.
\end{align*}
Motivated by this phenomenon, Liu and Wang~\cite[p.~474, Conjecture~5.2]{LW2007} conjectured that the Narayana transformation also preserves the log-convexity. This prediction was later confirmed by Chen, Wang, and Yang~\cite[p.~318, Theorem~4.3]{CWY2010}. Quite recently, Liu and Mao~\cite[Theorem~6.1]{LM2026} further discovered the same behavior of the Baxter transformation. In this work, we provide a unified treatment on the generic $d$-Hoggatt transformation.

\begin{theorem}\label{thm:main1}
	Let $d\geq 1$ be fixed. If the sequence $(a_n)_{n\geq0}$ of nonnegative real numbers is log-convex, then so is the sequence $(c_n)_{n\geq0}$ where
	\begin{align*}
		c_n=\sum_{k=0}^{n}\qangle{n}{k}^{(d)}a_k.
	\end{align*}
\end{theorem}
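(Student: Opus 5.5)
The plan is to turn log-convexity of $(c_n)$ into a statement about the $d$-Hoggatt triangle alone, and then to verify that statement from the product formula \eqref{eq:d-H-def}. I read the upper limit of the product there as $d-1$, since $d$ must appear and this reproduces Pascal, Narayana and Baxter for $d=1,2,3$; this reading should be confirmed before relying on it.

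\textbf{Step 1: reduction to a coefficient condition.} Fix $n\ge1$ and write $S_{m,i}=\tqangle{m}{i}^{(d)}$. Expanding,
\begin{align*}
c_{n-1}c_{n+1}-c_n^2=\sum_{k\ge0}\sum_{i=0}^{k}\big(S_{n-1,i}S_{n+1,k-i}-S_{n,i}S_{n,k-i}\big)a_ia_{k-i}.
\end{align*}
Group each $k$-block symmetrically into pairs $\{i,k-i\}$ with $i\le k/2$. Let $T_k(i)$ be the resulting coefficient of $a_ia_{k-i}$: for $i<k/2$,
\begin{align*}
T_k(i)=S_{n-1,i}S_{n+1,k-i}+S_{n+1,i}S_{n-1,k-i}-2S_{n,i}S_{n,k-i},
\end{align*}
and for $i=k/2$ it is $S_{n-1,i}S_{n+1,i}-S_{n,i}^2$.

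Log-convexity of $(a_n)$ gives $a_ia_{k-i}\ge a_{i+1}a_{k-i-1}$ for $i<k/2$. So the products $a_ia_{k-i}$ decrease as $i$ moves toward the centre, and all are nonnegative. By Abel summation it suffices to prove
\begin{align*}
\sum_{i=0}^{r}T_k(i)\ge0\quad\text{for all } 0\le r\le \lfloor k/2\rfloor \text{ and all } k.
\end{align*}
This is the Chen--Wang--Yang criterion used for the Narayana case.

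\textbf{Step 2: rewrite via ratios.} From \eqref{eq:d-H-def},
\begin{align*}
S_{n+1,i}/S_{n,i}=\prod_j\frac{n+1+j}{n+1+j-i},\qquad S_{n-1,i}/S_{n,i}=\prod_j\frac{n-i+j}{n+j}.
\end{align*}
Both are explicit rational functions of $i$; the first increases in $i$ and the second decreases. Hence the summands $T_k(i)$ become $S_{n,i}S_{n,k-i}$ times an explicit rational function of $i,k,n,d$.

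\textbf{Step 3: prove the partial sums are nonnegative.} This is where the difficulty lies. I would first handle the full sum $r=\lfloor k/2\rfloor$, which is the coefficient of $x^k$ in $R_{n-1}(x)R_{n+1}(x)-R_n(x)^2$ with $R_m(x)=\sum_iS_{m,i}x^i$. To do this I would use the interpretation of $S_{n,i}$ as a MacMahon box count (plane partitions in an $i\times(n-i)\times(d-1)$ box), equivalently a principally specialised Schur function of rectangular shape. Then try to exhibit the needed inequality via a Lindström--Gessel--Viennot or Schur-positivity argument.

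For the partial sums, I would show that $T_k(i)$ changes sign at most once on $0\le i\le k/2$, from positive to negative. Given that, the partial sums are unimodal in $r$ and bounded below by $\min\big(T_k(0),\sum_{i\le k/2}T_k(i)\big)$. Each of these is nonnegative: $T_k(0)$ is directly checkable, and the full sum is handled above. The single sign change should follow from the monotonicity of the two ratio families in Step 2, since the relevant factor behaves like $r_ir'_{k-i}+r'_ir_{k-i}-2$. This sign pattern, uniformly in $d$, is the step I expect to be the main obstacle. If it fails, the fallback is to bound the partial sums directly through a telescoping identity in $i$ derived from the rational form.
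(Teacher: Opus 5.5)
Your Step~1 and the endgame of Step~3 follow the same architecture as the paper's proof. You show that the coefficients $T_k(i)$ (the paper's $\alpha_i(n,k)$) change sign at most once, from $\ge0$ to $\le0$, and so reduce everything to nonnegativity of the full sum $\sum_{i\le k/2}T_k(i)$. Your Abel summation and the paper's device of subtracting $a_{k'}a_{r-k'}$ are interchangeable. Your reading of \eqref{eq:d-H-def} as a product over $0\le j\le d-1$ is also correct.

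The genuine gap is the full sum itself. As you note, it is the coefficient of $q^k$ in $H_{n-1}(q)H_{n+1}(q)-H_n(q)^2$, so its nonnegativity for all $k$ \emph{is} the $q$-log-convexity of the $d$-Hoggatt polynomials. For general $d$ this was an open conjecture of Mao and Shi. It is the central result of the paper (Theorem~\ref{thm:main2}, occupying Sections~\ref{sec:Schur-positivity} and~\ref{sec:d-Hoggatt}), and the proof of Theorem~\ref{thm:main1} simply invokes it. ``Try a Lindstr\"om--Gessel--Viennot or Schur-positivity argument'' is not a proof of it.

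Note that the factors are $s_{R_i}(1_{n-1})$, $s_{R_i}(1_n)$, $s_{R_i}(1_{n+1})$, evaluations at alphabets of different sizes. So there is no single symmetric function whose Schur positivity you can simply quote. The paper's missing idea has two parts:
\begin{itemize}
\item take $X=A\oplus B$ with $A=1_1$, $B=1_{n-1}$, adjoin one variable $z=1$, and write $\dcH_{X\oplus z}(q)\dcH_B(q)-\dcH_X(q)\dcH_{B\oplus z}(q)=\sum_{r\ge1}q^r\sum_{j=1}^{d-1}z^{d-j}\sum_\beta s_\beta(A)\Phi^{\beta}_{r-1,j}(B)$;
\item prove $\Phi^{\beta}_{t,j}\ge_s0$ (Theorem~\ref{th:Phi-Schur-positivity}) by a delicate induction on $t$.
\end{itemize}
(Also, $\tqangle{n}{i}^{(d)}$ counts plane partitions in an $i\times(n-i)\times d$ box, not $i\times(n-i)\times(d-1)$.)

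You also place the main obstacle in the wrong spot. The single sign change is the easy part (Lemma~\ref{le:k'}), but it does not follow from the monotonicity of the two ratio families alone. Put $r_i=S_{n+1,i}/S_{n,i}$ and $r'_i=S_{n-1,i}/S_{n,i}$, so that for $i<k/2$ one has $T_k(i)=S_{n,i}S_{n,k-i}\,(r'_ir_{k-i}+r_ir'_{k-i}-2)$, and half of this at $i=k/2$. As $i$ grows, $r'_ir_{k-i}$ decreases while $r_ir'_{k-i}$ increases, so the two trends compete.

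What is needed is an explicit computation. The bracket equals $\frac{n+d}{n}\beta_i-2$, where $\beta_i=\frac{(n-i)_d}{(n-k+i+1)_d}+\frac{(n-k+i)_d}{(n-i+1)_d}$, and
\begin{align*}
\beta_i-\beta_{i+1}=d(2n-k+d)\left(\frac{(n-i)_{d-1}}{(n-k+i+1)_{d+1}}-\frac{(n-k+i+1)_{d-1}}{(n-i)_{d+1}}\right)\ge 0,
\end{align*}
because $n-i>n-k+i+1\ge1$. This normalization is valid only for $i\ge k-n$, where $S_{n,k-i}>0$. For $i\le k-n-2$ one has $T_k(i)=0$, and $T_k(k-n-1)=S_{n-1,k-n-1}>0$, so the single sign change survives.
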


Note that in the proof of the Narayana case, the basic idea of Chen, Wang, and Yang builds on the strong $q$-log-convexity of the Narayana polynomials~\cite[p.~311, Theorem~3.2]{CWY2010}, the weaker version of which was first conjectured by Liu and Wang~\cite[p.~474, Conjecture~5.1]{LW2007}. Later on, Zhu~\cite[p.~605, Proposition~3.20]{Zhu2013} offered an alternative proof of this strong $q$-log-convexity. For Baxter polynomials, we recently witnessed their strong $q$-log-convexity in the work of Liu and Mao~\cite[Theorem~5.1]{LM2026}. Now let us define the \emph{$d$-Hoggatt polynomials} by
\begin{align*}
	H_n(q) = H^{(d)}_n(q) := \sum_{k=0}^{n}\qangle{n}{k}^{(d)}q^k.
\end{align*}
Mao and the second author \cite[p.~17]{MS2026} conjectured that $\big(H^{(d)}_n(q)\big)_{n\geq0}$ is $q$-log-convex for fixed $d\ge 1$. Here we not only provide an affirmative answer to this problem but also establish a strengthening toward strong $q$-log-convexity, which will serve as an essential tool in our proof of Theorem~\ref{thm:main1}. This is also the most central result of the present work.

\begin{theorem}\label{thm:main2}
	Let $d\geq 1$ be fixed. The sequence $\big(H^{(d)}_n(q)\big)_{n\geq0}$ is strongly $q$-log-convex.
\end{theorem}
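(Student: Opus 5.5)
We need $H_{n+1}H_{m-1}-H_nH_m\ge_q 0$ for $n\ge m\ge1$. The plan is to interpret the $d$-Hoggatt numbers as Schur function specializations, so that the coefficient inequality becomes Schur positivity of a difference of products of Schur functions.

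\emph{Schur-function model.} By the hook-content formula, $\prod_{j=0}^{n-1}\binom{n+j}{k}/\binom{k+j}{k}$ equals the number of semistandard tableaux of rectangular shape $(k^d)$ (or its transpose, $d$ rows of length $k$ with entries bounded by $n$, up to a shift). That is,
\[
\qangle{n}{k}^{(d)} = s_{(k^d)}(1^{n-k+?})
\]
for a suitable shift; equivalently it counts $d$-tuples of nonintersecting lattice paths (MacMahon's plane partitions in a $k\times (n-k)\times d$ box). I would fix the precise normalization first. Using MacMahon symmetry of the box, $\qangle{n}{k}^{(d)}$ equals the number of plane partitions in a $d\times k\times(n-k)$ box, i.e.\ $s_{(k^{n-k})}(1^{d})$, up to this choice. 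This gives
\[
H_n(q)=\sum_{k}s_{(k^{n-k})}(1^d)\,q^k .
\]

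\emph{Reduction to a Schur inequality.} The coefficient of $q^r$ in $H_{n+1}H_{m-1}-H_nH_m$ is
\[
\sum_{i+j=r}\Bigl(s_{(i^{n+1-i})}s_{(j^{m-1-j})}-s_{(i^{n-i})}s_{(j^{m-j})}\Bigr)
\]
evaluated at $1^d$. It therefore suffices to show that this symmetric function is Schur positive in $d$ variables, or at least that it is monomial-positive, since evaluation at $1^d$ preserves nonnegativity. The tool I intend to use is the Lam--Postnikov--Pylyavskyy theorem: $s_{\lambda\cup\mu}s_{\lambda\cap\mu}$ minus $s_\lambda s_\mu$ is Schur positive, and more generally so is the difference for the sort of rows and columns. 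For each $i,j$, the two rectangles $(i^{n+1-i})$ and $(j^{m-1-j})$ should be obtained from $(i^{n-i})$ and $(j^{m-j})$ by such a sorting. This does not hold termwise, because both pairs have the same widths but different heights. So I would instead pair term $(i,j)$ of the negative sum with a reindexed positive term, using the involution $(i,j)\mapsto(j,i)$ together with the symmetry $\qangle{n}{k}=\qangle{n}{n-k}$, and match shapes of the form (rectangle $\cup$ rectangle, rectangle $\cap$ rectangle).

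\emph{Expected obstacle and fallback.} Finding a pairing in which every negative product is dominated by a positive product of the LPP type is the crux, and I expect the main difficulty there. In particular the boundary terms where $i>m-1$ or $j$ is out of range must be handled so that nothing negative is left over. If no clean bijective pairing exists, I would use a lattice-path (Lindström--Gessel--Viennot) interpretation instead. Here $\qangle{n}{k}^{(d)}$ is a $d\times d$ determinant of binomials $\det\bigl(\binom{n}{k+i-j}\bigr)$, and I would run a path-switching involution on pairs of path families counted by $H_nH_m$. This injects them into pairs counted by $H_{n+1}H_{m-1}$ while preserving the total $q$-weight $i+j$, in the style of the Narayana proof of Chen--Wang--Yang, where switching at the first intersection of the two families does the job.
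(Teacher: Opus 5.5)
\textbf{There is a genuine gap.} The proposal already fails at the Schur-function model, before any positivity question comes up.

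The identity $\qangle{n}{k}^{(d)}=s_{(k^{n-k})}(1^d)$ is false. For $d=2$, $n=3$, $k=1$ the left side is $6$, while $s_{(1,1)}(1,1)=1$. The right side also vanishes once $n-k>d$. For $d=1$ no model in $d$ variables can exist at all, since $s_\lambda(1)\in\{0,1\}$ whereas $\qangle{n}{k}^{(1)}=\binom{n}{k}$.

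The correct forms from MacMahon symmetry are $s_{(d^k)}(1_n)$, $s_{((n-k)^k)}(1_{k+d})$ and $s_{(k^{n-k})}(1_{n-k+d})$. In each, the number of variables changes with $n$ or $k$. So the coefficient of $q^r$ in $H_{n+1}H_{m-1}-H_nH_m$ is not one symmetric function specialized at one alphabet. In fact, in the model $H_N(q)=\sum_k s_{(d^k)}(1_N)q^k$, the difference would be identically zero if all four alphabets coincided. The inequality comes entirely from the alphabet sizes $(n+1,m-1)$ versus $(n,m)$.

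That is the idea you are missing. The paper takes $A=1_{n-m+1}$, $B=1_{m-1}$, $z=1$, so the four alphabets are $A\oplus B\oplus z$, $B$, $A\oplus B$, $B\oplus z$. It then expands using $s_{(d^k)}(X\oplus z)=\sum_{j=0}^{d}s_{P^{(j)}_{k-1}}(X)z^{d-j}$ and $s_\lambda(A\oplus B)=\sum_\beta s_\beta(A)s_{\lambda/\beta}(B)$. The $j=0$ and $j=d$ contributions cancel, and the difference becomes $\sum_{r\ge1}q^r\sum_{j=1}^{d-1}z^{d-j}\sum_\beta s_\beta(A)\Phi^\beta_{r-1,j}(B)$. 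Everything therefore reduces to showing $\Phi^\beta_{t,j}=\sum_{u+v=t}\bigl(s_{P^{(j)}_u/\beta}s_{R_v}-s_{R_u/\beta}s_{P^{(j)}_v}\bigr)\ge_s 0$ in the single alphabet $B$.

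Even with a correct model, the positivity step is not supplied. Lam--Postnikov--Pylyavskyy (in the $\vee/\wedge$ or sort form) applied to two rectangles $(i^a)$, $(j^b)$ either returns the same pair or produces non-rectangular shapes such as $(i^a,j^{b-a})$ and $(j^a)$. It therefore never outputs the products occurring in $H_{n+1}H_{m-1}$. You acknowledge this, but the pairing meant to repair it is exactly the content of the theorem, and it is left open.

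What actually has to be proved concerns the skew near-rectangles $P^{(j)}_u/\beta$. The paper proves it by induction on $t$, splitting on whether the index partition $\alpha$ has a part equal to $d$. In the first case it uses a Cho--Jung--Moon reduction of Littlewood--Richardson coefficients; in the second, a block factorization of dual Jacobi--Trudi determinants in $d$ and $d-1$ variables. No pairing of rectangle products substitutes for this.

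The lattice-path fallback is likewise only a plan. It describes no weight-preserving injection between pairs of $d$-tuples of nonintersecting path families of lengths $(n,m)$ and $(n+1,m-1)$. Also, the Chen--Wang--Yang proof for $d=2$ that you invoke is a Schur-positivity argument, not a first-intersection switch, so it gives no template here.
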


Cigler~\cite{Cig2021} recently considered a $q$-analog of the $d$-Hoggatt numbers:\footnote{We multiply the original definition of Cigler~\cite[p.~10, eq.~(22)]{Cig2021} by the prefactor $q^{d\binom{k}{2}}$. This is consistent with Cigler's remark on \cite[p.~14]{Cig2021} and the $q$-Narayana numbers studied by Chen, Wang, and Yang~\cite[p.~306]{CWY2010}.}
\begin{align}\label{eq:q-H-def}
	\qangle{n}{k}_q = \qangle{n}{k}^{(d)}_q := q^{d\binom{k}{2}}\prod_{j=0}^{d-1}\frac{\qbinom{n+j}{k}_q}{\qbinom{k+j}{k}_q},
\end{align}
where the \emph{$q$-binomial coefficients} are given by
\begin{align*}
	\qbinom{n}{k}_q:=\frac{[n]_q!}{[k]_q![n-k]_q!},
\end{align*}
with
\begin{align*}
	[m]_q:=\frac{1-q^m}{1-q}, \qquad\qquad [m]_q!:=[1]_q[2]_q\cdots[m]_q.
\end{align*}

For the sequence $\big(\tqbinom{n}{k}_q\big)_{0\le k\le n}$ of $q$-binomial coefficients with $n$ fixed, its strong $q$-log-concavity was shown by Butler~\cite[p.~59, Theorem~4.2]{But1990} and Sagan~\cite[p.~292, Theorem~2.3]{Sag1992}. Simultaneously, Butler~\cite[p.~62, Section~5]{But1990} proved that the sequence $\big(\tqbinom{n}{k}_q\big)_{n\ge k}$ with $k$ fixed is strongly $q$-log-concave via a partition-theoretic approach and Sagan~\cite[p.~297, Corollary~3.4(2)]{Sag1992} provided another proof by the theory of symmetric functions. Around the same time, Krattenthaler~\cite[p.~334, Theorem~1]{Kra1989}, independently, established an even stronger result that for $n\ge m\ge 0$ and $k\ge l \ge 0$,
\begin{align*}
	\qbinom{n}{l}_q \qbinom{m}{k}_q\ge_q \qbinom{n}{l-1}_q \qbinom{m}{k+1}_q.
\end{align*}
A parallel inequality
\begin{align*}
	\qbinom{n}{l}_q \qbinom{m}{k}_q\ge_q \qbinom{n+1}{l}_q \qbinom{m-1}{k}_q,
\end{align*}
valid for nonnegative integers $n,m,l,k$ with $k\ge l$ and $n+k\ge m+l$, was discovered by the first author~\cite[p.~34, Theorem~1.3]{Che2023}. Moving forward to the $q$-Narayana numbers, a similar strong $q$-log-concavity property was discovered by Chen, Wang, and Yang~\cite[p.~320, Theorem~5.2 and p.~323, Theorem~5.6]{CWY2010} for fixed $n$ or fixed $k$. Recently, Liu and Mao~\cite[Theorems~6.6 and 6.7]{LM2026} further proved that for $d\ge 1$, the $q$-analog $\tqangle{n}{k}^{(d)}_q$ of the $d$-Hoggatt numbers is $q$-log-concave for fixed $n$ or fixed $k$. In this work, we strengthen this discovery to strong $q$-log-concavity.

\begin{theorem}\label{thm:main3}
	Let $d\geq 1$ and $n\ge 0$ be fixed. The sequence $\big( \tqangle{n}{k}^{(d)}_q\big)_{0\le k\le n}$ is strongly $q$-log-concave.
\end{theorem}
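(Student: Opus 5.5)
The natural route is to write the $q$-analog as a product of factors that are each strongly $q$-log-concave in $k$. Then one can use the classical fact that the termwise product of strongly $q$-log-concave sequences of polynomials with nonnegative coefficients is again strongly $q$-log-concave. That fact is immediate: if $a_na_m\ge_q a_{n+1}a_{m-1}$ and $b_nb_m\ge_q b_{n+1}b_{m-1}$ with all entries in $\mathbb{R}_{\ge0}[q]$, then
\begin{align*}
a_na_mb_nb_m-a_{n+1}a_{m-1}b_{n+1}b_{m-1}=(a_na_m-a_{n+1}a_{m-1})b_nb_m+a_{n+1}a_{m-1}(b_nb_m-b_{n+1}b_{m-1}),
\end{align*}
and both terms on the right have nonnegative coefficients.

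The obstacle is that the ratio $\tqbinom{n+j}{k}_q/\tqbinom{k+j}{k}_q$ is not a polynomial, so it has to be rewritten first. I will show by induction on $d$ that $\tqangle{n}{k}^{(d)}_q$ is a determinant of $q$-binomial coefficients, via the $q$-Lindström--Gessel--Viennot lemma or directly as a principal specialization of a Schur function $s_{(k^d)}$ in $n-?$ variables. Concretely, I expect
\begin{align*}
\qangle{n}{k}^{(d)}_q=q^{c(k)}\,s_{(k^{d})}(1,q,\dots,q^{n-k}),
\end{align*}
up to a shift: the rectangle $(k^d)$ with $n-k+1$ parts, a MacMahon-type box count $d\times k\times(n-k)$. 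Indeed $\prod_j\binom{n+j}{k}/\binom{k+j}{k}$ equals the number of plane partitions in a $d\times k\times (n-k)$ box, since at $d=1$ it is $\binom nk$ and at $d=2$ it is the Narayana number. I will verify the exponent $d\binom k2$ against the hook-content formula.

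With this, strong $q$-log-concavity in $k$ (for $n$ fixed) becomes the assertion
\begin{align*}
s_{(k^d)}(\mathbf x_{n-k+1})\,s_{(l^d)}(\mathbf x_{n-l+1})-s_{((k+1)^d)}(\mathbf x_{n-k})\,s_{((l-1)^d)}(\mathbf x_{n-l+2})\ \ge_q 0,
\end{align*}
specialized at $\mathbf x=(1,q,q^2,\dots)$ and including the powers $q^{d\binom{k}{2}}$. The powers are harmless: for $k\ge l$ one has $\binom{k}{2}+\binom{l}{2}\le\binom{k+1}{2}+\binom{l-1}{2}$, so the subtracted term carries the higher power and the inequality has the right direction.

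By symmetry of plane partitions in a box, $(d,k,n-k)$ may be permuted, so I can swap roles and write $\tqangle{n}{k}_q$ as $q^{\ast}s_{(d^{\,k})}$ or $s_{(d^{\,n-k})}$ in $k+1$ variables. Both rectangles then change by adding or removing a row, and the number of variables changes by one in opposite ways. The cleanest way to get Schur positivity is the Lam--Postnikov--Pylyavskyy theorem: $s_{\lambda/\mu}s_{\nu/\rho}\le_s s_{(\lambda\cup\nu)/(\mu\cup\rho)}\,s_{(\lambda\cap\nu)/(\mu\cap\rho)}$, suitably sorted. To handle the varying number of variables, I would instead encode $s_\lambda(1,\dots,q^{N-1})$ for all $N$ through a skew shape, placing the rectangle inside a larger skew shape so that both the shape and the number of variables are encoded. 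Alternatively, I would use Lindström--Gessel--Viennot path families and a path-switching injection à la Butler, Sagan and Krattenthaler.

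The main obstacle is this last step, namely choosing the skew-shape encoding so that the cell-transfer theorem applies and produces exactly the pair with indices $(k+1,l-1)$. I would try first to express $q$-weighted box plane partitions as $d$-tuples of nonintersecting lattice paths and compare the two pairs by Krattenthaler's intersecting-path exchange argument.
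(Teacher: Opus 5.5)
Your proposal has a genuine gap, and it sits where all the content of the theorem lies.

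First, the Schur representation you set up uses the wrong alphabets. The number $s_{(b^a)}(1_N)$ counts plane partitions in an $a\times b\times(N-a)$ box. So $s_{(k^d)}$ would need $n-k+d$ variables; your $n-k+1$ is correct only for $d=1$. Likewise $s_{(d^k)}$ and $s_{(d^{n-k})}$ need $n$ variables, not $k+1$. In fact the hook-content formula gives $\tqangle{n}{k}^{(d)}_q=s_{(d^k)}(1,q,\ldots,q^{n-1})$ exactly: the prefactor $q^{d\binom{k}{2}}$ is precisely $q^{\sum_i(i-1)\lambda_i}$ for $\lambda=(d^k)$. For fixed $n$ the alphabet therefore does not depend on $k$ at all. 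The ``varying number of variables'' you single out as the main obstacle is an artifact of your choice of representation.

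With the correct representation, the theorem is the principal specialization of the Schur positivity $s_{(d^k)}s_{(d^l)}-s_{(d^{k+1})}s_{(d^{l-1})}\ge_s 0$ for $k\ge l\ge 1$. This inequality is the missing ingredient: you only name candidate tools (cell transfer, path switching) and prove nothing. The paper gets it by applying $\omega$ and comparing Littlewood--Richardson coefficients. In a Littlewood--Richardson tableau of shape $\lambda/((k+1)^d)$ and type $((l-1)^d)$, each row $i\le d$ is filled entirely with $i$'s, and $\lambda_{d+1}\le l-1<k+1$. Hence filling column $k+1$ with $1,\ldots,d$ injects these tableaux into those of shape $\lambda/(k^d)$ and type $(l^d)$. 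This gives $c^{\lambda}_{(k^d),(l^d)}\ge c^{\lambda}_{((k+1)^d),((l-1)^d)}$.

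Second, your claim that the $q$-powers are harmless is wrong. Write $a_k=q^{e_k}F_k$, and suppose, as you compute, that the subtracted term carries an extra relative factor $q^{c}$ with $c>0$. Then $a_ka_l-a_{k+1}a_{l-1}=q^{e_k+e_l}\big(F_kF_l-q^{c}F_{k+1}F_{l-1}\big)$. But $F_kF_l\ge_q F_{k+1}F_{l-1}$ does not imply $F_kF_l\ge_q q^{c}F_{k+1}F_{l-1}$: take $F_kF_l=1+q$, $F_{k+1}F_{l-1}=1$, $c=2$. Only a power of $q$ common to both terms can be discarded, as in the paper's fixed-$k$ argument for Theorem~\ref{thm:main4}. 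A relative shift would have to be built into the positivity statement itself. In the representation $s_{(d^k)}(1,q,\ldots,q^{n-1})$ no such shift arises, which is one more reason it is the right one to use.
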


\begin{theorem}\label{thm:main4}
	Let $d\geq 1$ and $k\ge 0$ be fixed. The sequence $\big( \tqangle{n}{k}^{(d)}_q\big)_{n\ge k}$ is strongly $q$-log-concave.
\end{theorem}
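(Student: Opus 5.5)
The plan is to realize $\tqangle{n}{k}^{(d)}_q$, for fixed $k$, as a principal specialization of a Schur function indexed by a rectangle whose width depends on $n$. Strong $q$-log-concavity in $n$ then reduces to a Schur-positivity statement for products of rectangular Schur functions, since principal specialization sends Schur-positive symmetric functions to polynomials with nonnegative coefficients. We must prove
\begin{align*}
\qangle{n}{k}_q\qangle{m}{k}_q \ge_q \qangle{n+1}{k}_q\qangle{m-1}{k}_q \qquad (n\ge m\ge k+1).
\end{align*}
A direct attack is blocked because the denominator $\prod_j \tqbinom{k+j}{k}_q$, although independent of $n$, is not a polynomial, so it cannot simply be cancelled inside $\ge_q$. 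We therefore absorb it into a Schur function.

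\emph{Step 1.} By MacMahon's box formula, $\prod_{j=0}^{d-1}\tqbinom{n+j}{k}_q\big/\tqbinom{k+j}{k}_q$ is the generating function of plane partitions in a $k\times(n-k)\times d$ box. By the hook-content formula this equals $q^{-c}\,s_{((n-k)^k)}(1,q,\dots,q^{k+d-1})$, up to conjugation of the shape, where $c=\sum_i (i-1)\lambda_i = (n-k)\binom{k}{2}$ for the rectangle $\lambda=((n-k)^k)$. Write $a=n-k$ and $b=m-k$, so $a\ge b\ge 1$. The total powers of $q$ on the two sides then agree: the prefactor $q^{d\binom{k}{2}}$ appears twice on each side, and the normalizing exponents give $(a+b)\binom{k}{2}$ on both sides. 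Hence it suffices to show that
\begin{align*}
s_{(a^k)}s_{(b^k)}-s_{((a+1)^k)}s_{((b-1)^k)}
\end{align*}
is Schur positive. Its principal specialization to $k+d$ variables is then a polynomial in $q$ with nonnegative coefficients, because Schur functions with more than $k+d$ rows specialize to $0$ and all others to nonnegative polynomials.

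\emph{Step 2.} For this we use the known multiplicity-free expansion of a product of two rectangles with the same number of rows (Okada; also Stanley's form via Littlewood--Richardson). For $a\ge b$, $s_{(a^k)}s_{(b^k)}=\sum_\nu s_\nu$, where $\nu$ runs over partitions with at most $2k$ parts satisfying $\nu_i+\nu_{2k+1-i}=a+b$ for $1\le i\le k$ and $\nu_k\ge a$ (equivalently $\nu_{k+1}\le b$). Replacing $(a,b)$ by $(a+1,b-1)$ keeps the sum $a+b$ but strengthens the constraint to $\nu_k\ge a+1$. So the index set for $((a+1)^k),((b-1)^k)$ is a subset of that for $(a^k),(b^k)$. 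The difference is the sum of $s_\nu$ over those $\nu$ with $\nu_k=a$, which is manifestly Schur positive.

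\emph{Main obstacle.} The real work is Step 2: getting the rectangle-product rule exactly right, including the orientation of the rectangles (rows versus columns after possible conjugation in Step 1) and the precise inequality on $\nu_k$. If I cannot cite it cleanly, the fallback is to derive it directly from the Littlewood--Richardson rule. Adding a $k$-row rectangle to another $k$-row rectangle forces the LR filling to be determined by the first $k$ rows, and this yields the complementarity condition $\nu_i+\nu_{2k+1-i}=a+b$. An alternative fallback is the Lam--Postnikov--Pylyavskyy cell-transfer theorem applied in the conjugate picture. Step 1 is routine bookkeeping, but I would check the exponent cancellation carefully, since the prefactor $q^{d\binom{k}{2}}$ and the normalizing powers must match exactly on both sides.
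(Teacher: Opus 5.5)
Your proposal is correct, and Step~1 is the paper's reduction. The paper proves $s_{(r^k)}(Q_{d+k})=q^{(r-d)\binom{k}{2}}\tqangle{k+r}{k}_q$ by the hook-content formula, which is the same computation as your MacMahon/hook-content step. It then pulls out the monomial $q^{(2d-a-b)\binom{k}{2}}$ with $a=n-k$, $b=m-k$, and reduces to Schur positivity of $s_{(a^k)}s_{(b^k)}-s_{((a+1)^k)}s_{((b-1)^k)}$. That exponent can be negative. The paper notes this is harmless because the left-hand side is already a polynomial, and you should say so as well.

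The routes differ in the key lemma. The paper's Lemma~\ref{le:Schur-rectangular-1} proves only the coefficientwise inequality $c^{\lambda}_{(a^k),(b^k)}\ge c^{\lambda}_{((a+1)^k),((b-1)^k)}$, via an injection on Littlewood--Richardson tableaux. If the right-hand coefficient is positive, then row $i\le k$ of the tableau is filled entirely with $i$ and $\lambda_{k+1}\le b-1$. One can therefore adjoin the $(a+1)$-th column filled with $1,\dots,k$.

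You instead cite Okada's multiplicity-free rule for the product of two rectangles with the same number of rows, and you have stated it correctly. For $k=1$ it gives $s_{(a)}s_{(b)}-s_{(a+1)}s_{(b-1)}=s_{(a,b)}$, and it checks on, e.g., $s_{(2,2)}^2$. It yields the exact identity that the difference equals $\sum s_\nu$ over $\nu$ with $\nu_i+\nu_{2k+1-i}=a+b$ and $\nu_k=a$.

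Your route buys an explicit, multiplicity-free description of the difference, at the price of importing a considerably deeper theorem. The paper's injection is self-contained, using only the LR rule and Lemma~\ref{le:LR-facts}. If you had to fall back on the LR rule yourself, you would not need the full complementarity rule. The paper's injection shows that the coefficient inequality is all that is required, and it is much easier to establish.
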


\textbf{Outline of the paper.} In Section~\ref{sec:trans}, we study the $d$-Hoggatt transformation and prove Theorem~\ref{thm:main1} under the assumption of Theorem~\ref{thm:main2}. We then collect some preliminary results in Section~\ref{sec:Schur} on the theory of symmetric functions, especially of Schur functions. Building on these results, we demonstrate Theorems~\ref{thm:main3} and \ref{thm:main4} in Section~\ref{sec:q-Hoggatt}. Finally, we move on to the most difficult task in this work, namely, showing the strong $q$-log-convexity of the $d$-Hoggatt polynomials. To begin with, we establish the Schur positivity of an auxiliary family of functions in Section~\ref{sec:Schur-positivity}. Then Theorem~\ref{thm:main2} will be established in Section~\ref{sec:d-Hoggatt}.

In subsequent sections, we always fix $d\ge 1$ and omit the superscript ``$(d)$'' if the value of $d$ does not interfere with our analysis.

\section{$d$-Hoggatt transformation}\label{sec:trans}

In this section, we show that the $d$-Hoggatt transformation preserves the log-convexity. To begin with, we require the \emph{Pochhammer symbols} for $m\in \mathbb{N}\cup\{\infty\}$:
\begin{align*}
	(x)_m := \prod_{k=0}^{m-1} (x+k).
\end{align*}
It is clear from \eqref{eq:d-H-def} that
\begin{align}\label{eq:d-H-ratio}
	\frac{\qangle{n+1}{k}}{\qangle{n}{k}} = \frac{(n+1)_d}{(n-k+1)_d}, \qquad\qquad \frac{\qangle{n-1}{k}}{\qangle{n}{k}} = \frac{(n-k)_d}{(n)_d}.
\end{align}

For $n\geq1$, $0\leq r\leq2n$, and
$0\leq k\leq\lfloor \frac{r}{2}\rfloor$, let us define
\begin{align}
	\alpha_k(n,r) := \begin{cases}
		\qangle{n+1}{k}\qangle{n-1}{r-k} + \qangle{n+1}{r-k}\qangle{n-1}{k} - 2\qangle{n}{k}\qangle{n}{r-k}, & \text{if $k<\frac{r}{2}$},\\[6pt]
		\qangle{n+1}{k}\qangle{n-1}{k} - \qangle{n}{k}^2, & \text{if $k=\frac{r}{2}$}.
	\end{cases}
\end{align}
It is simple to verify that
\begin{align*}
	H_{n-1}(q)H_{n+1}(q) - H_n(q)^2 = \sum_{r=0}^{2n}\sum_{k=0}^{\lfloor \frac{r}{2}\rfloor} \alpha_k(n,r)q^r.
\end{align*}
In view of the $q$-log-convexity of $(H_n(q))_{n\geq0}$ stated in Theorem~\ref{thm:main2},
\begin{align*}
	H_{n-1}(q)H_{n+1}(q) - H_n(q)^2 \in \mathbb{N}[q],
\end{align*}
we know that
\begin{align}\label{eq:alpha-sum>=0}
	\sum_{k=0}^{\lfloor \frac{r}{2}\rfloor} \alpha_k(n,r) \ge 0.
\end{align}
Therefore, if we fix $n\geq1$ and $0\leq r\leq2n$, then it is impossible that $\alpha_k(n,r) < 0$ for every $k$ with $0\leq k\leq\lfloor \frac{r}{2}\rfloor$. In what follows, we characterize the sign change of $\alpha_k(n,r)$.

\begin{lemma}\label{le:k'}
	For given $n\geq1$ and $0\leq r\leq2n$, there exists an integer $k' := k'(n,r)$ such that
	\begin{align*}
		\alpha_k(n,r) \begin{cases}
			\ge 0, & \text{when $0\le k\le k'$},\\
			\le 0, & \text{when $k'< k\le \lfloor \frac{r}{2}\rfloor$}.
		\end{cases}
	\end{align*}
\end{lemma}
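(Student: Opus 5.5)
The plan is to factor out a nonnegative common term from $\alpha_k(n,r)$, so that its sign is governed by a single function of $k$, and then show that this function is nonincreasing on $0\le k\le\lfloor r/2\rfloor$. Then $k'$ is simply the last index at which it is nonnegative.

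\textbf{Step 1 (reduction).} For $k<\frac r2$ with $\qangle{n}{k}\qangle{n}{r-k}>0$, I would use \eqref{eq:d-H-ratio} to write
\begin{align*}
\alpha_k(n,r)=\qangle{n}{k}\qangle{n}{r-k}\,\bigl(c\,g(u,v)-2\bigr),\qquad c:=\frac{(n+1)_d}{(n)_d}=\frac{n+d}{n},
\end{align*}
where $u:=n-k$, $v:=n-r+k$, and
\begin{align*}
g(u,v):=\frac{(v)_d}{(u+1)_d}+\frac{(u)_d}{(v+1)_d}.
\end{align*}
When $k=\frac r2$ the same computation gives $\alpha_k=\qangle{n}{k}^2\bigl(\tfrac c2 g(u,u)-1\bigr)$, which is half of the same bracket. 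Note that $u+v=2n-r$ is independent of $k$, and $u\ge v$ for $k\le r/2$. As $k$ increases by one, $u$ drops by one and $v$ rises by one.

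\textbf{Step 2 (degenerate indices).} I would handle separately the indices where some Hoggatt number vanishes, i.e.\ $r-k>n$ or $r-k>n-1$ (equivalently $v\le 0$).
\begin{itemize}
\item If $r-k\ge n+2$, every term vanishes and $\alpha_k=0$.
\item If $r-k=n+1$, only $\qangle{n+1}{r-k}\qangle{n-1}{k}\ge 0$ survives, so $\alpha_k\ge0$.
\item If $r-k=n$, we get $\alpha_k=\qangle{n+1}{k}\cdot 0+\qangle{n+1}{n}\qangle{n-1}{k}-2\qangle{n}{k}\qangle{n}{n}$, which can be treated directly by the ratios.
\end{itemize}
These indices form an initial segment $k\le r-n$, since they are the small $k$. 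There $\alpha_k\ge 0$, which is consistent with the claimed pattern. One should double-check the case $r-k=n$ explicitly, or observe that it agrees with the formula $c\,g-2$ read with $(0)_d=0$.

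\textbf{Step 3 (monotonicity, the main obstacle).} It remains to show that $k\mapsto c\,g(n-k,n-r+k)-2$ is nonincreasing on the nondegenerate range. Equivalently, I need $g(u+1,v-1)\ge g(u,v)$ whenever $u\ge v\ge 1$ and $u+v$ is fixed; that is, $g$ grows as the pair becomes more unbalanced.

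Set $P(u,v)=\frac{(u)_d}{(v+1)_d}=\prod_{i=0}^{d-1}\frac{u+i}{v+1+i}$, so that $g=P(u,v)+P(v,u)$. I would compare the two summands termwise.
\begin{itemize}
\item $P(u+1,v-1)/P(u,v)=\frac{(u+d)(v)}{u\,(v+d)}\cdot\frac{\cdots}{\cdots}$ telescopes to $\frac{(u+d)(v+d)}{u\,v}$ up to the shifted factors. It is a ratio $>1$.
\item The other summand decreases by the reciprocal-type factor $\frac{v(u+1)}{(v+d)(u+d+1)}$.
\end{itemize}
Writing $g(u+1,v-1)-g(u,v)=P(u,v)(\rho-1)-P(v,u)(1-\sigma)$, the needed inequality becomes $P(u,v)/P(v,u)\ge (1-\sigma)/(\rho-1)$. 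Since $u\ge v$ we have $P(u,v)\ge P(v,u)$, so it would suffice to prove $\rho+\sigma\ge 2$. This is an AM--GM-type statement: the two factors are essentially reciprocal ratios of linear forms, and I expect it to follow from $x+1/x\ge2$ after a short manipulation. If that crude bound fails for small $u-v$, I would instead verify the inequality of polynomials in $u,v,d$ directly, clearing denominators and checking nonnegative coefficients in $u-v$ and $v$.

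Once monotonicity is established, the sign sequence of $\alpha_k(n,r)$ over $0\le k\le\lfloor r/2\rfloor$ is $\ge0$ on an initial segment and $\le0$ afterwards. This gives $k'$. By \eqref{eq:alpha-sum>=0} the segment is nonempty, although that is not needed for the lemma itself.
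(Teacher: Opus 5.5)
Your reduction is the same as the paper's. After factoring out $\qangle{n}{k}\qangle{n}{r-k}$, your $g(n-k,n-r+k)$ is exactly the paper's $\beta_k(n,r)$, and everything rests on $g(u+1,v-1)\ge g(u,v)$ for $u\ge v\ge 1$. Two steps fail as written.

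In Step~2 you assert $\alpha_k\ge 0$ on the whole initial segment $k\le r-n$, and this is false at $k=r-n$. For $d=1$, $n=2$, $r=3$, $k=1$ one gets $\alpha_1(2,3)=0+3-4=-1$. In general $\alpha_{n-1}(n,2n-1)=\qangle{n}{n-1}\,\tfrac{d-n}{n}<0$ whenever $n>d$. The index $k=r-n$ has to go into the monotone regime instead. The factorization is valid there, since $\qangle{n}{n}=1$ and $(0)_d=0$. This is what the paper does: it treats only $k\le r-n-1$ separately and uses $\beta_k$ for all $k\ge r-n$. Your inequality at $v=1$ already covers the step from $k=r-n$ to $k=r-n+1$, so the repair is just to drop the false claim.

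More seriously, Step~3, which is the real content of the lemma, is not proved. Your $\sigma$ is miscomputed: $P(v-1,u+1)/P(v,u)=\frac{(v-1)(u+1)}{(v+d-1)(u+d+1)}$, not $\frac{v(u+1)}{(v+d)(u+d+1)}$. With the correct value one always has $\rho\sigma<1$ (for instance $\sigma=0$ at $v=1$). So the AM--GM route through $x+1/x\ge 2$ cannot work.

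Your reduction to $\rho+\sigma\ge 2$ is still sound, and it closes at once when computed correctly. The quantities $\rho-1=\frac{d(u+v+d)}{uv}$ and $1-\sigma=\frac{d(u+v+d)}{(v+d-1)(u+d+1)}$ have the same numerator, and $uv\le(v+d-1)(u+d+1)$. Equivalently, multiplying out gives the exact identity
\[
g(u+1,v-1)-g(u,v)=d(u+v+d)\Bigl(\frac{(u+1)_{d-1}}{(v)_{d+1}}-\frac{(v)_{d-1}}{(u+1)_{d+1}}\Bigr).
\]
This is the paper's formula for $\beta_k-\beta_{k+1}$, with $u=n-k-1$ and $v=n-r+k+1$. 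It is nonnegative because $(u+1)_{d-1}(u+1)_{d+1}\ge(v)_{d-1}(v)_{d+1}$ when $u+1>v\ge 1$. Without one of these computations, the monotonicity claim, and hence the lemma, is still an expectation rather than a proof.
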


\begin{proof}
	Throughout, we always assume $k\ge 0$. We start by noticing that
	\begin{align*}
		\alpha_k(n,r) \begin{cases}
			= 0, & \text{when $k\le r-n-2$},\\
			> 0, & \text{when $k= r-n-1$}.
		\end{cases}
	\end{align*}
	For the first situation, we use the fact that $\tqangle{n-1}{r-k} = \tqangle{n}{r-k} = \tqangle{n+1}{r-k} = 0$ when $k\le r-n-2$. For the second situation, we note that $k\ne \frac{r}{2}$ and $k\le n-1$ because $r\le 2n$. Thus, $\alpha_{k}(n,r) = \qangle{n-1}{k} > 0$. In the sequel, suppose $k\ge r-n$. Let us introduce an auxiliary sequence indexed by $k$:
	\begin{align*}
		\beta_k(n,r) := \frac{(n-k)_d}{(n-r+k+1)_d} + \frac{(n-r+k)_d}{(n-k+1)_d}.
	\end{align*}
	In light of \eqref{eq:d-H-ratio},
	\begin{align*}
		\frac{n+d}{n} \beta_k(n,r) - 2 = \begin{cases}
			\frac{1}{\qangle{n}{k}\qangle{n}{r-k}}\alpha_k(n,r), & \text{if $k\le \frac{r}{2}$},\\[10pt]
			\frac{2}{\qangle{n}{k}^2}\alpha_k(n,r), & \text{if $k= \frac{r}{2}$}.
		\end{cases}
	\end{align*}
	For consecutive indices $k,k+1$ satisfying $k\ge r-n$ and $k+1\leq\lfloor \frac{r}{2}\rfloor$, we have
	\begin{align*}
		n-k> n-r+k+1\ge 1,
	\end{align*}
	so that
	\begin{align*}
		(n-k)_{d-1}(n-k)_{d+1} \geq (n-r+k+1)_{d-1}(n-r+k+1)_{d+1},
	\end{align*}
	which further gives us the nonnegativity of
	\begin{align*}
		\beta_k(n,r) - \beta_{k+1}(n,r) = d(2n-r+d)
		\left(\frac{(n-k)_{d-1}}{(n-r+k+1)_{d+1}}
		-\frac{(n-r+k+1)_{d-1}}{(n-k)_{d+1}}\right).
	\end{align*}
	Thus, the numbers $\beta_k(n,r)$ are nonincreasing in $k$, and so are $\frac{n+d}{n} \beta_k(n,r) - 2$. We conclude that when $k$ runs from $0$ to $\lfloor \frac{r}{2}\rfloor$, the sign of $\alpha_k(n,r)$ changes at most once, from nonnegative to nonpositive. The claimed characterization then follows.
\end{proof}

Now we are in a position to prove Theorem~\ref{thm:main1}.

\begin{proof}[Proof of Theorem~\ref{thm:main1}]
	Note that
	\begin{align*}
		c_{n-1}c_{n+1} - c_n^2 = \sum_{r=0}^{2n}\sum_{k=0}^{\lfloor \frac{r}{2}\rfloor} \alpha_k(n,r) a_ka_{r-k}.
	\end{align*}
	Since $(a_k)_{k\geq0}$ is a log-convex sequence of nonnegative real numbers, we obtain that
	\begin{align*}
		a_0a_r\geq a_1a_{r-1}\geq \cdots \geq a_{\lfloor \frac{r}{2}\rfloor}a_{r-\lfloor \frac{r}{2}\rfloor}\ge 0.
	\end{align*}
	Now choosing $k'$ as in Lemma~\ref{le:k'}, we see that for every $k$ with $0\leq k\leq\lfloor \frac{r}{2}\rfloor$,
	\begin{align*}
		\alpha_k(n,r) (a_ka_{r-k} - a_{k'}a_{r-k'})\ge 0.
	\end{align*}
	Therefore,
	\begin{align*}
		\sum_{k=0}^{\lfloor \frac{r}{2}\rfloor} \alpha_k(n,r) a_ka_{r-k} = a_{k'}a_{r-k'}\sum_{k=0}^{\lfloor \frac{r}{2}\rfloor} \alpha_k(n,r) + \sum_{k=0}^{\lfloor \frac{r}{2}\rfloor} \alpha_k(n,r) (a_ka_{r-k} - a_{k'}a_{r-k'}) \ge 0,
	\end{align*}
	where we have recalled \eqref{eq:alpha-sum>=0} for the nonnegativity of the first sum. Summing over $r$ gives the desired log-convexity of $c_n$.
\end{proof}

\section{Preliminaries on Schur functions}\label{sec:Schur}

In this section, we give a brief overview of the theory of Schur functions, most of which can be found in the monograph of Stanley~\cite[Chapter~7]{Sta1999}.

We begin with the combinatorial setting. A \emph{partition} is a weakly decreasing sequence $\lambda=(\lambda_1,\lambda_2,\ldots,\lambda_l)$ of positive integers. If the sequence is empty, we say this partition is the \emph{empty partition} $\varnothing$. We write $|\lambda|$ for its \emph{size} $\sum_{i=1}^l \lambda_i$. Also, the \emph{length} of $\lambda$, denoted by $\len(\lambda)$, is defined as $l$, and the \emph{width} is defined as $\lambda_1$. When an index $i$ exceeds the length of $\lambda$, the corresponding part $\lambda_i$ is understood to be $0$. The \emph{Young diagram} of $\lambda$ is obtained by depicting left-aligned boxes in rows such that there are $\lambda_i$ boxes in the $i$-th row. By flipping the Young diagram along the main diagonal, we obtain the Young diagram of a new partition, which is called the \emph{conjugate} of $\lambda$, denoted by $\lambda^\trans$. For two partitions $\mu$ and $\lambda$, the notation $\mu \subseteq \lambda$ means $\mu_i\le \lambda_i$ for every index $i$. The \emph{skew diagram} $\lambda/\mu$ is obtained by removing the Young diagram of $\mu$ from the upper-left corner of the diagram of $\lambda$. We also write $(\lambda/\mu)^\trans := \lambda^\trans/\mu^\trans$.

Assuming $\mu \subseteq \lambda$, a \emph{semistandard Young tableau} of shape $\lambda/\mu$ is a filling $T$ of the cells in the skew diagram of $\lambda/\mu$ by positive integers that is weakly increasing from left to right in every row and strictly increasing from top to bottom in every column. Its \emph{type} is the sequence $(m_1,m_2,\ldots)$, where $m_i$ denotes the \emph{multiplicity} of $i$ in the filling $T$. Let $X:=(x_1,x_2,\ldots)$ be a list of variables, known as an \emph{alphabet}, and put
\begin{align*}
	X^T := \prod_{i\ge 1} x_i^{m_i}.
\end{align*}
The combinatorial definition of the \emph{skew Schur function} indexed by $\lambda/\mu$ is \cite[p.~310, Definition~7.10.1]{Sta1999}:
\begin{align*}
	s_{\lambda/\mu} = s_{\lambda/\mu}(X) := \sum_{T} X^T,
\end{align*}
summing over all semistandard Young tableaux of shape $\lambda/\mu$. For $\mu=\varnothing$, we have the ordinary \emph{Schur function} $s_\lambda = s_\lambda(X)$. We also use the convention $s_{\lambda/\mu}=0$ if $\mu\not\subseteq \lambda$.

Schur functions play a central role in the theory of symmetric functions. Let $\Lambda$ be the ring of symmetric functions over $\mathbb{Z}$. An important fact is that the Schur functions $s_\lambda$ form a \emph{basis} for $\Lambda$; see \cite[p.~315, Corollary~7.10.6]{Sta1999}. Moreover, this basis is \emph{orthogonal}, given by the \emph{Hall inner product}~\cite[p.~336]{Sta1999}:
\begin{align*}
	\bigangle{s_\lambda, s_\mu} = \begin{cases}
		1, & \text{if $\lambda =\mu$},\\
		0, & \text{if $\lambda \ne\mu$}.
	\end{cases}
\end{align*}
Let $F = \sum_{\lambda} a_\lambda s_\lambda \in \Lambda$ be arbitrary, expanded by the Schur basis. We write
\begin{align*}
	[s_\lambda] F := a_\lambda = \bigangle{s_\lambda, F}.
\end{align*}
If $[s_\lambda] F \ge 0$ for every partition $\lambda$, then we say $F$ is \emph{Schur positive}, expressed by $F\ge_s 0$.

We know from \cite[p.~338, eq.~(7.64)]{Sta1999} that
\begin{align}
	s_\mu s_\nu &= \sum_{\lambda} c_{\mu,\nu}^{\lambda} s_\lambda,\label{eq:Schur-c-1}\\
	s_{\lambda/\nu} &= \sum_{\mu} c_{\mu,\nu}^{\lambda} s_\mu,\label{eq:Schur-c-2}\\
	s_{\lambda/\mu} &= \sum_{\nu} c_{\mu,\nu}^{\lambda} s_\nu,\label{eq:Schur-c-3}
\end{align}
using the same \emph{Littlewood--Richardson coefficients} $c_{\mu,\nu}^{\lambda}$. In particular, for arbitrary $F\in \Lambda$,
\begin{align}\label{eq:Hall-F}
	\bigangle{s_{\lambda/\mu}, F} = \bigangle{s_\lambda, s_\mu F},
\end{align}
both equal to $\sum_{\nu} \big(c_{\mu,\nu}^{\lambda}\cdot [s_\nu] F\big)$.

To further understand the Littlewood--Richardson coefficients, we require the Littlewood--Richardson rule. We first obtain the \emph{reverse row word} of a tableau by reading each row from right to left, beginning with the top row and then proceeding downward. Moreover, a word is a \emph{lattice word} if in every initial segment, the multiplicity of $i$ is at least the multiplicity of $i+1$ for every $i\ge 1$. Now a \emph{Littlewood--Richardson tableau} is a semistandard Young tableau whose reverse row word is a lattice word. It follows from this definition that the type of a Littlewood--Richardson tableau forms a partition.

The \emph{Littlewood--Richardson rule}~\cite[p.~432, Theorem~A1.3.3]{Sta1999} states that $c_{\mu,\nu}^{\lambda}$ equals the number of Littlewood--Richardson tableaux of shape $\lambda/\mu$ and type $\nu$, yielding that $c_{\mu,\nu}^{\lambda}\ge 0$, and hence that
\begin{enumerate}[label={\textup{(\arabic{*})}}, leftmargin=*, labelsep=5pt, align=left, itemsep=2pt, topsep=2pt]
	\item Every skew Schur function is Schur positive;
	
	\item A product of Schur positive symmetric functions is
	Schur positive.
\end{enumerate}

We collect some additional facts about the Littlewood--Richardson tableaux and coefficients.

\begin{lemma}\label{le:LR-facts}
	Every entry in the $i$-th row of a Littlewood--Richardson tableau is no larger than $i$. Also, if $c_{\mu,\nu}^{\lambda}>0$, then $\mu,\nu \subseteq \lambda$ and for every $i\ge 1$,
	\begin{align}\label{eq:LR-facts-ineq}
		\lambda_i \le \mu_1 + \nu_i.
	\end{align}
\end{lemma}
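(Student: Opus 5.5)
The statement has three parts, and I will prove them in order, using only the Littlewood--Richardson rule as recalled above.

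\emph{First claim (entries bounded by row index).} I will argue by induction on the row index $i$, by looking at the first (rightmost) entry of row $i$ in the reverse row word. Let $T$ be a Littlewood--Richardson tableau, and suppose some entry of row $i$ exceeds $i$. Since rows weakly increase, the rightmost entry $m$ of row $i$ also satisfies $m>i$, and it is the first letter of row $i$ read in the reverse row word. By induction, rows $1,\dots,i-1$ contain only entries at most $i-1$. Hence, just before reading $m$, the initial segment contains no $m-1\ge i$, while it now contains one $m$. This violates the lattice condition for the pair $(m-1,m)$.

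\emph{Second claim, containment.} The inclusion $\mu\subseteq\lambda$ is immediate: a tableau of shape $\lambda/\mu$ exists only if $\mu\subseteq\lambda$, since the paper's convention sets $s_{\lambda/\mu}=0$ otherwise. For $\nu\subseteq\lambda$ I have two options.
\begin{itemize}
\item Use the symmetry $c^\lambda_{\mu,\nu}=c^\lambda_{\nu,\mu}$, which follows from $s_\mu s_\nu=s_\nu s_\mu$ and \eqref{eq:Schur-c-1}, and then apply the same argument with the roles of $\mu$ and $\nu$ swapped.
\item Argue directly. The entries equal to $j$ form a horizontal strip, and by column strictness together with the first claim, every entry $j$ sits in a row $\ge j$. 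Moreover, column strictness forces the cells of $\lambda/\mu$ containing entries $\le j$ in rows $\ge j$ to lie in distinct columns in a staircase fashion. Combining the two gives $\nu_j\le\lambda_j$.
\end{itemize}
The symmetry route is cleaner, so I will use it.

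\emph{Second claim, inequality \eqref{eq:LR-facts-ineq}.} This is the main obstacle. The goal is to show that row $i$ of $\lambda/\mu$, which has $\lambda_i-\mu_i$ cells, cannot be too long relative to $\nu_i$. I plan to apply symmetry again and instead prove $\lambda_i\le\nu_1+\mu_i$ for a tableau of shape $\lambda/\nu$ and type $\mu$, after swapping the roles of $\mu$ and $\nu$. Since $\mu_i\le\mu_1$, however, the bound actually needed is $\lambda_i\le\mu_1+\nu_i$, so I will work directly with a tableau $T$ of shape $\lambda/\mu$ and type $\nu$.

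Among the cells of row $i$, the ones in columns $>\mu_1$ lie in columns where $\mu$ is empty. So the portion of $\lambda$ in columns $\mu_1+1,\dots,\lambda_i$ is a straight-shape region, and every column there is filled by $T$ from row $1$ downward. In such a column, strict increase together with the first claim forces the entry in row $r$ to be exactly $r$: it is at least $r$ by column strictness starting from row $1$, and at most $r$ by the first claim. Therefore row $i$ contains at least $\lambda_i-\mu_1$ entries equal to $i$, which gives $\nu_i\ge\lambda_i-\mu_1$, and the inequality follows.

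With this argument in hand, I will also derive $\nu\subseteq\lambda$ directly in the same style, taking that route if it turns out cleaner than the symmetry argument.
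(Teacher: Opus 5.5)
Your proposal is correct and follows the paper's proof step for step. The three parts match: induction on rows using the lattice condition at the rightmost entry of row $i$, the symmetry $c^\lambda_{\mu,\nu}=c^\lambda_{\nu,\mu}$ to get $\nu\subseteq\lambda$, and the observation that in each column $\mu_1+1,\dots,\lambda_i$ the entry in row $i$ is forced to equal $i$. The detour through the swapped inequality $\lambda_i\le\nu_1+\mu_i$ is equivalent to the target by relabeling and adds nothing, so you can drop it.
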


\begin{proof}
	We prove the first assertion inductively on $i$. For the first row, the rightmost cell must be filled with $1$ by the Littlewood--Richardson condition, and hence all entries in this row are $1$ by weak monotonicity. Suppose all entries in rows above the $i$-th row are at most $i-1$. Then the rightmost entry in the $i$-th row cannot exceed $i$; otherwise, the Littlewood--Richardson condition is violated at this cell. We then conclude that all entries in this row are at most $i$ by weak monotonicity.
	
	For the second assertion, since $c_{\mu,\nu}^{\lambda}>0$, we have at least one Littlewood--Richardson tableau of shape $\lambda/\mu$ and type $\nu$. Then $\mu\subseteq \lambda$ from the shape. Meanwhile, in \eqref{eq:Schur-c-1}, switching $\mu$ and $\nu$ does not change the left-hand side but turns $c_{\mu,\nu}^{\lambda}$ to $c_{\nu,\mu}^{\lambda}$ on the right-hand side. Thus,
	\begin{align*}
		c_{\mu,\nu}^{\lambda} = c_{\nu,\mu}^{\lambda}.
	\end{align*}
	Then $\nu \subseteq \lambda$ is also true. Next, if $\lambda_i\le \mu_1$, then \eqref{eq:LR-facts-ineq} is immediate. If $\lambda_i> \mu_1$, then in the $\lambda_i-\mu_1$ columns indexed from $\mu_1+1$ to $\lambda_i$, there are skew cells in all rows from the first to the $i$-th. In each of these columns, column strictness makes the entry in the $i$-th row at least $i$, while the first part of the lemma makes it at most $i$, so this entry has to be $i$. Then there are at least $\lambda_i-\mu_1$ cells filled with $i$, thereby yielding \eqref{eq:LR-facts-ineq}.
\end{proof}

Define the \emph{rectangular partition}
\begin{align*}
	(m^l) := (\underbrace{m,\ldots,m}_{\text{$l$ parts}}).
\end{align*}
The \emph{complete homogeneous symmetric functions} $h_r$ and the \emph{elementary symmetric functions} $e_r$, defined respectively by
\begin{align}\label{eq:h/e-def}
	h_r := s_{(r)}, \qquad\qquad e_r := s_{(1^r)},
\end{align}
are the most basic Schur functions indexed by rectangular partitions. Here we adopt the convention that $h_0=e_0=1$ and $h_r=e_r=0$ for $r<0$.

We also need a special shape of skew diagrams for later use. A skew diagram $\lambda/\mu$ is a \emph{horizontal strip} if it has at most one cell in each column. We write
\begin{align*}
	\text{``$\lambda/\mu \HS$''}
\end{align*}
for this condition. In particular, this condition is equivalent to
\begin{align}\label{eq:HS-condition}
	\lambda_1\ge \mu_1\ge \lambda_2\ge \mu_2 \ge \lambda_3 \ge \cdots.
\end{align}
According to the product form of the \emph{Pieri rule}~\cite[p.~339, Theorem~7.15.7]{Sta1999},
\begin{align}\label{eq:Pieri-product}
	h_r s_\mu = \sum_{\substack{\lambda\colon \lambda/\mu\HS\\|\lambda|-|\mu|=r}} s_\lambda.
\end{align}
Also, the skew form of the Pieri rule~\cite[p.~340, Corollary~7.15.9]{Sta1999} states that
\begin{align}\label{eq:Pieri-skew}
	s_{\lambda/(r)} = \sum_{\substack{\mu\colon \lambda/\mu\HS\\|\lambda|-|\mu|=r}} s_\mu.
\end{align}

Now we move on to the Schur positivity related to rectangular partitions.

\begin{lemma}\label{le:Schur-rectangular-1}
	Let $u\ge 1$ and $a\ge b\ge 1$. Then
	\begin{align}\label{eq:Schur-rectangular-1}
		s_{(a^u)}s_{(b^u)} - s_{((a+1)^u)}s_{((b-1)^u)} \ge_s 0.
	\end{align}
\end{lemma}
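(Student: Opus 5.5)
The plan is to compute both products explicitly in the Schur basis and observe that the second one is a sub-sum of the first. The tool is the classical rule for multiplying two rectangles with the same number of rows (due to Okada; it can also be derived from the Littlewood--Richardson rule). For $a\ge b\ge 0$ it reads
\begin{align*}
	s_{(a^u)}s_{(b^u)} = \sum_{\lambda} s_\lambda,
\end{align*}
where $\lambda$ runs over all partitions with $\len(\lambda)\le 2u$ satisfying $\lambda_i+\lambda_{2u+1-i}=a+b$ for $1\le i\le u$ and $\lambda_u\ge a$. Equivalently, $\lambda$ is determined by a weakly decreasing sequence $a+b\ge\lambda_1\ge\cdots\ge\lambda_u\ge a$, and every coefficient is $0$ or $1$.

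Granting this rule, the lemma follows at once. The pair $(a+1,b-1)$ still satisfies $a+1\ge b-1\ge 0$ and has the same sum $a+b$. So $s_{((a+1)^u)}s_{((b-1)^u)}$ is the sum of $s_\lambda$ over the same family of $\lambda$, except that the constraint $\lambda_u\ge a$ is replaced by the stronger one $\lambda_u\ge a+1$. Hence the index set of the second product is contained in that of the first. The difference in \eqref{eq:Schur-rectangular-1} is therefore exactly $\sum s_\lambda$ over those $\lambda$ in the family with $\lambda_u=a$, which is Schur positive.

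To keep the argument self-contained, I would derive the rectangle rule from the Littlewood--Richardson rule with $\mu=(a^u)$ and $\nu=(b^u)$.
\begin{enumerate}[label={\textup{(\arabic{*})}}, leftmargin=*, labelsep=5pt, align=left, itemsep=2pt, topsep=2pt]
	\item By Lemma~\ref{le:LR-facts}, any $\lambda$ with $c_{\mu,\nu}^\lambda>0$ satisfies $\mu,\nu\subseteq\lambda$ and $\lambda_i\le a+b$.
	\item Since $\nu$ has only $u$ parts, the entries are at most $u$. Column strictness then forces $\len(\lambda)\le 2u$: the columns of $\lambda/\mu$ below row $u$ contain strictly increasing entries from $\{1,\dots,u\}$.
	\item I would describe the Littlewood--Richardson tableau row by row. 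The first $u$ rows of the skew shape have lengths $\lambda_i-a$. The rows below row $u$ have full lengths $\lambda_{u+j}$. The lattice and column conditions should force a unique filling: rows $u+1,\dots,2u$ are complementary to the top rows in the sense that $\lambda_{u+j}=a+b-\lambda_{u+1-j}$. This yields both the complementarity relations and the multiplicity-freeness.
\end{enumerate}
Alternatively, I would apply the $\omega$-involution or the rotation/complement symmetry of rectangles to reduce to a known computation.

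The main obstacle is this last step. One must show that the lattice-word condition pins down the filling uniquely and forces $\lambda_i+\lambda_{2u+1-i}=a+b$ exactly, rather than merely an inequality. My first attempt would be induction on $u$. Stripping the cells containing $u$ should reduce the problem to the products $s_{(a^{u-1})}s_{(b^{u-1})}$ or similar, with one row fewer, where the complementary-pair structure is preserved. If that becomes messy, I would simply cite Okada's theorem on products of rectangular Schur functions, since only the inclusion of index sets is needed for the lemma.
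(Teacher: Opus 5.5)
Your argument is correct, but it takes a different route from the paper's. The paper never expands either product. It proves $c^{\lambda}_{(a^u),(b^u)}\ge c^{\lambda}_{((a+1)^u),((b-1)^u)}$ coefficientwise by an injection on Littlewood--Richardson tableaux. By Lemma~\ref{le:LR-facts} and column strictness, in a tableau of shape $\lambda/((a+1)^u)$ and type $((b-1)^u)$, row $i\le u$ consists only of $i$'s. The lattice condition then gives $\lambda_{u+1}\le b-1<a+1$. So column $a+1$ of $\lambda/(a^u)$ has exactly $u$ cells, and filling it with $1,\dots,u$ produces a tableau of shape $\lambda/(a^u)$ and type $(b^u)$.

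You instead invoke Okada's multiplicity-free rule for two rectangles with the same number of rows. You state it correctly, and it also covers the degenerate case $b-1=0$, where it returns $s_{((a+1)^u)}$. The containment of index sets then finishes the proof. Your route yields more: the difference is exactly the multiplicity-free sum of $s_\lambda$ over the complementary family with $\lambda_u=a$. The cost is that it rests on a nontrivial external theorem, whereas the paper's injection uses only the Littlewood--Richardson rule.

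Your step (3) is only a heuristic as written ("should force"), so the proof as proposed depends on citing Okada. If you want it self-contained, no induction on $u$ is needed:
\begin{enumerate}
\item Force rows $1,\dots,u$ exactly as the paper does.
\item By \eqref{eq:LR-facts-ineq}, $\lambda_{u+1}\le a$. So the remaining entries form a semistandard filling of the straight shape $\nu=(\lambda_{u+1},\lambda_{u+2},\dots)$ with entries in $\{1,\dots,u\}$, whose reverse row word is lattice relative to $\kappa=(\lambda_1-a,\dots,\lambda_u-a)$.
\item By the form of the Littlewood--Richardson rule with the lattice condition taken relative to $\kappa$, these fillings are counted by $c^{(b^u)}_{\kappa,\nu}$.
\item By the rotation identity $s_{(b^u)/\kappa}=s_{(b-\kappa_u,\dots,b-\kappa_1)}$ (compare Lemma~\ref{le:rotation}), this coefficient is $1$ if $\nu=(b-\kappa_u,\dots,b-\kappa_1)$ and $0$ otherwise.
\end{enumerate}
The case $\nu=(b-\kappa_u,\dots,b-\kappa_1)$ is exactly $\lambda_i+\lambda_{2u+1-i}=a+b$ for $1\le i\le u$ with $\len(\lambda)\le 2u$, which is Okada's rule.
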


\begin{proof}
	By \eqref{eq:Schur-c-1},
	\begin{align*}
		s_{(a^u)}s_{(b^u)} - s_{((a+1)^u)}s_{((b-1)^u)} = \sum_{\lambda} \big(c_{(a^u),(b^u)}^{\lambda}-c_{((a+1)^u),((b-1)^u)}^{\lambda}\big)s_\lambda.
	\end{align*}
	It is sufficient to show that for every partition $\lambda$,
	\begin{align}\label{eq:c-diff}
		c_{(a^u),(b^u)}^{\lambda}-c_{((a+1)^u),((b-1)^u)}^{\lambda} \ge 0.
	\end{align}
	If $((a+1)^u)\not\subseteq \lambda$, then $c_{((a+1)^u),((b-1)^u)}^{\lambda}=0$ according to the Littlewood--Richardson rule, and hence \eqref{eq:c-diff} holds. Now assume that $((a+1)^u)\subseteq \lambda$ so that $\lambda_i\ge a+1$ for every index $i$ with $1\le i\le u$. If $c_{((a+1)^u),((b-1)^u)}^{\lambda}=0$, then we still have nothing to prove. Hence, we further assume that $c_{((a+1)^u),((b-1)^u)}^{\lambda}>0$. We claim that $\lambda_{u+1}< a+1$. Note that the Littlewood--Richardson rule ensures the existence of a Littlewood--Richardson tableau of shape $\lambda/((a+1)^u)$ and type $((b-1)^u)$. By column strictness and the first assertion in Lemma~\ref{le:LR-facts}, we know that all cells in the $i$-th row of $\lambda/((a+1)^u)$ are filled with $i$. Also, \eqref{eq:LR-facts-ineq} in Lemma~\ref{le:LR-facts} tells us that $\lambda_i\le (a+1)+(b-1)=a+b$. Next, by the Littlewood--Richardson condition, we may fill the $(u+1)$-th row of $\lambda/((a+1)^u)$ with $u$ of multiplicity at most $\lambda_{u-1} - \lambda_{u}$, preceded by $u-1$ of multiplicity at most $\lambda_{u-2} - \lambda_{u-1}$, and so forth, till $1$ of multiplicity $\lambda_0-\lambda_1$ where $\lambda_0 = a+b$. Thus,
	\begin{align*}
		\lambda_{u+1}\le \lambda_0 - \lambda_u\le (a+b)-(a+1) = b-1 < a+1
	\end{align*}
	since $b\le a$. Now for any Littlewood--Richardson tableau of shape $\lambda/((a+1)^u)$ and type $((b-1)^u)$, we can injectively obtain a Littlewood--Richardson tableau of shape $\lambda/(a^u)$ and type $(b^u)$ by preserving the fillings of the former tableau and then filling the $(a+1)$-th column of $\lambda/(a^u)$, which contains exactly $u$ cells, with $1,2,\ldots,u$, from top to bottom. Therefore, \eqref{eq:c-diff} is also valid in this situation.
\end{proof}

Let $\omega$ be the \emph{standard involution} on $\Lambda$, defined by
\begin{align*}
	\omega(s_\lambda) := s_{\lambda^\trans},
\end{align*}
which preserves the Schur positivity. Since $\omega$ is a ring automorphism, it is true that for any $F,G\in \Lambda$,
\begin{align}\label{eq:omega-product}
	\omega(FG) = \omega(F)\omega(G).
\end{align}
Also, we know from \cite[p.~338, Theorem~7.15.6]{Sta1999} that
\begin{align}\label{eq:omega-skew}
	\omega(s_{\lambda/\mu}) = s_{\lambda^\trans/\mu^\trans}.
\end{align}

We then arrive at a companion to Lemma~\ref{le:Schur-rectangular-1}, which also appeared in \cite[Remark~7.2]{BM2004}.

\begin{lemma}\label{le:Schur-rectangular-2}
	Let $u\ge 1$ and $a\ge b\ge 1$. Then
	\begin{align}\label{eq:Schur-rectangular-2}
		s_{(u^a)}s_{(u^b)} - s_{(u^{a+1})}s_{(u^{b-1})} \ge_s 0.
	\end{align}
\end{lemma}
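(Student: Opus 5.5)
The plan is to deduce Lemma~\ref{le:Schur-rectangular-2} from Lemma~\ref{le:Schur-rectangular-1} by applying the standard involution $\omega$. The conjugate of the rectangle $(a^u)$ is $(u^a)$. Likewise $(b^u)^\trans=(u^b)$, $((a+1)^u)^\trans=(u^{a+1})$, and $((b-1)^u)^\trans=(u^{b-1})$; when $b=1$ both sides of the last identity are the empty partition, with $s_\varnothing=1$.

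First, by the definition $\omega(s_\lambda)=s_{\lambda^\trans}$, we have $\omega(s_{(a^u)})=s_{(u^a)}$, and similarly for the other three rectangles. Second, since $\omega$ is a ring automorphism, \eqref{eq:omega-product} shows that it is linear and multiplicative. Applying it to the left-hand side of \eqref{eq:Schur-rectangular-1} gives
\begin{align*}
\omega\big(s_{(a^u)}s_{(b^u)} - s_{((a+1)^u)}s_{((b-1)^u)}\big) = s_{(u^a)}s_{(u^b)} - s_{(u^{a+1})}s_{(u^{b-1})}.
\end{align*}

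Third, write the Schur-positive function from Lemma~\ref{le:Schur-rectangular-1} as $F=\sum_\lambda a_\lambda s_\lambda$ with all $a_\lambda\ge 0$. Then $\omega(F)=\sum_\lambda a_\lambda s_{\lambda^\trans}$. Since conjugation is a bijection on partitions, the coefficient of $s_{\lambda^\trans}$ in $\omega(F)$ is $a_\lambda\ge0$, so $\omega(F)\ge_s 0$. This gives \eqref{eq:Schur-rectangular-2} under the same hypotheses $u\ge1$, $a\ge b\ge1$.

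There is no real obstacle here. The only point needing care is the boundary case $b=1$, where the relevant rectangle is empty. There, $\omega(1)=1$ is consistent with $s_\varnothing=1$, so the argument goes through unchanged.
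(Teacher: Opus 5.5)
Your proposal is correct and follows the paper's proof: apply $\omega$ to the inequality of Lemma~\ref{le:Schur-rectangular-1}, using the multiplicativity \eqref{eq:omega-product}, the rectangle conjugations such as $(a^u)^\trans=(u^a)$, and the fact that $\omega$ preserves Schur positivity. The paper only cites that last fact, whereas you verify it directly from the bijectivity of conjugation, and you also check the case $b=1$ explicitly.
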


\begin{proof}
	Applying $\omega$ to the left-hand side of \eqref{eq:Schur-rectangular-1} produces the left-hand side of \eqref{eq:Schur-rectangular-2}, where we have used \eqref{eq:omega-product}. Since $\omega$ preserves the Schur positivity, the desired statement follows from Lemma~\ref{le:Schur-rectangular-1}.
\end{proof}

The preceding content works for generic alphabets $X=(x_1,x_2,\ldots)$. Now let $X_N := (x_1,\ldots,x_N)$ be finite. The \emph{specialization map} $\Sp_N$ sets all variables after $x_N$ to $0$, so that
\begin{align*}
	\Sp_N(F) := F(X_N).
\end{align*}
We introduce a companion operator that will be frequently utilized:
\begin{align*}
	\Spomega_N(F) := \Sp_N\big(\omega(F)\big).
\end{align*}
In particular, \eqref{eq:omega-product} tells us that for any $F,G\in \Lambda$,
\begin{align}\label{eq:spt-product}
	\Spomega_N(FG) = \Spomega_N(F)\Spomega_N(G).
\end{align}
Finally, we define the \emph{projection} onto Schur functions of width at most $N$ by
\begin{align*}
	\pi_N\left(\sum_{\lambda} a_\lambda s_\lambda\right) := \sum_{\lambda\colon \lambda_1\le N} a_\lambda s_\lambda.
\end{align*}

Let
\begin{align*}
	I_N := \operatorname{span}\{s_\lambda\colon \lambda_1>N\}.
\end{align*}

\begin{lemma}\label{le:I-ideal}
	The span $I_N$ is an ideal of $\Lambda$.
\end{lemma}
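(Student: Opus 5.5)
We need to show that $I_N$ is closed under addition, which is clear since it is a span, and under multiplication by arbitrary elements of $\Lambda$. Because the Schur functions form a $\mathbb{Z}$-basis of $\Lambda$ and multiplication is bilinear, it suffices to check a single case. For every partition $\mu$ with $\mu_1>N$ and every partition $\nu$, we must show $s_\mu s_\nu\in I_N$.

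The plan is to expand the product via \eqref{eq:Schur-c-1}:
\begin{align*}
	s_\mu s_\nu=\sum_\lambda c_{\mu,\nu}^\lambda s_\lambda .
\end{align*}
It is then enough to show that every $\lambda$ with $c_{\mu,\nu}^\lambda>0$ satisfies $\lambda_1>N$. By Lemma~\ref{le:LR-facts}, $c_{\mu,\nu}^\lambda>0$ forces $\mu\subseteq\lambda$. Hence $\lambda_1\ge\mu_1>N$, so only basis elements lying in $I_N$ survive. Therefore $s_\mu s_\nu\in I_N$, and by linearity $FG\in I_N$ for all $F\in I_N$ and $G\in\Lambda$.

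No real obstacle arises. The only point to state carefully is the reduction to basis elements, which uses that both $I_N$ and $\Lambda$ are spanned over $\mathbb{Z}$ by Schur functions.

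As an alternative viewpoint, $I_N$ is the kernel of $\Spomega_N$. Indeed, $\omega$ sends $s_\lambda$ to $s_{\lambda^\trans}$, and $s_{\lambda^\trans}(X_N)$ vanishes exactly when $\len(\lambda^\trans)=\lambda_1>N$. By \eqref{eq:spt-product}, $\Spomega_N$ is a ring homomorphism, so its kernel is an ideal. However, identifying this kernel requires the linear independence of the nonzero $s_\lambda(X_N)$, so the Littlewood--Richardson argument above is the more self-contained route.
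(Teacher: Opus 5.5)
Your proof is correct and is essentially the paper's argument: expand $s_\mu s_\nu=\sum_\lambda c_{\mu,\nu}^\lambda s_\lambda$ and use the containment $\mu\subseteq\lambda$ from Lemma~\ref{le:LR-facts} to get $\lambda_1\ge\mu_1>N$ whenever $c_{\mu,\nu}^\lambda>0$. Your write-up also states the threshold correctly as $N$, whereas the paper's text has the typo ``$>1$'' in that step.
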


\begin{proof}
	Suppose $s_\lambda$ occurs in the Schur expansion of $s_\mu s_\nu$ so that $c_{\mu,\nu}^{\lambda} > 0$. According to Lemma~\ref{le:LR-facts}, $\mu,\nu\subseteq \lambda$. Thus, if $\mu_1>1$ or $\nu_1>1$, we must have $\lambda_1>1$. This means that multiplication of an element of $I_n$ and an arbitrary symmetric function in $\Lambda$ remains in $I_N$, thereby confirming that $I_N$ is an ideal.
\end{proof}

\begin{lemma}\label{le:anni}
	Both $\Spomega_N$ and $\pi_N$ annihilate all functions in $I_N$.
\end{lemma}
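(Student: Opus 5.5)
Both $\Spomega_N$ and $\pi_N$ are linear, so it suffices to show that each of them kills every basis element $s_\lambda$ of $I_N$, that is, every $s_\lambda$ with $\lambda_1>N$. The two checks are separate; the one for $\pi_N$ is immediate.

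For $\pi_N$, the claim is the definition itself. The projection keeps exactly the coefficients $a_\lambda$ with $\lambda_1\le N$ and discards the others. So for $\lambda_1>N$ we get $\pi_N(s_\lambda)=0$, and linearity gives $\pi_N(I_N)=0$.

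For $\Spomega_N$, we first have $\omega(s_\lambda)=s_{\lambda^\trans}$. The length of $\lambda^\trans$ is $\lambda_1$, so $\len(\lambda^\trans)>N$. It remains to check that $s_\mu(X_N)=0$ whenever $\len(\mu)>N$. We use the combinatorial definition of $s_\mu$ as a sum of $X^T$ over semistandard Young tableaux of shape $\mu$:
\begin{enumerate}[label={\textup{(\arabic{*})}}, leftmargin=*, labelsep=5pt, align=left, itemsep=2pt, topsep=2pt]
	\item The first column of $\mu$ has $\len(\mu)\ge N+1$ cells, and its entries strictly increase downward.
	\item Since the entries are positive integers, the entry in row $N+1$ is at least $N+1$.
	\item So every $X^T$ contains some variable $x_j$ with $j>N$, and $\Sp_N$ sends it to $0$.
\end{enumerate}
Hence $\Spomega_N(s_\lambda)=\Sp_N(s_{\lambda^\trans})=0$, and linearity gives $\Spomega_N(I_N)=0$.

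There is no real obstacle in this argument. The only point needing care is the vanishing of a Schur polynomial in $N$ variables when the shape has more than $N$ rows, which follows from column strictness as above.
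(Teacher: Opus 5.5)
Your proof is correct and follows essentially the same route as the paper. The projection $\pi_N$ kills $I_N$ by definition, and $\Spomega_N(s_\lambda)=s_{\lambda^\trans}(X_N)$ vanishes because the first column of $\lambda^\trans$ has $\lambda_1>N$ cells, so column strictness forces an entry larger than $N$; you spell out this last step in more detail than the paper's one-line remark.
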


\begin{proof}
	The annihilation of $\pi_N$ follows by definition. Next, $\Spomega_N(s_\lambda) = s_{\lambda^\trans}(X_N)$, which vanishes when $\len(\lambda^\trans)>N$ since in this case the first column already requires more than $N$ distinct variables. In this case, we have $\lambda_1 > N$, yielding the annihilation of $\Spomega_N$ on $I_N$.
\end{proof}

\begin{lemma}\label{le:spt-proj}
	For any $F\in \Lambda$,
	\begin{align}\label{eq:spt-proj}
		\Spomega_N(F) = \Spomega_N\big(\pi_N(F)\big).
	\end{align}
	Also, $\Spomega_N$ is injective on the span of $s_\lambda$ with $\lambda_1\le N$.
\end{lemma}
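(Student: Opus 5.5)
Write $F=\pi_N(F)+\big(F-\pi_N(F)\big)$. By the definition of $\pi_N$, the difference $F-\pi_N(F)=\sum_{\lambda\colon\lambda_1>N}a_\lambda s_\lambda$ lies in $I_N$. Since $\Spomega_N=\Sp_N\circ\omega$ is linear, Lemma~\ref{le:anni} gives $\Spomega_N\big(F-\pi_N(F)\big)=0$, and \eqref{eq:spt-proj} follows at once. Lemma~\ref{le:I-ideal} is not needed here; only linearity and the annihilation statement are used.

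For injectivity, take $G=\sum_{\lambda\colon\lambda_1\le N}a_\lambda s_\lambda$ with $\Spomega_N(G)=0$. We have $\Spomega_N(G)=\sum_{\lambda}a_\lambda s_{\lambda^\trans}(X_N)$, and here $\len(\lambda^\trans)=\lambda_1\le N$ for every $\lambda$ in the sum. It therefore suffices to know that the polynomials $s_\mu(X_N)$, for partitions $\mu$ with $\len(\mu)\le N$, are linearly independent. This is the standard fact that these Schur polynomials form a basis of the ring of symmetric polynomials in $N$ variables.

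To keep the argument self-contained, I would prove this by a triangularity argument. The monomial $x^\mu=x_1^{\mu_1}\cdots x_N^{\mu_N}$ appears in $s_\mu(X_N)$ with coefficient $1$, coming from the tableau whose $i$-th row is filled with $i$. Every other monomial $x^\alpha$ in $s_\mu(X_N)$ with $\alpha$ a partition satisfies $\alpha<\mu$ in dominance order, because in a semistandard tableau the entries $\le i$ must lie in the first $i$ rows. Now suppose a nontrivial relation holds within a fixed degree. Choose $\mu$ maximal in dominance among those with nonzero coefficient; then the monomial $x^\mu$ appears only in $s_\mu$, which is a contradiction. Alternatively, one can appeal to the bialternant formula $s_\mu(X_N)=a_{\mu+\delta}/a_\delta$ together with the linear independence of the alternants.

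The only substantive step is this linear independence. Since it is classical (see \cite[Chapter~7]{Sta1999}), I would simply cite it and keep the triangularity sketch as a remark.
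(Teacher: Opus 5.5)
Your proposal is correct and follows essentially the same route as the paper: you write $F=\pi_N(F)+\big(F-\pi_N(F)\big)$, note that the second term lies in $I_N$ and is killed by Lemma~\ref{le:anni}, and reduce injectivity to the linear independence of the Schur polynomials $s_{\mu}(X_N)$ with $\len(\mu)\le N$, which the paper likewise cites as the standard basis fact. Your optional dominance-order triangularity sketch for that fact is also sound; it just makes the cited result self-contained.
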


\begin{proof}
	Note that
	\begin{align*}
		F = \pi_N(F) + \big(F-\pi_N(F)\big),
	\end{align*}
	with $F-\pi_N(F) \in I_N$. We then use the annihilation of $\Spomega_N$ on $I_N$ to obtain the claimed relation \eqref{eq:spt-proj}. For the second assertion, we only need the fact that the Schur polynomials $s_{\lambda^\trans}(X_N)$ with $\lambda_1\le N$ form a basis of symmetric polynomials in $N$ variables.
\end{proof}

\begin{lemma}
	For any $F,G\in \Lambda$,
	\begin{align}\label{eq:FG-proj}
		\pi_N(FG) = \pi_N\big(F\pi_N(G)\big).
	\end{align}
\end{lemma}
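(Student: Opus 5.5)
The plan is to exploit the ideal property of $I_N$ from Lemma~\ref{le:I-ideal}, exactly as was done for $\Spomega_N$ in Lemma~\ref{le:spt-proj}. First, observe that $\pi_N$ is linear and that its kernel is precisely $I_N$: a function $\sum_\lambda a_\lambda s_\lambda$ is killed by $\pi_N$ if and only if $a_\lambda=0$ whenever $\lambda_1\le N$, that is, if and only if it lies in the span of the $s_\lambda$ with $\lambda_1>N$. Consequently, two symmetric functions have the same image under $\pi_N$ as soon as their difference lies in $I_N$.

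Next, split $G$ as
\begin{align*}
	G = \pi_N(G) + \big(G-\pi_N(G)\big),
\end{align*}
where $G-\pi_N(G)\in I_N$ by the definition of $\pi_N$. Multiplying by $F$ gives
\begin{align*}
	FG = F\pi_N(G) + F\big(G-\pi_N(G)\big).
\end{align*}
By Lemma~\ref{le:I-ideal}, $I_N$ is an ideal of $\Lambda$, so the second summand $F\big(G-\pi_N(G)\big)$ again lies in $I_N$.

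Finally, apply $\pi_N$ to this decomposition. Linearity together with the annihilation of $I_N$ by $\pi_N$ (Lemma~\ref{le:anni}) removes the second summand and yields \eqref{eq:FG-proj}.

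The only substantive ingredient is the ideal property, which is already available. Its proof as printed contains typographical slips ($\mu_1>1$ should read $\mu_1>N$, and $I_n$ should read $I_N$), but the argument itself is sound. If $c^\lambda_{\mu,\nu}>0$ then $\mu,\nu\subseteq\lambda$ by Lemma~\ref{le:LR-facts}, so $\mu_1>N$ forces $\lambda_1>N$. Hence I expect no real obstacle; the whole proof is a two-line consequence of Lemmas~\ref{le:I-ideal} and~\ref{le:anni}.
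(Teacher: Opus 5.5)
Your proof is correct and matches the paper's argument: write $FG = F\pi_N(G) + F\big(G-\pi_N(G)\big)$, use the ideal property of $I_N$ to place the second summand in $I_N$, then apply $\pi_N$, which annihilates $I_N$. You also read the typos in the proof of Lemma~\ref{le:I-ideal} correctly: every ``$>1$'' there should be ``$>N$'', and $I_n$ should be $I_N$.
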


\begin{proof}
	Note that
	\begin{align*}
		FG = F\cdot \pi_N(G) + F\cdot \big(G-\pi_N(G)\big).
	\end{align*}
	Here $G-\pi_N(G)$ is in $I_N$, and so is $F\cdot \big(G-\pi_N(G)\big)$ since $I_N$ is an ideal of $\Lambda$. Applying $\pi_N$ to both sides and using the annihilation of $\pi_N$ on $I_N$, the claimed relation follows.
\end{proof}

Finally, we show how Schur functions are connected with $d$-Hoggatt numbers and their $q$-analogs. To begin with, we specify two alphabets:
\begin{align*}
	Q_N := (1,q,\ldots,q^{N-1}), \qquad\qquad 1_N := (\underbrace{1,1,\ldots,1}_{\text{$N$ copies}}).
\end{align*}

\begin{lemma}
	We have
	\begin{align}\label{eq:Hoggatt-Schur-q}
		s_{(d^k)}(Q_n) = \qangle{n}{k}_q^{(d)}.
	\end{align}
	In particular,
	\begin{align}\label{eq:Hoggatt-Schur-1}
		s_{(d^k)}(1_n) = \qangle{n}{k}^{(d)}.
	\end{align}
\end{lemma}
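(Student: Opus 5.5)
The plan is to evaluate $s_{(d^k)}(Q_n)$ with the principal specialization formula, i.e.\ the $q$-hook-content formula (Stanley, Theorem~7.21.2):
\begin{align*}
	s_\lambda(1,q,\ldots,q^{n-1}) = q^{b(\lambda)}\prod_{u\in\lambda}\frac{[n+c(u)]_q}{[h(u)]_q},
\end{align*}
where $b(\lambda)=\sum_i (i-1)\lambda_i$, $c(u)=j-i$ is the content of the cell $u=(i,j)$, and $h(u)$ is its hook length. For $\lambda=(d^k)$ we get $b(\lambda)=d\binom{k}{2}$, which is exactly the prefactor in \eqref{eq:q-H-def}. It then remains to show that the product over cells agrees with $\prod_{j=0}^{d-1}\tqbinom{n+j}{k}_q/\tqbinom{k+j}{k}_q$.

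I will organize the cells of the rectangle by columns. Column $j+1$ (for $0\le j\le d-1$) has the cells $(i,j+1)$ with $1\le i\le k$.
\begin{itemize}
	\item The contents in this column are $j+1-i$, so the numerator factors are $[n+j]_q[n+j-1]_q\cdots[n+j-k+1]_q=[n+j]_q!/[n+j-k]_q!$.
	\item The hook length of $(i,j+1)$ is $(d-j-1)+(k-i)+1$. As $i$ runs over $1,\ldots,k$, these are $d-j,\ldots,d-j+k-1$, giving $[d-j+k-1]_q!/[d-j-1]_q!$.
\end{itemize}
Reindexing the hook factors by $j\mapsto d-1-j$ turns their product into $\prod_{j=0}^{d-1}[k+j]_q!/[j]_q!$. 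So the cell product equals
\begin{align*}
	\prod_{j=0}^{d-1}\frac{[n+j]_q!\,[j]_q!}{[n+j-k]_q!\,[k+j]_q!}
	=\prod_{j=0}^{d-1}\frac{[n+j]_q!/([k]_q![n+j-k]_q!)}{[k+j]_q!/([k]_q![j]_q!)},
\end{align*}
which is the claimed ratio of $q$-binomial coefficients.

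The case $k>n$ needs a separate check. The left side vanishes, since a column of length $k$ cannot be filled strictly with $n$ variables. On the right, $\tqbinom{n}{k}_q=0$ at $j=0$, and correspondingly the hook-content product has the factor $[n+c]_q=[0]_q=0$ at the cell with content $-n$. So both sides agree.

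Specializing $q=1$ gives \eqref{eq:Hoggatt-Schur-1}, because $Q_n$ becomes $1_n$. The same bookkeeping then matches \eqref{eq:d-H-def}, whose product is understood to run over $d$ factors $j=0,\ldots,d-1$, consistent with \eqref{eq:q-H-def}.

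The only step that needs care is the reindexing of the hook lengths, so that the factorial ratios pair up column by column. Nothing beyond the hook-content formula is required.
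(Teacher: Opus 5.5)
Your proof is correct and is essentially the paper's argument: both apply the $q$-hook-content formula to $(d^k)$, read off the prefactor $q^{d\binom{k}{2}}$, regroup the content and hook factors column by column into $\prod_{j=0}^{d-1}\tqbinom{n+j}{k}_q/\tqbinom{k+j}{k}_q$, and then set $q=1$. Your hook lengths $d-j+k-i$ are the correct ones (the paper's displayed denominator $[d+k-i-j-1]_q$ is a typo for $+1$), and your reading of the upper limit in \eqref{eq:d-H-def} as $d-1$ and your separate treatment of the case $k>n$ are both appropriate.
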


\begin{proof}
	By the hook-content formula~\cite[p.~374, Theorem~7.21.2]{Sta1999},
	\begin{align*}
		s_{(d^k)}(Q_n) = q^{d\binom{k}{2}} \prod_{i=1}^k \prod_{j=1}^d \frac{[n+j-i]_q}{[d+k-i-j-1]_q} = q^{d\binom{k}{2}} \prod_{j=0}^{d-1} \prod_{i=1}^k \frac{[n-k+i+j]_q}{[i+j]_q}.
	\end{align*}
	To get the last equality, we have interchanged the products, and then made the change of indices $(i,j)\mapsto (k-i+1,j+1)$ for the numerator and $(i,j)\mapsto (k-i+1,d-j)$ for the denominator. Now
	\begin{align*}
		s_{(d^k)}(Q_n) = q^{d\binom{k}{2}} \prod_{j=0}^{d-1} \frac{\qbinom{n+j}{k}_q}{\qbinom{k+j}{k}_q},
	\end{align*}
	thereby giving \eqref{eq:Hoggatt-Schur-q} by recalling \eqref{eq:q-H-def}. For \eqref{eq:Hoggatt-Schur-1}, we only need the specialization at $q=1$.
\end{proof}

\begin{lemma}
	We have, for $r\ge 0$,
	\begin{align}\label{eq:Hoggatt-Schur-q-r}
		s_{(r^k)}(Q_{d+k}) = q^{(r-d)\binom{k}{2}}\qangle{k+r}{k}_q^{(d)}.
	\end{align}
\end{lemma}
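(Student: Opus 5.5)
The plan is to evaluate both sides with the hook-content formula and compare them as products of $q$-integers. This is the route taken in the proof of \eqref{eq:Hoggatt-Schur-q}, now applied with $(r^k)$ in place of $(d^k)$ and $N=d+k$ variables in place of $n$.

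\textbf{Step 1 (left-hand side).} By the hook-content formula \cite[p.~374, Theorem~7.21.2]{Sta1999} for $\lambda=(r^k)$ and $N=d+k$, we get
\begin{align*}
	s_{(r^k)}(Q_{d+k}) = q^{b(\lambda)}\prod_{i=1}^{k}\prod_{j=1}^{r}\frac{[d+k+j-i]_q}{[h(i,j)]_q}.
\end{align*}
Here $b(\lambda)=\sum_i (i-1)\lambda_i = r\binom{k}{2}$. The hook length of cell $(i,j)$ is $h(i,j)=(r-j)+(k-i)+1$.

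\textbf{Step 2 (right-hand side).} By \eqref{eq:Hoggatt-Schur-q} with $n=k+r$, we have $\qangle{k+r}{k}_q^{(d)} = s_{(d^k)}(Q_{k+r})$. Applying hook-content again gives
\begin{align*}
	\qangle{k+r}{k}_q^{(d)} = q^{d\binom{k}{2}}\prod_{i=1}^{k}\prod_{j=1}^{d}\frac{[k+r+j-i]_q}{[(d-j)+(k-i)+1]_q}.
\end{align*}

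\textbf{Step 3 (compare).} The powers of $q$ already match, since $r\binom{k}{2} = (r-d)\binom{k}{2} + d\binom{k}{2}$. So it remains to show that the two $q$-products coincide. Put $t=k-i\in\{0,\dots,k-1\}$ and reverse $j$ in each hook product. The left-hand product then becomes
\begin{align*}
	\prod_{t=0}^{k-1}\frac{[d+t+1]_q\cdots[d+t+r]_q}{[t+1]_q\cdots[t+r]_q}
	=\prod_{t=0}^{k-1}\frac{[d+t+r]_q!\,[t]_q!}{[d+t]_q!\,[t+r]_q!}.
\end{align*}
The right-hand product becomes
\begin{align*}
	\prod_{t=0}^{k-1}\frac{[r+t+1]_q\cdots[r+t+d]_q}{[t+1]_q\cdots[t+d]_q}
	=\prod_{t=0}^{k-1}\frac{[r+t+d]_q!\,[t]_q!}{[r+t]_q!\,[t+d]_q!}.
\end{align*}
These are the same expression, by the symmetry $r\leftrightarrow d$ of the factorial quotient. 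This is essentially the $q$-analogue of the classical symmetry of the plane-partition count in a $k\times r\times d$ box. So the identity follows.

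\textbf{Edge case and main obstacle.} For $r=0$, both sides equal $1$, using the empty-product convention. The only real obstacle is bookkeeping: reindexing the contents and hooks correctly so that the factorial forms visibly agree. An alternative I would keep in reserve is MacMahon's box formula, which gives $s_{(r^k)}(Q_{d+k})$ as $q^{r\binom{k}{2}}$ times the generating function of plane partitions in a $k\times r\times d$ box, symmetric in $r$ and $d$. I expect the direct computation above to suffice.
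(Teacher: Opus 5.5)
Your proposal is correct and follows essentially the paper's argument. You apply the hook-content formula to $s_{(r^k)}(Q_{d+k})$ and then use the row-by-row symmetry
\[
\prod_{j=0}^{r-1}\frac{[d+i+j]_q}{[i+j]_q}=\prod_{j=0}^{d-1}\frac{[r+i+j]_q}{[i+j]_q}.
\]
The only cosmetic difference is that you reach the right-hand side through \eqref{eq:Hoggatt-Schur-q} with $n=k+r$, which is a second hook-content evaluation. The paper instead regroups directly into the $q$-binomial quotients of \eqref{eq:q-H-def}.
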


\begin{proof}
	If $r=0$, then both sides of \eqref{eq:Hoggatt-Schur-q-r} are $1$. Now assume $r\ge 1$. We have shown in the preceding proof that
	\begin{align*}
		s_{(r^k)}(Q_{d+k}) = q^{r\binom{k}{2}} \prod_{i=1}^k \prod_{j=0}^{r-1} \frac{[d+i+j]_q}{[i+j]_q}.
	\end{align*}
	Note that for each $i$,
	\begin{align*}
		\prod_{j=0}^{r-1} \frac{[d+i+j]_q}{[i+j]_q} = \prod_{j=0}^{d-1} \frac{[r+i+j]_q}{[i+j]_q}.
	\end{align*}
	Thus,
	\begin{align*}
		s_{(r^k)}(Q_{d+k}) = q^{r\binom{k}{2}} \prod_{i=1}^k  \prod_{j=0}^{d-1} \frac{[r+i+j]_q}{[i+j]_q} = q^{r\binom{k}{2}} \prod_{j=0}^{d-1} \frac{\qbinom{k+r+j}{k}_q}{\qbinom{k+j}{k}_q},
	\end{align*}
	which yields \eqref{eq:Hoggatt-Schur-q-r} in light of \eqref{eq:q-H-def}.
\end{proof}

\section{Strong $q$-log-concavity of the $q$-analog of $d$-Hoggatt numbers}\label{sec:q-Hoggatt}

\subsection{Proof of Theorem~\ref{thm:main3}}

Let $k,l$ be such that $0<l\le k<n$. It follows from \eqref{eq:Hoggatt-Schur-q} that
\begin{align*}
	\qangle{n}{k}_q \qangle{n}{l}_q - \qangle{n}{k+1}_q \qangle{n}{l-1}_q = s_{(d^k)}(Q_n)s_{(d^l)}(Q_n) - s_{(d^{k+1})}(Q_n)s_{(d^{l-1})}(Q_n).
\end{align*}
Now by Lemma~\ref{le:Schur-rectangular-2}, the right-hand side of the above is in $\mathbb{N}[q]$, thereby implying the strong $q$-log-concavity of $\big( \tqangle{n}{k}_q\big)_{0\le k\le n}$.

\subsection{Proof of Theorem~\ref{thm:main4}}

Let $m,n$ be such that $n\ge m> k$. If $k=0$, we have
\begin{align*}
	\qangle{n}{k}_q \qangle{m}{k}_q - \qangle{n+1}{k}_q \qangle{m-1}{k}_q = 0.
\end{align*}
Now assume $k\ge 1$. It follows from \eqref{eq:Hoggatt-Schur-q-r} that
\begin{align*}
	&\qangle{n}{k}_q \qangle{m}{k}_q - \qangle{n+1}{k}_q \qangle{m-1}{k}_q\\
	&\ \  = q^{(2d-n_\dagger-m_\dagger)\binom{k}{2}} \big(s_{(n_\dagger^k)}(Q_{d+k})s_{(m_\dagger^k)}(Q_{d+k}) - s_{((n_\dagger+1)^k)}(Q_{d+k})s_{((m_\dagger-1)^k)}(Q_{d+k})\big),
\end{align*}
where $m_\dagger := m-k$ and $n_\dagger := n-k$. Now by Lemma~\ref{le:Schur-rectangular-1}, the second factor on the right-hand side of the above is in $\mathbb{N}[q]$. Noting that the left-hand side is in $\mathbb{Z}[q]$ by definition, we conclude that the left-hand side must be in $\mathbb{N}[q]$ because multiplying by the prefactor $q^{(2d-n_\dagger-m_\dagger)\binom{k}{2}}$ does not affect the signs of the coefficients. We then arrive at the strong $q$-log-concavity of $\big( \tqangle{n}{k}_q\big)_{n\ge k}$.

\section{Schur positivity of an auxiliary family of functions}\label{sec:Schur-positivity}

In the next two sections, we set two types of partitions:
\begin{align*}
	R_l := (\underbrace{d,\ldots,d}_{\text{$l$ parts}}), \qquad\qquad P_l^{(j)} := (\underbrace{d,\ldots,d}_{\text{$l$ parts}},j),
\end{align*}
where $0\le j\le d$. Note that $R_l = P_l^{(0)}$ and $R_{l+1} = P_l^{(d)}$. We adopt the convention that all expressions containing $R_{l}$ or $P_{l}^{(j)}$ with $l<0$ are assumed to be $0$. For a partition $\beta$ and an integer $t\ge 0$, define the auxiliary functions
\begin{align}\label{eq:Phi-def}
	\Phi_{t,j}^{\beta} := \sum_{u+v=t} \big(s_{P_u^{(j)}/\beta} s_{R_v} - s_{R_u/\beta} s_{P_v^{(j)}}\big).
\end{align}

The purpose of this section is to establish the following Schur positivity property.

\begin{theorem}\label{th:Phi-Schur-positivity}
	For any $j$ and $t$ with $1\le j\le d-1$ and $t\ge 0$, and any partition $\beta$,
	\begin{align}
		\Phi_{t,j}^{\beta} \ge_s 0.
	\end{align}
\end{theorem}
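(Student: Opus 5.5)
The plan is to pass from the skew Schur functions to Hall inner products and reduce everything to comparing Littlewood--Richardson coefficients. By \eqref{eq:Hall-F}, for any partition $\lambda$,
\begin{align*}
	[s_\lambda]\Phi_{t,j}^{\beta} = \sum_{u+v=t}\Big(\bigangle{s_{P_u^{(j)}}, s_\beta\, s_{R_v} s_\lambda}\cdot 1 \Big)-\cdots,
\end{align*}
which is not quite the right form, so I will instead expand each product via \eqref{eq:Schur-c-2}. This gives
\begin{align*}
	[s_\lambda]\big(s_{P_u^{(j)}/\beta}s_{R_v}\big)=\sum_\gamma c^{P_u^{(j)}}_{\beta,\gamma}\,c^{\lambda}_{\gamma,R_v}
	=\bigangle{s_{P_u^{(j)}},\, s_\beta\, s_{\lambda/R_v}}.
\end{align*}
Consequently
\begin{align*}
	[s_\lambda]\Phi^\beta_{t,j}=\Big\langle s_\beta,\ \sum_{u+v=t}\big(s_{P_u^{(j)}/\cdot}\big)\Big\rangle,
\end{align*}
and the cleanest way to organize this is to write the coefficient as $\bigangle{s_\beta,\Psi_\lambda}$ with
\[
	\Psi_\lambda:=\sum_{u+v=t}\big(s_{P_u^{(j)}/\!\cdot}\ \text{paired with}\ s_{\lambda/R_v}\big).
\]
Concretely, the goal is to prove $\sum_{u+v=t}\big(s_{P_u^{(j)}}s_{R_v}-s_{R_u}s_{P_v^{(j)}}\big)$-type positivity in the ``doubled'' setting, namely that
\[
	\sum_{u+v=t}\Big(s_{P_u^{(j)}/\beta}\otimes s_{R_v}-s_{R_u/\beta}\otimes s_{P_v^{(j)}}\Big)
\]
is Schur positive after multiplication. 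Since $\beta$ is arbitrary, I first strip $\beta$ by using the skew Pieri/LR expansion $s_{P/\beta}=\sum_\gamma c^{P}_{\beta\gamma}s_\gamma$. It then suffices to show that, for each $\gamma$ and $\lambda$, the integer
\[
	D:=\sum_{u+v=t}\Big(c^{P_u^{(j)}}_{\beta,\gamma}c^\lambda_{\gamma,R_v}-c^{R_u}_{\beta,\gamma}c^{\lambda}_{\gamma,P_v^{(j)}}\Big),
\]
after summing over $\gamma$, is nonnegative. Because $R_u$ and $P_u^{(j)}$ are nearly rectangular, the coefficients $c^{R_u}_{\beta\gamma}$ are $0$ or $1$, with value $1$ exactly when $\gamma$ is the complement of $\beta$ in the $u\times d$ rectangle rotated by $180^\circ$. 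Likewise, $c^{P_u^{(j)}}_{\beta\gamma}$ is described by Pieri-type rules: $P_u^{(j)}/\beta$ is essentially a rotated $\gamma$ glued to a single short row. I will therefore rewrite $s_{R_u/\beta}=s_{\beta^{c,u}}$ (rotation complement) and $s_{P_u^{(j)}/\beta}$ as a sum of $s_{\beta'}$ over partitions obtained from the complement by a horizontal strip of size $j$, as in \eqref{eq:Pieri-skew}.

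After this reduction, each term becomes a product of the form $s_{\mu}\cdot s_{R_v}$ versus $s_{\mu'}\cdot s_{P_v^{(j)}}$, where $\mu,\mu'$ differ by horizontal strips of size $j$. The core step is an injection on LR tableaux, modeled on the proof of Lemma~\ref{le:Schur-rectangular-1}. The injection sends an LR tableau of shape $\lambda/\mu$ with type $P_v^{(j)}$ (paired with index $u$) to one of shape $\lambda/\mu'$ with type $R_{v'}$ (paired with $u'=t-v'$). To do this, I move the final row of length $j$ between the rectangle being removed and the rectangle being multiplied, shifting the index by one: $(u,v)\mapsto(u+1,v)$ or $(u,v+1)$. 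The telescoping over $u+v=t$ is what makes the sum positive even though the individual summands need not be. Indeed, the boundary terms at $u=0$ or $v=0$ must be checked separately, using the convention $R_l=P_l^{(j)}=0$ for $l<0$.

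I expect the main obstacle to be constructing this injection uniformly in $\beta$ and verifying that it respects the lattice-word condition. As a fallback, I would first try a generating-function/Jacobi–Trudi route. That route uses $\omega$ to turn the rectangles $R_l$ into $(l^d)$, expresses $s_{P_u^{(j)}}$ via the Pieri rule as $h$-multiples of rectangles, and reduces the claim to Lemma~\ref{le:Schur-rectangular-1} applied with the appropriate $a\ge b$. The condition $1\le j\le d-1$ ensures that the strip of size $j$ never fills an entire row, so this comparison is strict and the pairing $u\leftrightarrow v$ can be matched with $a\ge b$.
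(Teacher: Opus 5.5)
Your opening reductions are sound and match the paper's starting point: $[s_\lambda](s_{P_u^{(j)}/\beta}s_{R_v})=\bigangle{s_{P_u^{(j)}},s_\beta s_{\lambda/R_v}}$ is correct, and so is $s_{R_u/\beta}=s_\kappa$ for the rotated complement. One correction: $s_{P_u^{(j)}/\beta}=s_{(d\cup\kappa)/(d')}$, so the horizontal strip removed has size $d'=d-j$, not $j$.

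After that, however, you describe what a proof should do rather than give one. The step you defer, nonnegativity of the alternating sum over $u+v=t$, is the entire content of the theorem. The proposed injection is not well-posed as written: the moves $(u,v)\mapsto(u+1,v)$ or $(u,v+1)$ change $u+v$, so they cannot produce cancellation inside $\Phi_{t,j}^\beta$. That cancellation genuinely happens between different summands. For $\beta=\varnothing$ and $t\ge 1$, the $(t,0)$ summand is $s_{P_t^{(j)}}-h_js_{R_t}=-\sum_{i=1}^{j}s_{(d+i,d^{t-1},j-i)}$, while $\Phi_{t,j}^\varnothing=0$ by the symmetry $u\leftrightarrow v$. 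You give no mechanism that pairs the negative contributions of one summand with positive contributions of another while respecting the lattice-word condition. The single-column insertion of Lemma~\ref{le:Schur-rectangular-1} has no evident analogue here.

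The paper needs substantially more than this. It inducts on $t$ and splits by whether $\alpha$ has a part equal to $d$. If it does, it proves $[s_\alpha]\Phi_{t,j}^\beta\ge[s_{\alpha^-}]\Phi_{t-1,j}^\beta$ from two identities:
\begin{itemize}
\item $A_{u,p}(\alpha)=B_{u,p}(\alpha)$, obtained via $\Delta_d$, the projection $\pi_d$, and Lemma~\ref{le:Hall-alpha};
\item $B_{u,p-1}(\alpha)=B_{u-1,p-1}(\alpha^-)$, obtained from the Cho--Jung--Moon factorization, Lemma~\ref{le:CJM}.
\end{itemize}
If $\alpha$ has no part equal to $d$, the coefficient collapses to $A_{t-p,p}(\alpha)-B_{t-p,p}(\alpha)-B_{t-p+1,p-1}(\alpha)$. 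In the main subcase this equals $\bigangle{s_{\kappa^+},\pi_d(s_\tau s_L-s_\rho s_T)}$. Its positivity comes from a dual Jacobi--Trudi computation showing $\Spomega_d(s_\tau s_L-s_\rho s_T)=s_{(\gamma/\delta)^\trans}(X_d)$, a single skew Schur polynomial.

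Your fallback does not substitute for any of this, for three reasons.
\begin{itemize}
\item $s_{P_u^{(j)}}$ is not an $h$-multiple of a rectangle. Since $h_js_{R_u}=\sum_{i=0}^{j}s_{(d+i,d^{u-1},j-i)}$, it agrees with $s_{P_u^{(j)}}$ only modulo $I_d$.
\item Working modulo $I_d$ discards exactly the coefficients $[s_\alpha]$ with $\alpha_1>d$ that you must control. Skewing by $\beta$ also does not preserve $I_d$; for example, $s_{(d+1)/(1)}=h_d$.
\item Lemma~\ref{le:Schur-rectangular-1}, even after applying $\omega$, only compares products of two full rectangles of equal height. It says nothing about the near-rectangles $P_v^{(j)}$, the skew factors, or a sum over $u+v=t$.
\end{itemize}
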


We shall prove this theorem by induction on $t$. When $t=0$,
\begin{align*}
	\Phi_{0,j}^{\beta} = \begin{cases}
		s_{(j)/\beta}, & \text{if $\beta \ne \varnothing$},\\
		0, & \text{if $\beta = \varnothing$},
	\end{cases}
\end{align*}
which is clearly Schur positive. Now assume that the theorem is true for $0,\ldots,t-1$ with $t\ge 1$. It suffices to show that the coefficients in the Schur expansion of $\Phi_{t,j}^{\beta}$ satisfy
\begin{align}\label{eq:s-alpha-nonnegativity}
	[s_\alpha] \Phi_{t,j}^{\beta} \ge 0
\end{align}
for every partition $\alpha$.

Now there are two cases, depending on whether $\alpha$ contains parts of size $d$ or not. To facilitate our analysis, we introduce a linear map
\begin{align*}
	\begin{array}{cccc}
		\Delta_d \colon & \Lambda & \to & \Lambda\\
		& s_\lambda & \mapsto & s_{d\cup \lambda}
	\end{array}
\end{align*}
with $d\cup \lambda$ the partition derived by adjoining a part of size $d$ in $\lambda$ and rearranging the parts in weakly decreasing order. We will see how adjoining a part $d$ behaves after the projection $\pi_d$.

\begin{lemma}
	For any $F,G\in \Lambda$,
	\begin{align}\label{eq:proj-behavior}
		\pi_d\big(F \Delta_d(G)\big) = \Delta_d\big(\pi_d(FG)\big).
	\end{align}
\end{lemma}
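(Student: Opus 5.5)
By bilinearity of both sides in $F$ and $G$, we reduce to the case $F=s_\mu$ and $G=s_\nu$. The claim then becomes a statement about Littlewood--Richardson coefficients, so the plan is to compare the Schur coefficients of the two sides one partition at a time.

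First we compute both sides in this case.
\begin{itemize}
\item The left-hand side is $\pi_d(s_\mu s_{d\cup\nu})$. Its coefficient of $s_\lambda$ is $c^{\lambda}_{\mu,d\cup\nu}$ when $\lambda_1\le d$, and $0$ otherwise.
\item The right-hand side is $\Delta_d\big(\sum_{\kappa\colon \kappa_1\le d} c^{\kappa}_{\mu,\nu}s_\kappa\big)=\sum_{\kappa\colon\kappa_1\le d} c^{\kappa}_{\mu,\nu}\, s_{d\cup\kappa}$.
\item The map $\kappa\mapsto d\cup\kappa$ is injective, and every partition of width at most $d$ that has a part equal to $d$ is of the form $d\cup\kappa$ with $\kappa_1\le d$. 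So it suffices to prove two facts.
\begin{enumerate}
\item[(i)] If $\lambda_1\le d$ and $c^\lambda_{\mu,d\cup\nu}>0$, then $\lambda_1=d$.
\item[(ii)] For $\kappa_1\le d$ we have $c^{d\cup\kappa}_{\mu,d\cup\nu}=c^{\kappa}_{\mu,\nu}$.
\end{enumerate}
\end{itemize}
Fact (i) is immediate from Lemma~\ref{le:LR-facts}: $d\cup\nu\subseteq\lambda$ forces $\lambda_1\ge d$. We may also assume $\mu_1,\nu_1\le d$, since otherwise Lemma~\ref{le:LR-facts} shows that both sides vanish.

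For (ii), the cleanest route is to conjugate. All partitions involved have width at most $d$, so their conjugates have length at most $d$. Adjoining a part $d$ to a partition of width at most $d$ corresponds, after conjugation, to adding $1$ to each of the first $d$ parts, that is, to adding a full column of height $d$. Since $c^{\lambda}_{\mu,\nu}=c^{\lambda^\trans}_{\mu^\trans,\nu^\trans}$ (apply $\omega$ to \eqref{eq:Schur-c-1}), (ii) becomes
\[
c^{\kappa'+(1^d)}_{\mu',\nu'+(1^d)} = c^{\kappa'}_{\mu',\nu'}
\]
for partitions $\kappa',\mu',\nu'$ of length at most $d$. Both coefficients can be read in $d$ variables. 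Specializing with $\Sp_d$ turns multiplication by $s_{(1^d)}(X_d)=x_1\cdots x_d$ into a shift by $(1^d)$. Hence in $d$ variables
\[
s_{\mu'}\,s_{\nu'+(1^d)}=x_1\cdots x_d\, s_{\mu'}s_{\nu'}=\sum_{\kappa'} c^{\kappa'}_{\mu',\nu'}\, s_{\kappa'+(1^d)}.
\]
Any $\lambda'$ with $c^{\lambda'}_{\mu',\nu'+(1^d)}>0$ has length at most $d$, because its length is at most $\len(\mu')+\len(\nu'+(1^d))$. That bound is not automatically $\le d$, but $\lambda'\subseteq$ the conjugate of a partition of width at most $d$ by our width reduction, which gives length at most $d$. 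Now comparing coefficients of Schur polynomials indexed by partitions of length at most $d$, which are linearly independent in $d$ variables (Lemma~\ref{le:spt-proj}), yields (ii). Equivalently, one can bypass conjugation and apply $\Spomega_d$ directly, using \eqref{eq:spt-product} and Lemma~\ref{le:spt-proj}.

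The main obstacle is the bookkeeping needed to ensure that every partition appearing has width at most $d$, so that the injectivity in Lemma~\ref{le:spt-proj} applies and no terms are lost. This is exactly what $\pi_d$ and Lemma~\ref{le:LR-facts} handle.
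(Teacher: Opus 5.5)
Your proof is correct and is essentially the paper's argument written coefficientwise. The paper proves $\Spomega_d\big(\Delta_d(\pi_d(F))\big)=e_d(X_d)\,\Spomega_d(F)$ and then uses injectivity of $\Spomega_d$ on the span of $s_\lambda$ with $\lambda_1\le d$; this is exactly your conjugated identity $s_{\nu'+(1^d)}(X_d)=x_1\cdots x_d\,s_{\nu'}(X_d)$ combined with linear independence, as you note yourself at the end.

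One sentence should be removed, though it does not affect the conclusion. You claim that every $\lambda'$ with $c^{\lambda'}_{\mu',\nu'+(1^d)}>0$ has length at most $d$, and this is false. For $d=2$, $\mu'=(1)$, $\nu'=\varnothing$, the product $s_{(1)}s_{(1,1)}$ contains $s_{(1,1,1)}$; before conjugation, these are precisely the terms that $\pi_d$ is there to kill. The claim is not needed: terms of length greater than $d$ vanish in $d$ variables, and you only compare coefficients at $\kappa'+(1^d)$, which has length at most $d$.
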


\begin{proof}
	We start with an auxiliary relation that for any $F\in \Lambda$,
	\begin{align}\label{eq:spt-delta-proj}
		\Spomega_d \big(\Delta_d(\pi_d(F))\big) = e_d(X_d) \Spomega_d(F).
	\end{align}
	By the linearity of the operators, it suffices to prove \eqref{eq:spt-delta-proj} for every Schur polynomial $s_\lambda$. If $\lambda_1>d$, we have $\pi_d(s_\lambda) = 0$, and further $\Spomega_d(s_\lambda) = \Spomega_d\big(\pi_d(s_\lambda)\big)=0$ by \eqref{eq:spt-proj}, so that both sides of \eqref{eq:spt-delta-proj} vanishes. Now assume $\lambda_1\le d$. We have
	\begin{align*}
		\Spomega_d \big(\Delta_d(\pi_d(s_\lambda))\big) = \Spomega_d(s_{d\cup \lambda}) = s_{(d\cup \lambda)^\trans}(X_d).
	\end{align*}
	Note that in $d$ variables, there is only one way to fill the first column of $(d\cup \lambda)^\trans$, namely, by filling in $1$ through $n$ from top to bottom. By doing so, we factor out $e_d(X_d) = s_{(1^d)}(X_d)$ and $\Spomega_d(s_\lambda)$, and hence arrive at the desired relation \eqref{eq:spt-delta-proj} for $F=s_\lambda$,
	\begin{align*}
		\Spomega_d \big(\Delta_d(\pi_d(s_\lambda))\big) = e_d(X_d) \Spomega_d(s_\lambda).
	\end{align*}
	Now moving back to \eqref{eq:proj-behavior}, we see that both sides are in the span of $s_\lambda$ with $\lambda_1\le d$.
	Using \eqref{eq:spt-delta-proj} twice and recalling \eqref{eq:spt-product},
	\begin{align*}
		\Spomega_d\big(\Delta_d(\pi_d(FG))\big)&= e_d(X_d) \Spomega_d(FG)\\
		&= e_d(X_d) \Spomega_d(F)\Spomega_d(G)\\
		&= \Spomega_d(F) \Spomega_d\big(\Delta_d(\pi_d(G))\big).
	\end{align*}
	It is clear that $G-\pi_d(G)$ is in $I_d$, and so is $\Delta_d(G)-\Delta_d(\pi_d(G))$. By Lemma~\ref{le:anni}, this difference is annihilated by $\Spomega_d$. Therefore, $\Spomega_d\big(\Delta_d(\pi_d(G))\big) = \Spomega_d\big(\Delta_d(G)\big)$, so that
	\begin{align*}
		\Spomega_d\big(\Delta_d(\pi_d(FG))\big) = \Spomega_d(F) \Spomega_d\big(\Delta_d(G)\big) = \Spomega_d\big(F\Delta_d(G)\big) = \Spomega_d\big(\pi_d(F\Delta_d(G))\big),
	\end{align*}
	where we have used \eqref{eq:spt-product} for the second equality and \eqref{eq:spt-proj} for the last equality. Now the claimed identity \eqref{eq:proj-behavior} follows from the injectivity of $\Spomega_d$ in Lemma~\ref{le:spt-proj}.
\end{proof}

In the following two subsections, we fix $j$ with $1\le j\le d-1$ and put
\begin{align*}
	d' := d-j.
\end{align*}
Before analyzing the two cases of $\alpha$ individually, we record a useful fact about rotated skew diagrams inside a rectangular partition. It is known that the skew Schur function of a diagram is unchanged when the diagram is rotated through $180$ degrees. This is \cite[p.~467, Exerise~7.56(a)]{Sta1999} with its proof given on \cite[p.~516]{Sta1999}.

\begin{lemma}\label{le:rotation}
	Let $\beta\subseteq R_u = (d^u)$ so that $\len(\beta)\le u$, and define
	\begin{align*}
		\kappa := (d-\beta_u,d-\beta_{u-1},\ldots,d-\beta_1).
	\end{align*}
	Then
	\begin{align}\label{eq:rotation}
		s_{R_u/\beta} = s_\kappa,\qquad s_{R_{u+1}/\beta} = s_{d\cup \kappa},\qquad s_{P_u^{(j)}/\beta} = s_{(d\cup \kappa)/(d')}.
	\end{align}
\end{lemma}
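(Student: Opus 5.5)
All three identities in \eqref{eq:rotation} will be derived from the cited rotation invariance: if a skew diagram $D$ is rotated through $180$ degrees to give $D^\circ$, then $s_D=s_{D^\circ}$. The plan is to determine the rotated diagram explicitly in each case and recognize it as a (skew) Young diagram. Throughout, use $\len(\beta)\le u$ and $\beta_1\le d$, with $\beta_i=0$ for $i>\len(\beta)$.

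\emph{First identity.} The diagram $R_u/\beta$ sits in the $u\times d$ rectangle. Row $i$ consists of the cells in columns $\beta_i+1,\dots,d$, which is $d-\beta_i$ cells flush against the right edge. Rotate by $180$ degrees, sending cell $(i,c)$ to $(u+1-i,\,d+1-c)$. Old row $i$ becomes new row $u+1-i$, occupying columns $1,\dots,d-\beta_i$. Hence new row $i'$ is left-justified of length $d-\beta_{u+1-i'}$. These lengths form the weakly decreasing sequence $\kappa$, with trailing zeros allowed. So the rotated diagram is the straight shape $\kappa$, and $s_{R_u/\beta}=s_\kappa$.

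\emph{Second identity.} Since $\len(\beta)\le u$, the shape $R_{u+1}/\beta$ is $R_u/\beta$ with one extra full row of $d$ cells appended at the bottom. Rotate inside the $(u+1)\times d$ rectangle. The full bottom row becomes a full top row of length $d$, and the remaining rows become $\kappa$ exactly as before, shifted down by one. The result is the straight shape $(d,\kappa_1,\dots,\kappa_u)$, which is $d\cup\kappa$ because $\kappa_1\le d$.

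\emph{Third identity.} Now $P_u^{(j)}/\beta$ consists of $R_u/\beta$ together with a bottom row $u+1$ of $j$ cells in columns $1,\dots,j$. Rotating inside the $(u+1)\times d$ rectangle sends this bottom row to the top row in columns $d-j+1,\dots,d$, that is, columns $d'+1,\dots,d$. The rows below are $\kappa$, left-justified, as before. This is precisely the skew diagram $(d,\kappa_1,\dots,\kappa_u)/(d')=(d\cup\kappa)/(d')$, which gives the third identity.

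The only point needing care is that the rotated cell sets are genuine skew diagrams of the claimed shapes, rather than just having the right row lengths. Rows are contiguous intervals, and the left endpoints line up correctly because every row of $R_u/\beta$ ends at column $d$. So the check comes down to this right-alignment fact, and no case is really an obstacle. The degenerate cases $u=0$, and $\beta$ equal to the whole rectangle so that $\kappa=\varnothing$, fit the same description. Under the paper's convention, $P_0^{(j)}/\beta$ gives $s_{(d)/(d')}=h_j$ when $\beta=\varnothing$, which is consistent.
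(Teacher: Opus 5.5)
Your proposal is correct and follows the paper's own route: both arguments rest on the $180$-degree rotation invariance of skew Schur functions. You carry out explicitly the cell-by-cell identification of the rotated diagrams, including the right-alignment check and the degenerate cases, which the paper compresses into ``the remaining two relations can be derived in the same way.''
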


\begin{proof}
	Since $\beta$ is inside $R_u$, it is also inside $R_{u+1}$ and $P_u^{(j)}$. Note that $R_u/\beta$ is obtained by rotating $\kappa$ through $180$ degrees, and hence $s_{R_u/\beta} = s_\kappa$. The remaining two relations can be derived in the same way.
\end{proof}

Finally, we introduce two auxiliary coefficients for $u,v\ge 0$ and an arbitrary partition $\lambda$:
\begin{align}\label{eq:AB-def}
	\begin{aligned}
		A_{u,v}(\lambda) &:= \bigangle{s_\lambda,s_{P_u^{(j)}/\beta}s_{R_v}}\ge 0,\\
		B_{u,v}(\lambda) &:= \bigangle{s_\lambda,s_{R_u/\beta}s_{P_v^{(j)}}}\ge 0,
	\end{aligned}
\end{align}
with the nonnegativity coming from the Schur positivity of $s_{P_u^{(j)}/\beta}s_{R_v}$ and $s_{R_u/\beta}s_{P_v^{(j)}}$. In addition, we adopt the convention that $A_{u,v}(\lambda)=B_{u,v}(\lambda)=0$ if $u<0$ or $v<0$.

\begin{lemma}\label{le:AB=0}
	For any partition $\lambda$ with $p_\lambda:=\#\{i\colon \lambda_i>d\}$, we have
	\begin{align}\label{eq:AB=0}
		\begin{cases}
			A_{u,v}(\lambda) = 0, & \text{if $v<p_\lambda$},\\
			B_{u,v}(\lambda) = 0, & \text{if $v<p_\lambda-1$}.
		\end{cases}
	\end{align}
\end{lemma}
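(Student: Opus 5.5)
The plan is to unpack each coefficient as a Littlewood--Richardson coefficient and then use the width bound \eqref{eq:LR-facts-ineq} of Lemma~\ref{le:LR-facts}. By \eqref{eq:Hall-F} and \eqref{eq:Schur-c-1},
\begin{align*}
	A_{u,v}(\lambda) = \sum_{\mu} c_{\mu,R_v}^{\lambda}\,[s_\mu]\, s_{P_u^{(j)}/\beta}, \qquad B_{u,v}(\lambda) = \sum_{\mu} c_{\mu,P_v^{(j)}}^{\lambda}\,[s_\mu]\, s_{R_u/\beta}.
\end{align*}
Every $s_\mu$ occurring with nonzero coefficient in $s_{P_u^{(j)}/\beta}$ or $s_{R_u/\beta}$ satisfies $\mu_1\le d$. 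Indeed, by \eqref{eq:Schur-c-2} such $\mu$ has $c_{\mu,\beta}^{P_u^{(j)}}>0$ (respectively $c_{\mu,\beta}^{R_u}>0$), and Lemma~\ref{le:LR-facts} then gives $\mu\subseteq P_u^{(j)}$ or $\mu\subseteq R_u$, both of width at most $d$. So it suffices to show that $c_{\mu,R_v}^{\lambda}=0$ whenever $\mu_1\le d$ and $v<p_\lambda$, and that $c_{\mu,P_v^{(j)}}^{\lambda}=0$ whenever $\mu_1\le d$ and $v<p_\lambda-1$.

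For the first claim, I would use $c_{\mu,R_v}^{\lambda}=c_{R_v,\mu}^{\lambda}$ and apply \eqref{eq:LR-facts-ineq} with the roles arranged so that the first index is $\mu$. This gives
\begin{align*}
	\lambda_i\le \mu_1+(R_v)_i\le d+(R_v)_i.
\end{align*}
For $i=p_\lambda$ we have $\lambda_{p_\lambda}>d$, which forces $(R_v)_{p_\lambda}>0$, that is, $v\ge p_\lambda$. Hence the coefficient vanishes when $v<p_\lambda$. The edge case $p_\lambda=0$ is vacuous, since then $v<0$ and the convention $A_{u,v}(\lambda)=0$ applies.

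For the second claim the same argument gives $\lambda_{p_\lambda}\le d+(P_v^{(j)})_{p_\lambda}$. This forces $(P_v^{(j)})_{p_\lambda}>0$, and since $P_v^{(j)}$ has $v+1$ parts when $j\ge1$ (and $v$ parts when $j=0$), we get $p_\lambda\le v+1$. That is, $v\ge p_\lambda-1$. Again, negative $v$ is covered by the convention.

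I expect no serious obstacle. The only care points are orienting the Littlewood--Richardson symmetry correctly, so that the width bound uses $\mu_1\le d$ rather than $(R_v)_1$, and checking that the $\mu$'s arising from the skew Schur functions indeed have width at most $d$, which the containment part of Lemma~\ref{le:LR-facts} supplies.
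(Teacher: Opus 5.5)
Your proof is correct and is essentially the paper's argument: expand via Littlewood--Richardson coefficients, get $\mu_1\le d$ from $\mu\subseteq P_u^{(j)}$ (resp.\ $R_u$), and apply \eqref{eq:LR-facts-ineq}. The only difference is that the paper evaluates that inequality at row $v+1$ (resp.\ $v+2$), whereas you evaluate it at row $p_\lambda$, which is the contrapositive of the same step; your appeal to $c_{\mu,R_v}^{\lambda}=c_{R_v,\mu}^{\lambda}$ is harmless but unnecessary, since $\mu$ is already the first lower index in $c_{\mu,R_v}^{\lambda}$.
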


\begin{proof}
	By \eqref{eq:Schur-c-2} and \eqref{eq:Schur-c-1},
	\begin{align*}
		s_{P_u^{(j)}/\beta}s_{R_v} = \sum_{\mu} c_{\mu,\beta}^{P_u^{(j)}} s_\mu s_{R_v} = \sum_\lambda \sum_\mu c_{\mu,\beta}^{P_u^{(j)}} c_{\mu,R_v}^{\lambda} s_\lambda,
	\end{align*}
	so that
	\begin{align}\label{eq:A-expression}
		A_{u,v}(\lambda) = \sum_\mu c_{\mu,\beta}^{P_u^{(j)}} c_{\mu,R_v}^{\lambda}.
	\end{align}
	To ensure $A_{u,v}(\lambda)>0$, we need to find a certain $\mu$ such that $c_{\mu,\beta}^{P_u^{(j)}}>0$ and $c_{\mu,R_v}^{\lambda}>0$ hold simultaneously. For the first inequality, Lemma~\ref{le:LR-facts} tells us that $\mu \subseteq P_u^{(j)}$ so that $\mu_1\le d$. For the second inequality, we use \eqref{eq:LR-facts-ineq} in Lemma~\ref{le:LR-facts} to get $\lambda_{v+1} \le \mu_1 + (R_v)_{v+1} = \mu_1$. Thus, $\lambda_{v+1}\le d$. However, if $v<p_\lambda$, then $\lambda_{v+1}>d$ by the definition of $p_\lambda$. This produces a contradiction, and hence $A_{u,v}(\lambda) = 0$ when $v<p_\lambda$. For the second statement involving $B_{u,v}(\lambda)$, we apply a similar analysis but this time to $\lambda_{v+2}$.
\end{proof}

\subsection{The partition $\alpha$ contains parts of size $d$}

Throughout this subsection, since $\alpha$ contains parts of size $d$, we always write it as
\begin{align}
	\alpha := (d+\tau_1,\ldots,d+\tau_p,d,\eta_1,\ldots,\eta_r),
\end{align}
where $\tau:=(\tau_1,\ldots,\tau_p)$ is a partition with exactly $p$ \emph{positive} parts, and $\eta:=(\eta_1,\ldots,\eta_r)$ is a partition with width $\eta_1\le d$. Define
\begin{align*}
	\alpha^- := (d+\tau_1,\ldots,d+\tau_p,\eta_1,\ldots,\eta_r),
\end{align*}
by removing the displayed part of size $d$.

\begin{lemma}\label{le:Hall-alpha}
	For any partition $\lambda$ with $\lambda_1\le d$,
	\begin{align}\label{eq:Hall-alpha}
		\bigangle{s_\lambda,s_{\alpha/P_p^{(j)}}} = \bigangle{s_{\lambda/(d')},s_{\alpha^-/R_p}}.
	\end{align}
\end{lemma}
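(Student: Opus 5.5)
The plan is to split both skew shapes into disconnected pieces and reduce the identity to a comparison of two Pieri expansions. The comparison holds modulo the ideal $I_d$, and modulo $I_d$ is invisible when pairing with $s_\lambda$ for $\lambda_1\le d$.

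\textbf{Step 1: factor the two skew Schur functions.} Look at the skew diagram $\alpha/P_p^{(j)}$.
\begin{itemize}
\item Rows $1,\ldots,p$ carry $\tau_i$ cells in columns $d+1,\ldots,d+\tau_i$. Shifting left, this block is the Young diagram of $\tau$.
\item Row $p+1$ carries the $d'=d-j$ cells in columns $j+1,\ldots,d$.
\item The rows below are exactly the diagram of $\eta$, with $\eta_1\le d$.
\end{itemize}
The cell $(p,d+1)$ and the cell $(p+1,d)$ share only a corner. So the $\tau$-block is disconnected from the rest, and the rest is the skew diagram $(d\cup\eta)/(j)$. Since the skew Schur function of a disconnected diagram is the product over its components, I get
\begin{align*}
s_{\alpha/P_p^{(j)}} = s_\tau\, s_{(d\cup\eta)/(j)}, \qquad s_{\alpha^-/R_p} = s_\tau\, s_\eta .
\end{align*}
The second identity follows the same way, again using $\eta_1\le d$. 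By \eqref{eq:Hall-F} with $\mu=(d')$, the right-hand side of \eqref{eq:Hall-alpha} equals $\bigangle{s_\lambda, h_{d'} s_\tau s_\eta}$. It therefore suffices to show
\begin{align*}
\bigangle{s_\lambda, s_\tau\, s_{(d\cup\eta)/(j)}} = \bigangle{s_\lambda, s_\tau\, h_{d'} s_\eta}.
\end{align*}

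\textbf{Step 2: compare the two Pieri expansions.} Expand each side by the Pieri rule.
\begin{itemize}
\item By the skew Pieri rule \eqref{eq:Pieri-skew}, $s_{(d\cup\eta)/(j)}$ is the sum of $s_\mu$ over $\mu$ with $(d,\eta_1,\eta_2,\ldots)/\mu$ a horizontal strip of size $j$. By \eqref{eq:HS-condition} these are exactly the $\mu$ satisfying $d\ge\mu_1\ge\eta_1\ge\mu_2\ge\eta_2\ge\cdots$ and $|\mu|=|\eta|+d-j=|\eta|+d'$.
\item By the product Pieri rule \eqref{eq:Pieri-product}, $h_{d'}s_\eta$ is the sum of $s_\nu$ over $\nu$ with $\nu_1\ge\eta_1\ge\nu_2\ge\eta_2\ge\cdots$ and $|\nu|=|\eta|+d'$.
\end{itemize}
The two index sets coincide except that the first has the extra constraint $\mu_1\le d$. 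Hence
\begin{align*}
h_{d'}s_\eta - s_{(d\cup\eta)/(j)} = \sum_{\nu\colon \nu_1>d} s_\nu \in I_d .
\end{align*}

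\textbf{Step 3: kill the difference.} Since $I_d$ is an ideal by Lemma~\ref{le:I-ideal}, multiplying the difference by $s_\tau$ keeps it in $I_d$. Because $\lambda_1\le d$, the function $s_\lambda$ is orthogonal to every element of $I_d$. Equivalently, $\bigangle{s_\lambda,F}=\bigangle{s_\lambda,\pi_d(F)}$, combined with \eqref{eq:FG-proj}. This gives the required equality.

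The only delicate point is Step 1: one must check carefully that the corner-touching blocks really form separate components, and that the factorization into a product is legitimate. Disconnected pieces occupy disjoint row and column ranges, so semistandard fillings factor independently. The rest is bookkeeping with the two Pieri rules.
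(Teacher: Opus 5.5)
Your proposal is correct and follows the paper's proof essentially step for step. You use the same factorizations $s_{\alpha/P_p^{(j)}}=s_\tau s_{(d\cup\eta)/(j)}$ and $s_{\alpha^-/R_p}=s_\tau s_\eta$, the same comparison of the skew and product Pieri expansions giving $s_{(d\cup\eta)/(j)}\equiv h_{d'}s_\eta \pmod{I_d}$, and the same combination of the ideal property of $I_d$ with orthogonality against $s_\lambda$ for $\lambda_1\le d$; the only cosmetic difference is that you apply \eqref{eq:Hall-F} at the start, while the paper applies it in its final chain of equalities.
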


\begin{proof}
	The skew diagram $\alpha/P_p^{(j)}$ can be split into two disjoint groups, one of shape $\tau$ to the right of the rectangle $R_p$ in the first $p$ rows and the other of shape $(d\cup \eta)/(j)$ below the same rectangle. Moreover, the two groups impose no row or column comparison. Hence,
	\begin{align}\label{eq:alpha&P}
		s_{\alpha/P_p^{(j)}} = s_\tau s_{(d\cup \eta)/(j)}.
	\end{align}
	In a similar vein,
	\begin{align}\label{eq:alpha-&R}
		s_{\alpha^-/R_p} = s_\tau s_\eta.
	\end{align}
	By \eqref{eq:Pieri-skew},
	\begin{align*}
		s_{(d\cup \eta)/(j)} = \sum_{\substack{\xi\colon (d\cup \eta)/\xi\HS\\|d\cup \eta|-|\xi|=j}} s_\xi.
	\end{align*}
	Here, the second condition on $\xi$ is the same as $|\xi|-|\eta| = d-j =d'$. Moreover, the first condition, by \eqref{eq:HS-condition}, is equivalent to $d\ge \xi_1\ge \eta_1\ge \xi_2 \ge \eta_2 \ge \cdots$, and by \eqref{eq:HS-condition} again, to $\xi/\eta$ being a horizontal strip with $\xi_1\le d$. It follows that
	\begin{align*}
		s_{(d\cup \eta)/(j)} = \sum_{\substack{\xi\colon \xi/\eta\HS\\|\xi|-|\eta|=d'\\\xi_1\le d}} s_\xi.
	\end{align*}
	On the other hand, by \eqref{eq:Pieri-product},
	\begin{align*}
		h_{d'}s_\eta = \sum_{\substack{\xi\colon \xi/\eta\HS\\|\xi|-|\eta|=d'}} s_\xi.
	\end{align*}
	We then derive that
	\begin{align*}
		s_{(d\cup \eta)/(j)} \equiv h_{d'}s_\eta \pmod{I_d},
	\end{align*}
	and further from the fact that $I_d$ is an ideal stated in Lemma~\ref{le:I-ideal} that
	\begin{align*}
		s_\tau s_{(d\cup \eta)/(j)} \equiv h_{d'}s_\tau s_\eta \pmod{I_d}.
	\end{align*}
	Recalling the assumption $\lambda_1\le d$ and using the orthogonality of the Hall inner product, we have
	\begin{align}\label{eq:Hall-from-I}
		\bigangle{s_\lambda,s_\tau s_{(d\cup \eta)/(j)}} = \bigangle{s_\lambda,h_{d'}s_\tau s_\eta}.
	\end{align}
	Finally, noting that $h_{d'} = s_{(d')}$ in \eqref{eq:h/e-def}, the claimed relation follows from
	\begin{align*}
		\bigangle{s_\lambda,s_{\alpha/P_p^{(j)}}} &= \bigangle{s_{\lambda},s_\tau s_{(d\cup \eta)/(j)}}\\
		&= \bigangle{s_\lambda,h_{d'}s_\tau s_\eta}\\
		&= \bigangle{s_{\lambda/(d')},s_\tau s_\eta}\\
		&= \bigangle{s_{\lambda/(d')},s_{\alpha^-/R_p}},
	\end{align*}
	where we have used \eqref{eq:alpha&P}, \eqref{eq:Hall-from-I}, \eqref{eq:Hall-F}, and \eqref{eq:alpha-&R} for the four equalities, respectively.
\end{proof}

We also need a King--Tollu--Toumazet type theorem discovered by Cho, Jung, and Moon~\cite[p.~487, Theorem~1.7 with $r=1$]{CJM2008}.

\begin{lemma}\label{le:CJM}
	Let $\lambda,\mu,\nu$ be partitions. Assume that $c_{\mu,\nu}^\lambda > 0$ and that there are indices $i,j,k$ such that $i+j=k+1$ and $\lambda_k = \mu_i+\nu_j$. Then
	\begin{align}
		c_{\mu,\nu}^{\lambda} = c_{\mu',\nu'}^{\lambda'},
	\end{align}
	where $\lambda'$, $\mu'$, and $\nu'$ are obtained by removing $\lambda_k$, $\mu_i$, and $\nu_j$ from $\lambda$, $\mu$, and $\nu$, respectively.
\end{lemma}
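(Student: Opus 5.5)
Since this lemma is quoted from Cho, Jung, and Moon~\cite{CJM2008}, it could simply be cited. To prove it directly, I would use the Knutson--Tao hive model, where $c_{\mu,\nu}^{\lambda}$ is the number of integer hives with boundary labels given by the partial sums of $\mu$, $\nu$, and $\lambda$. The idea is that $\lambda_k\le \mu_i+\nu_j$ for $i+j=k+1$ is the Weyl inequality, so the hypothesis $\lambda_k=\mu_i+\nu_j$ is an extremal case. In a hive, every linear inequality of Horn type is a nonnegative combination of elementary rhombus inequalities. Equality should therefore freeze a whole strip of the hive, and removing that strip should give a bijection onto the hives counting $c_{\mu',\nu'}^{\lambda'}$.

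\textbf{Step 1: write the Weyl inequality as a sum of rhombus inequalities.} Inside the hive triangle, the boundary edges labelled by $\mu_i$, $\nu_j$, and $\lambda_k$ lie on the three sides, and $i+j=k+1$ means a single lattice path of rhombi joins them. I would define this explicitly as the path that starts from the $\mu_i$-edge, runs parallel to the $\nu$-side, and turns once to reach the $\lambda_k$-edge. Then I would check that
\begin{align*}
\mu_i+\nu_j-\lambda_k=\sum_{\rho} (\text{slack of rhombus }\rho),
\end{align*}
summed over the rhombi $\rho$ along the path. Since every slack is nonnegative in a hive, this recovers the inequality. It also shows that for every hive $h$ counted by $c_{\mu,\nu}^{\lambda}$, all these rhombi are flat. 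The hypothesis $c_{\mu,\nu}^{\lambda}>0$ is used only to guarantee that such hives exist and that the boundary data are consistent.

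\textbf{Step 2: delete the strip and show it is a bijection.} Flatness along the path means the hive is affine-linear across the strip of small triangles traced by the path. Deleting that strip and gluing the two remaining pieces should give a hive of size one smaller. Its boundaries are the partial sums of $\mu'$, $\nu'$, and $\lambda'$, because the deleted boundary edges are exactly those carrying $\mu_i$, $\nu_j$, and $\lambda_k$. Conversely, given a hive for $(\lambda',\mu',\nu')$, I would insert a flat strip to obtain a hive for $(\lambda,\mu,\nu)$. I then need two checks. First, the rhombus inequalities straddling the glued seam are exactly the rhombus inequalities of the original hive next to the strip, which holds because the strip is affine. Second, integrality is preserved.

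\textbf{Main obstacle.} I expect the hardest part to be the bookkeeping in Step 2: showing that the rhombi crossing the seam after gluing correspond exactly to rhombi in the original hive, particularly at the turning point of the path. If that becomes unwieldy, my fallback is the Littlewood--Richardson tableau model. There I would show that equality $\lambda_k=\mu_i+\nu_j$ forces a rigid configuration, namely specific cells in rows $i,\dots,k$ containing forced entries. This generalizes the argument already used in Lemma~\ref{le:LR-facts}. I would then delete that configuration and relabel to get a bijection with LR tableaux of shape $\lambda'/\mu'$ and type $\nu'$.
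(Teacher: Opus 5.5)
The paper gives no proof of this lemma; it is quoted from Cho--Jung--Moon~\cite{CJM2008}, so your opening remark that it can simply be cited is exactly what the paper does. The direct hive argument you sketch, however, has two genuine gaps.

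\textbf{Step~1: the identity fails for your path.} A chain of rhombi whose only boundary ends are the $\mu_i$-edge and the $\lambda_k$-edge telescopes to $\mu_i-\lambda_k$ plus the label of an edge of the unit triangle where the path turns. Unless $k=n$ that edge is interior, so $\nu_j$ never appears. The correct object is a tripod: three chains of rhombi issuing from one unit triangle, whose position is fixed by $i+j=k+1$, ending respectively at the $\mu_i$-, $\nu_j$- and $\lambda_k$-edges. The slack $\mu_i+\nu_j-\lambda_k$ is the total slack of the three chains. Consequently:
\begin{itemize}
\item The frozen region has $2n-1=n^2-(n-1)^2$ unit triangles.
\item Removing it means translating three complementary regions, with three seams meeting at a point, not gluing two pieces along one strip.
\item Only the chain rhombi are forced flat, not every rhombus inside a leg. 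What does hold is that the edges crossing a leg in the chain direction all carry the same boundary label, so the two sides of each leg differ by a constant.
\end{itemize}

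\textbf{Step~2: the seam correspondence is not one-to-one.} A rhombus of the glued hive straddling a seam has slack equal to the sum of the slacks of \emph{two} rhombi of the original hive, one on each side of the removed leg. For example, along the leg ending at the $\lambda_k$-edge, such a rhombus compares two $\lambda$-parallel edges $e,e'$ lying just above and just below the leg. Its inequality $\ell(e)\ge\ell(e')$ is the sum of the original inequalities $\ell(e)\ge\lambda_k$ and $\lambda_k\ge\ell(e')$.

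So deletion does give an injection, hence $c_{\mu,\nu}^{\lambda}\le c_{\mu',\nu'}^{\lambda'}$. For the converse, however, you must insert a tripod into an \emph{arbitrary} hive for $(\lambda',\mu',\nu')$ and verify each half separately, and ``the strip is affine'' gives you nothing there. The halves must come from monotonicity chains inside the smaller hive, combined with the partition inequalities and the hypothesis $\lambda_k=\mu_i+\nu_j$. For instance:
\begin{itemize}
\item $\ell(e)\ge\lambda_{k-1}\ge\lambda_k$;
\item $\ell(e')\le\mu_{i+1}+\nu_{j+1}\le\mu_i+\nu_j=\lambda_k$, via a Weyl chain in the smaller hive.
\end{itemize}
Analogous estimates are needed along the other two legs and for the non-chain rhombi inside the tripod. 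This reverse inequality is where the real content of the reduction formula lies, and your proposal does not supply it. Your Littlewood--Richardson tableau fallback is likewise only a pointer, not an argument.
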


Now we may prove the desired nonnegativity \eqref{eq:s-alpha-nonnegativity} by combining the following result and our inductive assumption $\Phi_{t-1,j}^\beta\ge_s 0$ so that $[s_{\alpha^-}]\Phi_{t-1,j}^\beta \ge 0$.

\begin{theorem}\label{th:alpha-contain-d}
	Let $\alpha$ be a partition containing parts of size $d$. Then
	\begin{align}\label{eq:Phi-ineq-alpha-d}
		[s_{\alpha}]\Phi_{t,j}^\beta\ge [s_{\alpha^-}]\Phi_{t-1,j}^\beta.
	\end{align}
\end{theorem}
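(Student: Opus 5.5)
The plan is to expand both sides through the coefficients \eqref{eq:AB-def}:
\begin{align*}
	[s_\alpha]\Phi_{t,j}^\beta = \sum_{u+v=t}\big(A_{u,v}(\alpha)-B_{u,v}(\alpha)\big), \qquad [s_{\alpha^-}]\Phi_{t-1,j}^\beta = \sum_{u+v=t-1}\big(A_{u,v}(\alpha^-)-B_{u,v}(\alpha^-)\big).
\end{align*}
Then I would match terms with index $(u,v)$ on the left against $(u,v-1)$ on the right, i.e.\ remove one part $d$ from the ``$R_v$'' or ``$P_v^{(j)}$'' factor. Lemma~\ref{le:AB=0} with $p_\alpha=p_{\alpha^-}=p$ restricts the range: only $v\ge p$ contributes to $A$ and only $v\ge p-1$ contributes to $B$. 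Hence the left side has at most the unmatched boundary terms with $v=p$ for $A$ and $v=p-1,p$ for $B$, compared with the shifted right side.

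\textbf{Step 1: the shifted bulk terms coincide.} I would show that $A_{u,v}(\alpha)=A_{u,v-1}(\alpha^-)$ for $v\ge p+1$ and $B_{u,v}(\alpha)=B_{u,v-1}(\alpha^-)$ for $v\ge p$. Write $A_{u,v}(\alpha)=\sum_\mu c_{\mu,\beta}^{P_u^{(j)}}c_{\mu,R_v}^{\alpha}$ as in \eqref{eq:A-expression}, and similarly for $B$. In each product $c_{\mu,R_v}^\alpha$, the row $p+1$ of $\alpha$ has length $d$. If $c_{\mu,R_v}^\alpha>0$, I expect one can locate indices with $i+j'=p+2$ and $\alpha_{p+1}=\mu_i+(R_v)_{j'}$, using $\mu_1\le d$ and \eqref{eq:LR-facts-ineq}. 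Lemma~\ref{le:CJM} would then delete the part $d$ from $\alpha$ and from $R_v$, giving $c_{\mu,R_{v-1}}^{\alpha^-}$. The analogous computation works for $c_{\mu,P_v^{(j)}}^\alpha$.

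For the part of $\alpha$ of width at most $d$, an alternative route is the identity \eqref{eq:proj-behavior}: $s_{R_v}=\Delta_d(s_{R_{v-1}})$ and $s_{P_v^{(j)}}=\Delta_d(s_{P_{v-1}^{(j)}})$. I would combine it with the splitting of the $\tau$-block as in the proof of Lemma~\ref{le:Hall-alpha}. I would try the Lemma~\ref{le:CJM} route first, since it handles the rows with $\alpha_i>d$ directly.

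\textbf{Step 2: the boundary terms.} What remains is to show
\begin{align*}
	A_{t-p,p}(\alpha)-B_{t-p,p}(\alpha)-B_{t-p+1,p-1}(\alpha)\ \ge\ -B_{t-p,p-1}(\alpha^-),
\end{align*}
with the precise bookkeeping to be fixed once Step 1 settles exactly which indices match. The correspondence there may fail, because $R_v$ has too few rows to reach row $p+1$.

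For $B_{t-p,p}(\alpha)=\bigangle{s_{\alpha/P_p^{(j)}},s_{R_{t-p}/\beta}}$, I would apply Lemma~\ref{le:rotation} to write $s_{R_{t-p}/\beta}=s_\kappa$ with $\kappa_1\le d$. Lemma~\ref{le:Hall-alpha} then turns this term into $\bigangle{s_{\kappa/(d')},s_{\alpha^-/R_p}}$. Using $s_{P_u^{(j)}/\beta}=s_{(d\cup\kappa)/(d')}$ from \eqref{eq:rotation}, this should be comparable with an $A$-term for $\alpha^-$ or $\alpha$, and in particular bounded by $A_{t-p,p}(\alpha)=\bigangle{s_{\alpha/R_p},s_{P_{t-p}^{(j)}/\beta}}$.

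The extra $B$-term with $v=p-1$ on the left should be absorbed by its counterpart $B_{t-p,p-1}(\alpha^-)$ on the right via the same CJM deletion, now removing the part $j$ of $P_{p-1}^{(j)}$ instead of a part $d$.

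\textbf{Main obstacle.} I expect Step 2 to be the hardest part: the exact form of the boundary inequality, and producing an injection of Littlewood--Richardson tableaux showing $A_{t-p,p}(\alpha)\ge B_{t-p,p}(\alpha)$ after the Lemma~\ref{le:Hall-alpha} rewriting. There I would proceed as in the proof of Lemma~\ref{le:Schur-rectangular-1}, filling an extra column of $d'$-type cells to embed one family of tableaux into the other.
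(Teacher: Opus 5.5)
Your reduction is the right one: expanding in $A_{u,v},B_{u,v}$, matching $(u,v)$ with $(u,v-1)$ for $v\ge p+1$, and invoking Lemma~\ref{le:AB=0} leaves exactly the boundary inequality you display in Step~2. However, Step~1 as stated is false at $v=p$ for $B$. Take $d=2$, $j=1$, $\alpha=(2,1)$ (so $p=0$, $\alpha^-=(1)$) and $\beta=\varnothing$. Then $B_{1,0}(\alpha)=\bigangle{s_{(2,1)},s_{(2)}s_{(1)}}=1\neq 0=B_{1,-1}(\alpha^-)$. For $v\ge p+1$ the matching needs no Lemma~\ref{le:CJM} at all: $\alpha/R_v$ and $\alpha^-/R_{v-1}$ (resp.\ $\alpha/P_v^{(j)}$ and $\alpha^-/P_{v-1}^{(j)}$) are the same skew diagram up to an empty row. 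Your planned deletion at row $p+1$ can in fact fail: for $\alpha=(d,d)$, $v=1$, $\mu=(d)$, the only candidate is $\mu_1+(R_1)_1=2d\neq d$.

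The genuine gap is Step~2, which carries the whole content of the theorem and for which you give no working argument.

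For the $v=p$ terms, set $u=t-p$ and assume $\beta\subseteq R_u$ (otherwise $B_{u,p}(\alpha)=0$). Your rewriting $B_{u,p}(\alpha)=\bigangle{s_{\kappa/(d')},s_{\alpha^-/R_p}}$ is correct. But the comparison with $A_{u,p}(\alpha)=\bigangle{s_\tau s_{d\cup\eta},s_{(d\cup\kappa)/(d')}}$ is left to an unspecified tableau injection. The missing idea is a width squeeze. Every $s_\lambda$ in $s_\tau s_{d\cup\eta}$ has $\lambda_1\ge d$, and every $s_\delta$ in $s_{(d\cup\kappa)/(d')}$ has $\delta_1\le d$, so only first part exactly $d$ contributes. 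Those components are $\Delta_d(\pi_d(s_\tau s_\eta))$ by \eqref{eq:proj-behavior} and $\Delta_d(s_{\kappa/(d')})$ by the skew Pieri rule. Since $\Delta_d$ preserves the Hall pairing, one gets the \emph{equality} $A_{u,p}(\alpha)=B_{u,p}(\alpha)$.

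For the $v=p-1$ term, set $u=t-p+1$. The mechanism you propose, a Lemma~\ref{le:CJM} deletion of the part $j$ of $P_{p-1}^{(j)}$, cannot land on $B_{t-p,p-1}(\alpha^-)$. That target still contains $s_{P_{p-1}^{(j)}}$; what changes is $u\mapsto u-1$, so a part $d$ must come out of the rotated complement $\kappa$ of $\beta$ in $R_u$. The needed observation is that $B_{u,p-1}(\alpha)=c^{\alpha}_{\kappa,P_{p-1}^{(j)}}>0$ forces, via \eqref{eq:LR-facts-ineq} at row $p+1$, $d=\alpha_{p+1}\le\kappa_1+0\le d$. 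Hence $\kappa_1=d$ and $\beta_u=0$. Lemma~\ref{le:CJM}, applied to $\alpha_{p+1}$, $\kappa_1$ and the zero part $(P_{p-1}^{(j)})_{p+1}$, then gives $B_{u,p-1}(\alpha)=B_{u-1,p-1}(\alpha^-)$ whenever the left side is nonzero. That is precisely the absorption you need.
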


Before proceeding with the proof of this theorem, we establish the following relations involving $A_{u,v}(\alpha)$ and $B_{u,v}(\alpha)$.

\begin{lemma}
	We have
	\begin{align}\label{eq:A-alpha}
		A_{u,v}(\alpha) = \begin{cases}
			A_{u,v-1}(\alpha^-), & \text{if $v\ge p+1$},\\
			B_{u,v}(\alpha), & \text{if $v=p$}.
		\end{cases}
	\end{align}
	Also,
	\begin{align}\label{eq:B-alpha}
		B_{u,v}(\alpha) = \begin{cases}
			B_{u,v-1}(\alpha^-), & \text{if $v\ge p+1$},\\
			A_{u,v}(\alpha), & \text{if $v=p$},
		\end{cases}
	\end{align}
	and
	\begin{align}\label{eq:B-alpha-ineq}
		B_{u,p-1}(\alpha) =\begin{cases}
			B_{u-1,p-1}(\alpha^-), & \text{if $B_{u,p-1}(\alpha)>0$},\\
			0, & \text{otherwise}.
		\end{cases}
	\end{align}
\end{lemma}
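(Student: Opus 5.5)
The plan is to rewrite every coefficient through \eqref{eq:Hall-F}, so that the skew function lands on $\alpha$ or $\alpha^-$:
\begin{align*}
A_{u,v}(\lambda)=\bigangle{s_{P_u^{(j)}/\beta},s_{\lambda/R_v}},\qquad B_{u,v}(\lambda)=\bigangle{s_{R_u/\beta},s_{\lambda/P_v^{(j)}}}.
\end{align*}
The left entries are supported on partitions of width at most $d$. When $\beta\subseteq R_u$ they are $s_{(d\cup\kappa)/(d')}$ and $s_\kappa$ by Lemma~\ref{le:rotation}; otherwise everything vanishes. So in each right entry we may replace the function by anything congruent to it modulo $I_d$, i.e.\ work with $\pi_d(\cdot)$.

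\emph{Case $v=p$.} As in the proof of Lemma~\ref{le:Hall-alpha}, $\alpha/R_p$ splits into two non-interacting pieces, so $s_{\alpha/R_p}=s_\tau s_{d\cup\eta}=s_\tau\,\Delta_d(s_\eta)$.

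For $B$, Lemma~\ref{le:Hall-alpha} applied with $\lambda=\kappa$ (whose width is at most $d$) gives
\begin{align*}
B_{u,p}(\alpha)=\bigangle{s_{\kappa/(d')},s_{\tau}s_\eta}=\bigangle{s_\kappa,\pi_d(h_{d'}s_\tau s_\eta)}.
\end{align*}

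For $A$, we have
\begin{align*}
A_{u,p}(\alpha)=\bigangle{s_{d\cup\kappa},h_{d'}s_\tau\Delta_d(s_\eta)}=\bigangle{s_{d\cup\kappa},\pi_d(h_{d'}s_\tau\Delta_d(s_\eta))}.
\end{align*}
By \eqref{eq:proj-behavior} the last projection equals $\Delta_d(\pi_d(h_{d'}s_\tau s_\eta))$. Since $\Delta_d$ maps $s_\lambda$ to $s_{d\cup\lambda}$ injectively on width-$\le d$ partitions, pairing with $s_{d\cup\kappa}$ extracts the coefficient of $s_\kappa$. This gives $A_{u,p}(\alpha)=B_{u,p}(\alpha)$, which is the $v=p$ line of both \eqref{eq:A-alpha} and \eqref{eq:B-alpha}.

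\emph{Case $v\ge p+1$.} Here I would show that deleting one part $d$ commutes with taking $s_{\alpha/R_v}$ modulo $I_d$:
\begin{align*}
\pi_d\big(s_{\alpha/R_v}\big)=\Delta_d\big(\pi_d(s_{\alpha^-/R_{v-1}})\big).
\end{align*}
Then pairing with $s_{d\cup\kappa}$ against $s_{P_u^{(j)}/\beta}=s_{(d\cup\kappa)/(d')}$ should reduce, via the same $h_{d'}$ bookkeeping as above, to $A_{u,v-1}(\alpha^-)$.

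To prove the displayed congruence, I would use Lemma~\ref{le:CJM} on $c^{\alpha}_{\mu,R_v}$ for $\mu_1\le d$. We need $\alpha_k=\mu_i+d$ with $i+j=k+1$ and $j\le v$. The natural choice is $k=v+\ell$ with $\mu_{\ell+1}=0$, matched against a row of $\alpha$ equal to $d$. So the key step is the claim that a Littlewood--Richardson tableau of shape $\alpha/\mu$ and type $(d^v)$ forces the row of $\alpha$ carrying the displayed $d$ to sit at a place where the King--Tollu--Toumazet hypothesis holds. This uses Lemma~\ref{le:LR-facts}, i.e.\ rows $>p$ of $\alpha$ have length at most $\mu_1\le d$.

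If this index matching proves awkward, the fallback is a direct bijection on LR tableaux. Delete the row of length $d$ and the $v$-th letter layer, in the spirit of the proof of Lemma~\ref{le:Schur-rectangular-1}. The same argument with $P_v^{(j)}$ in place of $R_v$, where the rectangle part still equals $d$ since $v\ge p+1$, yields the $B$ line.

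\emph{Relation \eqref{eq:B-alpha-ineq}.} Now $v=p-1$, so Lemma~\ref{le:AB=0} is at its boundary. Positivity of $B_{u,p-1}(\alpha)$ forces the LR filling of $\alpha/P_{p-1}^{(j)}$ to be extremal: the part $j$ must sit in row $p$, which has length $d+\tau_p$. Then Lemma~\ref{le:CJM} applies, with the row $\alpha_{p+1}=d$ matched to a full $d$-row of $\kappa$ (equivalently, one row of $R_u$). This removes one row of $R_u$ and the part $d$ of $\alpha$, giving $B_{u-1,p-1}(\alpha^-)$.

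I expect the main obstacle to be exactly these index verifications for Lemma~\ref{le:CJM}, namely checking $\alpha_k=\mu_i+\nu_j$ with $i+j=k+1$ for every contributing $\mu$. The $v=p$ case, by contrast, should go through cleanly with the projection machinery.
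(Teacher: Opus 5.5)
Your argument for the lines $v\ge p+1$ of \eqref{eq:A-alpha} and \eqref{eq:B-alpha} has a genuine gap. The identity you propose to prove, $\pi_d(s_{\alpha/R_v})=\Delta_d\big(\pi_d(s_{\alpha^-/R_{v-1}})\big)$, is false. Both skew shapes have $|\alpha|-dv$ cells, while $\Delta_d$ raises degree by $d$; for $p=0$, $\alpha=(d)$, $v=1$ the two sides are $1$ and $s_{(d)}$. Nor can Lemma~\ref{le:CJM} give the correct statement $c^{\alpha}_{\mu,R_v}=c^{\alpha^-}_{\mu,R_{v-1}}$ uniformly in $\mu$. For $d\ge 2$, $\alpha=(d+1,d,1)$, $v=2$, $\mu=(1,1)$ one has $c^{\alpha}_{\mu,R_2}=1$. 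But the only row with $\alpha_k=d$ is $k=2$, and for both index pairs with $i+j=3$ one gets $\mu_i+(R_2)_j=d+1\neq d$, so the hypothesis of Lemma~\ref{le:CJM} fails. Your fallback bijection is not specified.

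The missing observation is elementary. For $v\ge p+1$ the rows $p+1,\dots,v$ of $\alpha/R_v$ are empty. The cells above them lie in columns $>d$ and those below lie in columns $\le d$. So $\alpha/R_v$ and $\alpha^-/R_{v-1}$ are the same skew diagram after deleting an empty row, and the same holds for $\alpha/P_v^{(j)}$ and $\alpha^-/P_{v-1}^{(j)}$. Thus $s_{\alpha/R_v}=s_{\alpha^-/R_{v-1}}$ and $s_{\alpha/P_v^{(j)}}=s_{\alpha^-/P_{v-1}^{(j)}}$, and \eqref{eq:Hall-F} gives both lines at once, with no projections or Littlewood--Richardson analysis.

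For $v=p$ with $\beta\subseteq R_u$, your computation is correct and slightly more economical than the paper's. You move $h_{d'}$ across and apply \eqref{eq:proj-behavior} with $F=h_{d'}s_\tau$, whereas the paper splits both $s_{\alpha/R_p}$ and $s_{(d\cup\kappa)/(d')}$ according to whether the first part equals $d$. However, your remark that ``otherwise everything vanishes'' is wrong. If $\beta\subseteq P_u^{(j)}$ but $\beta\not\subseteq R_u$, then $B_{u,p}(\alpha)=0$, but $s_{P_u^{(j)}/\beta}=s_{\kappa'/(d')}\neq 0$, where $\kappa'$ is the complement of $\beta$ in $R_{u+1}$. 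You must still show $A_{u,p}(\alpha)=0$. This holds because $\kappa'_1\le d-1$, whereas every Schur constituent of $s_{\alpha/R_p}=s_\tau s_{d\cup\eta}$ contains $d\cup\eta$ and so has first part at least $d$.

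For \eqref{eq:B-alpha-ineq} your plan is the paper's, but the decisive step should be stated rather than described as an ``extremal filling''. With $B_{u,p-1}(\alpha)=c^{\alpha}_{\kappa,P_{p-1}^{(j)}}$, inequality \eqref{eq:LR-facts-ineq} at row $p+1$ gives $d=\alpha_{p+1}\le\kappa_1$. Hence $\kappa_1=d$, and $\alpha_{p+1}=\kappa_1+(P_{p-1}^{(j)})_{p+1}$ is an admissible instance of Lemma~\ref{le:CJM} with indices $1$, $p+1$, $p+1$. Moreover, $\kappa_1=d$ means $\beta_u=0$, so $\kappa$ with its first part removed is the complement of $\beta$ in $R_{u-1}$. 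That is what identifies the result with $B_{u-1,p-1}(\alpha^-)$.
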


\begin{proof}
	We first consider the case $v\ge p+1$. In this situation, the skew diagrams $\alpha/R_v$ and $\alpha^-/R_{v-1}$ are the same. Using \eqref{eq:Hall-F} twice, we have
	\begin{align*}
		A_{u,v}(\alpha) = \bigangle{s_{\alpha/R_v},s_{P_u^{(j)}/\beta}} = \bigangle{s_{\alpha^-/R_{v-1}},s_{P_u^{(j)}/\beta}} = A_{u,v-1}(\alpha^-).
	\end{align*}
	For $B_{u,v}(\alpha)$, we only need the fact that the skew diagrams $\alpha/P_v^{(j)}$ and $\alpha^-/P_{v-1}^{(j)}$ are the same when $v\ge p+1$.
	
	Next, we look at the case $v=p$. First, assume $\beta\not\subseteq R_u$. Then $B_{u,p}(\alpha)=0$ because $R_u/\beta$ is not a valid skew diagram. If we further have $\beta\not\subseteq P_u^{(j)}$, then it is also true that $A_{u,p}(\alpha)=0$. If $\beta\subseteq P_u^{(j)}$, then $\len(\beta)=u+1$ and $1\le \beta_{u+1}\le j$. Let $\kappa' := (d-\beta_{u+1},\ldots,d-\beta_1)$ be the rotated completement of $\beta$ in $R_{u+1}$, so that $\kappa'_1<d$. The same arguments as we show \eqref{eq:rotation} give $s_{P_u^{(j)}/\beta} = s_{\kappa'/(d')}$. Similar to \eqref{eq:A-expression}, we have
	\begin{align*}
		A_{u,p}(\alpha) = \sum_{\mu} c_{\mu,(d')}^{\kappa'} c_{\mu,R_p}^{\alpha}.
	\end{align*}
	If $A_{u,p}(\alpha)>0$, then there is a certain $\mu$ such that $c_{\mu,(d')}^{\kappa'}>0$ and $c_{\mu,R_p}^{\alpha}>0$, which, as in the proof of Lemma~\ref{le:AB=0}, imply that
	\begin{align*}
		\alpha_{p+1} \le \mu_1 + (R_p)_{p+1} = \mu_1 \le \kappa'_1 < d.
	\end{align*}
	However, this is impossible because $\alpha_{p+1}=d$. Thus, we still have $A_{u,p}(\alpha)=0$. The above arguments tell us that when $\beta\not\subseteq R_u$,
	\begin{align*}
		A_{u,p}(\alpha)=B_{u,p}(\alpha)=0.
	\end{align*}
	Now assume $\beta\subseteq R_u$. Let $\kappa$ be as in Lemma~\ref{le:rotation} and put $\kappa^+:=d\cup \kappa$. In light of \eqref{eq:Hall-F} and \eqref{eq:rotation},
	\begin{align*}
		A_{u,p}(\alpha) = \bigangle{s_\alpha,s_{P_u^{(j)}/\beta}s_{R_p}} = \bigangle{s_{\alpha/R_p},s_{P_u^{(j)}/\beta}} = \bigangle{s_{\alpha/R_p},s_{\kappa^+/(d')}}.
	\end{align*}
	In the way as we show \eqref{eq:alpha&P}, it is true that
	\begin{align*}
		s_{\alpha/R_p} = s_{\tau} s_{d\cup\eta} = \sum_{\lambda} c_{\tau,d\cup\eta}^{\lambda} s_\lambda.
	\end{align*}
	By the Littlewood--Richardson rule, we may impose the restriction $d\cup \eta\subseteq \lambda$ to $\lambda$ so that $\lambda_1\ge d$. Thus,
	\begin{align*}
		s_{\alpha/R_p} = \sum_{\lambda\colon \lambda_1=d} c_{\tau,d\cup\eta}^{\lambda} s_\lambda + \sum_{\lambda\colon \lambda_1>d} c_{\tau,d\cup\eta}^{\lambda} s_\lambda.
	\end{align*}
	In particular, we have, with \eqref{eq:proj-behavior} used for the second equality and \eqref{eq:alpha-&R} used for the last equality,
	\begin{align*}
		\sum_{\lambda\colon \lambda_1=d} c_{\tau,d\cup\eta}^{\lambda} s_\lambda = \pi_d(s_{\tau} s_{d\cup\eta}) = \Delta_d\big(\pi_d(s_\tau s_\eta)\big) = \Delta_d\big(\pi_d(s_{\alpha^-/R_p})\big).
	\end{align*}
	In the meantime, by \eqref{eq:Pieri-skew},
	\begin{align*}
		s_{\kappa^+/(d')} = \sum_{\substack{\delta\colon \kappa^+/\delta\HS\\|\kappa^+|-|\delta|=d'}} s_\delta.
	\end{align*}
	Since $\kappa^+_1= d$, we may split the above as
	\begin{align*}
		s_{\kappa^+/(d')} = \sum_{\substack{\delta\colon \kappa^+/\delta\HS\\|\kappa^+|-|\delta|=d'\\\delta_1=d}} s_\delta + \sum_{\substack{\delta\colon \kappa^+/\delta\HS\\|\kappa^+|-|\delta|=d'\\\delta_1<d}} s_\delta.
	\end{align*}
	In particular, we have, with \eqref{eq:Pieri-skew} recalled,
	\begin{align*}
		\sum_{\substack{\delta\colon \kappa^+/\delta\HS\\|\kappa^+|-|\delta|=d'\\\delta_1=d}} s_\delta = \sum_{\substack{\delta'\colon \kappa/\delta'\HS\\|\kappa|-|\delta'|=d'}} s_{d\cup\delta'} = \Delta_d(s_{\kappa/(d')}).
	\end{align*}
	Thus,
	\begin{align*}
		A_{u,p}(\alpha) = \left\langle\sum_{\lambda\colon \lambda_1=d} c_{\tau,d\cup\eta}^{\lambda} s_\lambda,\sum_{\substack{\delta\colon \kappa^+/\delta\HS\\|\kappa^+|-|\delta|=d'\\\delta_1=d}} s_\delta\right\rangle = \bigangle{\Delta_d\big(\pi_d(s_{\alpha^-/R_p})\big),\Delta_d(s_{\kappa/(d')})}.
	\end{align*}
	In the above Hall inner product, it is safe to remove the operator $\Delta_d$ because for any two Schur polynomials $s_\mu$ and $s_\nu$, it is true that
	\begin{align*}
		\bigangle{s_\mu,s_\nu} = \bigangle{s_{d\cup\mu},s_{d\cup\nu}} = \bigangle{\Delta_d(s_\mu),\Delta_d(s_\nu)}.
	\end{align*}
	Also, we may remove the operator $\pi_d$ in the first term because if a certain $s_\delta$ appears in the Schur expansion of $s_{\kappa/(d')}$ in the second term, then by \eqref{eq:Pieri-skew}, we must have $\delta\subseteq \kappa$ so that $\delta_1\le \kappa_1\le d$. Therefore,
	\begin{align*}
		A_{u,p}(\alpha) = \bigangle{s_{\alpha^-/R_p},s_{\kappa/(d')}} = \bigangle{s_{\alpha/P_p^{(j)}},s_\kappa} = \bigangle{s_{\alpha/P_p^{(j)}},s_{R_u/\beta}},
	\end{align*}
	where we have used \eqref{eq:Hall-alpha} for the second equality and \eqref{eq:rotation} for the last equality. We then derive from \eqref{eq:Hall-F} the desired relation
	\begin{align*}
		A_{u,p}(\alpha) = B_{u,p}(\alpha).
	\end{align*}
	
	Finally, we work out the case $v=p-1$ for $B_{u,v}(\alpha)$, namely, the relation \eqref{eq:B-alpha-ineq}. If $p=0$, then there is nothing to prove. Now assume $p\ge 1$ so that $\alpha \ne P_{p-1}^{(j)}$. If $u=0$, then $B_{0,p-1}(\alpha)=\bigangle{s_\alpha,s_{P_{p-1}^{(j)}}}=0$. Now we further assume $u\ge 1$. Here we only need to look at the case $B_{u,p-1}(\alpha)>0$, which requires $\beta\subseteq R_u$. Let $\kappa$ be as in Lemma~\ref{le:rotation}. In light of \eqref{eq:rotation} and \eqref{eq:Schur-c-1},
	\begin{align*}
		B_{u,p-1}(\alpha) = \bigangle{s_\alpha,s_{R_u/\beta}s_{P_{p-1}^{(j)}}} = \bigangle{s_\alpha,s_\kappa s_{P_{p-1}^{(j)}}} = c_{\kappa,P_{p-1}^{(j)}}^{\alpha}.
	\end{align*}
	It follows from \eqref{eq:LR-facts-ineq} that
	\begin{align*}
		d = \alpha_{p+1} \le \kappa_1+(P_{p-1}^{(j)})_{p+1}=\kappa_1\le d.
	\end{align*}
	Therefore, $\kappa_1=d$. Remove this first part from $\kappa$ and call the resulting partition $\kappa^-$. Since
	\begin{align*}
		\alpha_{p+1} = d = d+0 = \kappa_1 + (P_{p-1}^{(j)})_{p+1},
	\end{align*}
	we derive from Lemma~\ref{le:CJM} that
	\begin{align*}
		B_{u,p-1}(\alpha) = c_{\kappa,P_{p-1}^{(j)}}^{\alpha} = c_{\kappa^-,P_{p-1}^{(j)}}^{\alpha^-}.
	\end{align*}
	Note that $\kappa_1=d$ implies that $\beta_u=0$ so that $\kappa^-$ is the rotated completement of $\beta$ in $R_{u-1}$. We then use \eqref{eq:rotation} and \eqref{eq:Schur-c-1} to get
	\begin{align*}
		B_{u-1,p-1}(\alpha^-) = \bigangle{s_{\alpha^-},s_{R_{u-1}/\beta}s_{P_{p-1}^{(j)}}} = \bigangle{s_{\alpha^-},s_{\kappa-} s_{P_{p-1}^{(j)}}} = c_{\kappa^-,P_{p-1}^{(j)}}^{\alpha^-}.
	\end{align*}
	Thus,
	\begin{align*}
		B_{u,p-1}(\alpha) = B_{u-1,p-1}(\alpha^-),
	\end{align*}
	as claimed.
\end{proof}

Now we complete the proof of Theorem~\ref{th:alpha-contain-d}.

\begin{proof}[Proof of Theorem~\ref{th:alpha-contain-d}]
	By the definitions in \eqref{eq:AB-def}, we have
	\begin{align*}
		[s_\alpha] \Phi_{t,j}^\beta = \sum_{v\ge 0} \big(A_{t-v,v}(\alpha) - B_{t-v,v}(\alpha)\big),
	\end{align*}
	and
	\begin{align*}
		[s_{\alpha^-}] \Phi_{t-1,j}^\beta = \sum_{v\ge 0} \big(A_{t-v,v-1}(\alpha^-) - B_{t-v,v-1}(\alpha^-)\big).
	\end{align*}
	Invoking \eqref{eq:AB=0}, \eqref{eq:A-alpha}, and \eqref{eq:B-alpha},
	\begin{align*}
		[s_\alpha] \Phi_{t,j}^\beta - [s_{\alpha^-}] \Phi_{t-1,j}^\beta = B_{t-p,p-1}(\alpha^-) - B_{t-p+1,p-1}(\alpha) \ge 0,
	\end{align*}
	where we have used \eqref{eq:B-alpha-ineq} for the nonnegativity. Thus, the desired inequality \eqref{eq:Phi-ineq-alpha-d} is true.
\end{proof}

\subsection{The partition $\alpha$ contains no part of size $d$}

Throughout this subsection, since $\alpha$ does not contain any part of size $d$, we always write it as
\begin{align}
	\alpha := (d+\tau_1,\ldots,d+\tau_p,\rho_1,\ldots,\rho_r),
\end{align}
where $\tau:=(\tau_1,\ldots,\tau_p)$ is a partition with exactly $p$ \emph{positive} parts, and $\rho:=(\rho_1,\ldots,\rho_r)$ is a partition with width $\rho_1< d$. For convenience, we put
\begin{align*}
	\tau^{(d)} := (d+\tau_1,\ldots,d+\tau_p).
\end{align*}
Define three auxiliary skew diagrams:
\begin{align*}
	D:=\alpha/P_{p-1}^{(j)},\qquad T:=\tau^{(d)}/P_{p-1}^{(j)},\qquad L:=((d-1)\cup\rho)/(j-1).
\end{align*}

We first prove two useful results about the skew diagrams $L$ and $T$.

\begin{lemma}
	We have
	\begin{align}\label{eq:s_L}
		\pi_d(h_{d'}s_\rho-s_L) = \Delta_d(s_{\rho/(j)}).
	\end{align}
\end{lemma}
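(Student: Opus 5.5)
The plan is to expand both $h_{d'}s_\rho$ and $s_L$ by the Pieri rules, match their terms, and show that the surviving terms are exactly those listed by the skew Pieri rule for $s_{\rho/(j)}$, shifted by $\Delta_d$. Recall that $\rho_1<d$ and $1\le j\le d-1$, so $d'=d-j\ge 1$ and $j-1\ge 0$.

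\textbf{Step 1: expand both terms.} By \eqref{eq:Pieri-product},
\begin{align*}
	h_{d'}s_\rho=\sum_{\substack{\xi\colon \xi/\rho\HS\\|\xi|-|\rho|=d'}} s_\xi .
\end{align*}
Since $\rho_1\le d-1$, we have $(d-1)\cup\rho=(d-1,\rho_1,\rho_2,\ldots)$, and $|(d-1)\cup\rho|-(j-1)=|\rho|+d'$. So \eqref{eq:Pieri-skew} gives $s_L$ as the sum of $s_\xi$ over all $\xi$ with $((d-1)\cup\rho)/\xi$ a horizontal strip and $|\xi|-|\rho|=d'$. This also covers $j=1$, where the skew Pieri rule with $r=0$ returns the single term $s_{(d-1)\cup\rho}$.

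\textbf{Step 2: compare index sets.} By \eqref{eq:HS-condition}, the horizontal-strip condition on $((d-1)\cup\rho)/\xi$ reads
\begin{align*}
	d-1\ge\xi_1\ge\rho_1\ge\xi_2\ge\rho_2\ge\cdots.
\end{align*}
By \eqref{eq:HS-condition} again, this says precisely that $\xi/\rho$ is a horizontal strip with $\xi_1\le d-1$. Hence $s_L$ is exactly the part of $h_{d'}s_\rho$ with $\xi_1\le d-1$. Subtracting,
\begin{align*}
	h_{d'}s_\rho-s_L=\sum_{\substack{\xi\colon \xi/\rho\HS\\|\xi|-|\rho|=d'\\ \xi_1\ge d}} s_\xi .
\end{align*}
Applying $\pi_d$ then keeps only the terms with $\xi_1=d$.

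\textbf{Step 3: identify the survivors.} Write such a $\xi$ as $\xi=(d,\delta_1,\delta_2,\ldots)$ with $\delta_i=\xi_{i+1}$. The interlacing $d\ge\rho_1\ge\delta_1\ge\rho_2\ge\delta_2\ge\cdots$ is equivalent to $\rho/\delta$ being a horizontal strip; the inequality $d\ge\rho_1$ holds automatically since $\rho_1<d$. The size condition $|\xi|-|\rho|=d'$ becomes $|\rho|-|\delta|=j$. Conversely, any such $\delta$ has $\delta_1\le\rho_1<d$, so $d\cup\delta=(d,\delta_1,\ldots)$ is a valid $\xi$. This gives a bijection $\delta\leftrightarrow\xi=d\cup\delta$. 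Therefore
\begin{align*}
	\pi_d(h_{d'}s_\rho-s_L)=\sum_{\substack{\delta\colon \rho/\delta\HS\\|\rho|-|\delta|=j}} s_{d\cup\delta}=\Delta_d\big(s_{\rho/(j)}\big),
\end{align*}
where the last equality uses \eqref{eq:Pieri-skew} and the linearity of $\Delta_d$.

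\textbf{Main obstacle.} The only delicate point is the bookkeeping in Step 2. One must use $\rho_1<d$ to write $(d-1)\cup\rho$ with $d-1$ as its first part, so that the interlacing from \eqref{eq:HS-condition} matches that of $\xi/\rho$ up to the single extra constraint $\xi_1\le d-1$.
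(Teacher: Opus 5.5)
Your proof is correct and follows essentially the same route as the paper: you expand $h_{d'}s_\rho$ and $s_L$ by the two Pieri rules, use the interlacing condition \eqref{eq:HS-condition} to see that $s_L$ is exactly the part with $\xi_1\le d-1$, and identify the surviving $\xi_1=d$ terms with $d\cup\delta$ for $\rho/\delta$ a horizontal strip of size $j$, before applying \eqref{eq:Pieri-skew}. You also spell out details that the paper leaves implicit, namely the $j=1$ case, the role of $\rho_1<d$, and the bijection $\delta\leftrightarrow d\cup\delta$.
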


\begin{proof}
	Similar arguments as in the proof of Lemma~\ref{le:Hall-alpha} tell us that
	\begin{align*}
		h_{d'}s_\rho = \sum_{\substack{\xi\colon \xi/\rho\HS\\|\xi|-|\rho|=d'}} s_\xi,\qquad\qquad s_L = \sum_{\substack{\xi\colon \xi/\rho\HS\\|\xi|-|\rho|=d'\\\xi_1\le d-1}} s_\xi,
	\end{align*}
	so that
	\begin{align*}
		\pi_d(h_{d'}s_\rho-s_L) = \sum_{\substack{\xi\colon \xi/\rho\HS\\|\xi|-|\rho|=d'\\\xi_1= d}} s_\xi = \sum_{\substack{\xi'\colon \rho/\xi'\HS\\|\rho|-|\xi'|=j}} s_{d\cup \xi'},
	\end{align*}
	where we have written $\xi = d\cup \xi'$ and applied the condition \eqref{eq:HS-condition} for the second equality. The claimed identity then follows from \eqref{eq:Pieri-skew}.
\end{proof}

\begin{lemma}
	We have
	\begin{align}\label{eq:s_T}
		h_{d'}s_\tau - s_T = \sum_{\substack{\zeta\colon \zeta/\tau\HS\\|\zeta|-|\tau|=d'\\\len(\zeta)=p+1}} s_\zeta.
	\end{align}
\end{lemma}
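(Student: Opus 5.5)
We will compute $\langle s_\zeta, s_T\rangle$ for every partition $\zeta$ and compare the result with the Pieri expansion \eqref{eq:Pieri-product} of $h_{d'}s_\tau$.

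\textbf{Step 1: recognise $T$ as a translated skew diagram.} Recall $\tau^{(d)}=(d+\tau_1,\ldots,d+\tau_p)$ and $P_{p-1}^{(j)}=(d^{p-1},j)$. The diagram $T=\tau^{(d)}/P_{p-1}^{(j)}$ has the following rows:
\begin{itemize}[leftmargin=*]
	\item for $i\le p-1$, row $i$ occupies columns $d+1,\ldots,d+\tau_i$;
	\item row $p$ occupies columns $j+1,\ldots,d+\tau_p$.
\end{itemize}
Columns $1,\ldots,j$ are empty, so we shift everything $j$ columns to the left. The cells and their row and column relations are unchanged, so
\begin{align*}
	s_T=s_{\lambda/\mu},\qquad \lambda:=\tau+(d'^p)=(\tau_1+d',\ldots,\tau_p+d'),\qquad \mu:=(d'^{p-1}).
\end{align*}
By \eqref{eq:Schur-c-3}, $\langle s_\zeta,s_T\rangle=c^{\lambda}_{\mu,\zeta}$. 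By Lemma~\ref{le:LR-facts}, this vanishes unless $\zeta\subseteq\lambda$. In particular only $\zeta$ with $\len(\zeta)\le p$ can contribute.

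\textbf{Step 2: pass to $p$ variables.} Every partition in play ($\lambda$, $\mu$, $\zeta$, $\tau$) now has at most $p$ parts. Hence the Littlewood--Richardson coefficients among them can be read off from products of Schur polynomials in $X_p$, because the polynomials $s_\nu(X_p)$ with $\len(\nu)\le p$ are linearly independent. The key identity in $p$ variables is
\begin{align*}
	s_{(d'^{p-1})}(X_p)=(x_1\cdots x_p)^{d'}\,h_{d'}(x_1^{-1},\ldots,x_p^{-1}),
\end{align*}
which is the complementation of $(d'^{p-1})$ inside the $p\times d'$ box. Also $s_\lambda(X_p)=(x_1\cdots x_p)^{d'}s_\tau(X_p)$. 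It follows that $c^\lambda_{\mu,\zeta}$ equals the coefficient of $s_\tau(X_p)$ in $s_\zeta(X_p)\,h_{d'}(X_p^{-1})$.

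\textbf{Step 3: dual Pieri rule.} Multiplication by $h_{d'}(X_p^{-1})$ removes horizontal strips of size $d'$ from $\zeta$. To see this, write $s_\zeta(X_p)=(x_1\cdots x_p)^{N}s_{\zeta^c}(X_p^{-1})$ with $\zeta^c$ the complement of $\zeta$ in a $p\times N$ box, and then apply \eqref{eq:Pieri-product} to $\zeta^c$. Complementation turns "add a horizontal strip to $\zeta^c$" into "remove a horizontal strip from $\zeta$". Consequently the coefficient of $s_\tau$ equals $1$ if $\zeta/\tau$ is a horizontal strip with $|\zeta|-|\tau|=d'$, and $0$ otherwise.

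\textbf{Conclusion.} Combining the steps,
\begin{align*}
	s_T=\sum_{\substack{\zeta\colon \zeta/\tau\HS\\|\zeta|-|\tau|=d'\\\len(\zeta)\le p}} s_\zeta.
\end{align*}
On the other hand, \eqref{eq:Pieri-product} expands $h_{d'}s_\tau$ over all horizontal strips $\zeta/\tau$ of size $d'$. Since $\len(\tau)=p$, any such $\zeta$ has length $p$ or $p+1$. Subtracting the two expansions leaves exactly the terms with $\len(\zeta)=p+1$, which is \eqref{eq:s_T}.

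The main obstacle is Step 2: justifying that the Littlewood--Richardson coefficients can be computed in $p$ variables, and carrying out the box complementation cleanly. If this becomes awkward, the fallback is to verify $c^{\lambda}_{\mu,\zeta}\in\{0,1\}$ directly with the Littlewood--Richardson rule. The LR tableaux of shape $\lambda/\zeta$ and rectangular type $(d'^{p-1})$ are forced row by row by Lemma~\ref{le:LR-facts}, and one then checks that such a tableau exists precisely under the horizontal-strip interlacing \eqref{eq:HS-condition}.
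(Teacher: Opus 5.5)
Your proof is correct, but it reaches the expansion of $s_T$ by a different route than the paper.

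\textbf{The paper's route.} The paper never translates $T$. It expands $s_T=\sum_\zeta c^{\tau^{(d)}}_{P_{p-1}^{(j)},\zeta}s_\zeta$ and counts Littlewood--Richardson tableaux of shape $T$ and type $\zeta$ by hand. By Lemma~\ref{le:LR-facts} and column strictness, the cells in columns beyond $d$ are forced: row $i$ is filled entirely with $i$, which also forces $\zeta$ to have at most $p$ parts. The $d'$ cells in columns $j+1,\ldots,d$ of row $p$ must then read $1^{\zeta_1-\tau_1}\cdots p^{\zeta_p-\tau_p}$. The lattice condition on the reverse row word yields $\tau_{i-1}\ge\zeta_i\ge\tau_i$. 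This is essentially your fallback.

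\textbf{Your route.} You rewrite $T$ as $(\tau+(d'^p))/(d'^{p-1})$ and pass to $p$ variables. The complementation identities then let you recognise $c^{\lambda}_{\mu,\zeta}$ as a dual Pieri coefficient. This explains structurally why exactly the horizontal strips with at most $p$ rows survive. The cost is leaving $\Lambda$ for symmetric Laurent polynomials. The paper's argument is more elementary and reuses only the tableau facts it has already set up.

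\textbf{A point to make explicit in Step 3.} The identity $(x_1\cdots x_p)^N s_\xi(X_p^{-1})=s_{\xi^c}(X_p)$ requires $\xi_1\le N$. The Pieri rule applied to $\zeta^c$ allows $\xi_1$ up to $N-\zeta_p+d'$, so when $\zeta_p<d'$ some terms violate this for every choice of $N$. Those terms are rational Schur functions with a negative last index, so they never coincide with $s_\tau$, and your coefficient count is right. Your Step 2 already reads off a coefficient inside a Laurent polynomial, so you should say you are working in the basis of rational Schur functions of the symmetric Laurent polynomials in $X_p$.

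Alternatively, you can stay entirely with polynomials. Multiply through by $(x_1\cdots x_p)^{d'}$ and complement in a $p\times(N+d')$ box. Then every Pieri term has $\xi_1\le N+d'$, and the coefficient of $s_{\tau+(d'^p)}$ is $1$ exactly when $\xi=\tau^c$, i.e.\ when $\zeta/\tau$ is a horizontal strip of size $d'$.
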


\begin{proof}
	First, we know from \eqref{eq:Pieri-product} that
	\begin{align*}
		h_{d'}s_\tau = \sum_{\substack{\zeta\colon \zeta/\tau\HS\\|\zeta|-|\tau|=d'\\\len(\zeta)\le p+1}} s_\zeta,
	\end{align*}
	where we require $\len(\zeta)\le p+1$ because otherwise there are at least two cells in the first column of the horizontal strip $\zeta/\tau$, which is impossible. Next, by \eqref{eq:Schur-c-3},
	\begin{align*}
		s_T = \sum_{\zeta\colon \zeta\subseteq\tau^{(d)}} c_{P_{p-1}^{(j)},\zeta}^{\tau^{(d)}} s_\zeta.
	\end{align*}
	To ensure the positivity of the Littlewood--Richardson coefficients in the above, we must have $\len(\zeta)\le p$ and
	\begin{align*}
		|\zeta| = |\tau^{(d)}| - |P_{p-1}^{(j)}| = |\tau| + d'.
	\end{align*}
	Note also that the skew diagram $\tau^{(d)}/P_{p-1}^{(j)}$ can be split into two disjoint groups, one of shape $\tau$ to the right of the rectangle $R_p$, and the other consisting of $d'$ cells from the $(j+1)$-th column to the $d$-th column in the $p$-th row. Moreover, in a Littlewood--Richardson tableau of this shape and type $\zeta$, both groups are uniquely filled out: for the first group, according to the first assertion in Lemma~\ref{le:LR-facts} together with the column strictness, it is only possible to fill all entries in the $i$-th row with $i$ for each $i$ where $1\le i\le p$; for the second group, the row weak monotonicity and the type $\zeta$ of the tableau require that the cells are filled with $1$ of multiplicity $\zeta_1-\tau_1$, followed by $2$ of multiplicity $\zeta_2-\tau_2$, and so forth, till $p$ of multiplicity $\zeta_p-\tau_p$. In particular, by the Littlewood--Richardson condition, we see that for each $i$,
	\begin{align*}
		\tau_{i-1}-\tau_i\ge \zeta_i-\tau_i\ge 0,
	\end{align*}
	where we put $\tau_0 = \infty$. Thus,
	\begin{align*}
		\zeta_1\ge \tau_1\ge \zeta_2\ge \tau_2\ge \cdots,
	\end{align*}
	meaning that $\zeta/\tau$ is a horizontal strip by \eqref{eq:HS-condition}. To summarize the above discussions, we have
	\begin{align*}
		c_{P_{p-1}^{(j)},\zeta}^{\tau^{(d)}} = \begin{cases}
			1, & \text{if $\zeta/\tau\HS$, $|\zeta|-|\tau|=d'$, and $\len(\zeta)\le p$},\\
			0, & \text{otherwise}.
		\end{cases}
	\end{align*}
	It follows that
	\begin{align*}
		s_T = \sum_{\substack{\zeta\colon \zeta/\tau\HS\\|\zeta|-|\tau|=d'\\\len(\zeta)\le p}} s_\zeta,
	\end{align*}
	which gives \eqref{eq:s_T} by invoking the expression of $h_{d'}s_\tau$ derived at the beginning of this proof.
\end{proof}

In the next two lemmas, we shall use the  \emph{dual Jacobi--Trudi formula}~\cite[p.~344, Corollary~7.16.2]{Sta1999}, which states that for two partitions $\lambda$ and $\mu$ with length at most $l$ such that $\mu\subseteq \lambda$,
\begin{align}\label{dq:dualJT}
	s_{(\lambda/\mu)^\trans}(X_N) = \underset{1\le a,b\le l}{\det}\big(e_{\lambda_a-\mu_b-a+b}(X_N)\big).
\end{align}

\begin{lemma}
	Let $N=d$ or $d-1$. We have
	\begin{align}\label{eq:s_D}
		\pi_N(s_D) = \pi_N(s_\rho s_T).
	\end{align}
\end{lemma}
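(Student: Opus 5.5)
The plan is to transfer the identity to symmetric polynomials in $N$ variables, where it becomes a determinant factorization. Both sides of \eqref{eq:s_D} lie in the span of $s_\lambda$ with $\lambda_1\le N$, and $\Spomega_N$ is injective there by Lemma~\ref{le:spt-proj}. So it suffices to show $\Spomega_N(\pi_N(s_D))=\Spomega_N(\pi_N(s_\rho s_T))$. By \eqref{eq:spt-proj} and \eqref{eq:spt-product}, this reduces to
\begin{align*}
	\Spomega_N(s_D) = \Spomega_N(s_\rho)\,\Spomega_N(s_T).
\end{align*}
By \eqref{eq:omega-skew}, each side is a skew Schur polynomial in $X_N$ indexed by a conjugate shape, so the dual Jacobi--Trudi formula \eqref{dq:dualJT} applies to each factor.

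The main step is to write $\Spomega_N(s_D)=s_{D^\trans}(X_N)$ as the determinant $\det\big(e_{\alpha_a-\mu_b-a+b}(X_N)\big)_{1\le a,b\le p+r}$. Here $\mu$ is $P_{p-1}^{(j)}$ padded with zeros to length $l=p+r$, so $\mu_b=d$ for $b\le p-1$, $\mu_p=j$, and $\mu_b=0$ for $b>p$. I will split the index set into $\{1,\ldots,p\}$ and $\{p+1,\ldots,p+r\}$ and show that the upper-right block vanishes. For $a\le p<b$, the entry is $e_{d+\tau_a-a+b}(X_N)$. Since $b-a\ge 1$ and $\tau_a\ge 1$, its index is at least $d+2>N$ (for both $N=d$ and $N=d-1$), so the entry is $0$ in $N$ variables. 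The matrix is therefore block lower triangular, and the determinant is the product of the two diagonal blocks. The lower-left block does not matter.

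Next I identify the two diagonal blocks. The upper-left block is $\big(e_{(\tau^{(d)})_a-(P_{p-1}^{(j)})_b-a+b}(X_N)\big)_{1\le a,b\le p}$. Since both $\tau^{(d)}$ and $P_{p-1}^{(j)}$ have length at most $p$, \eqref{dq:dualJT} identifies this block with $s_{T^\trans}(X_N)=\Spomega_N(s_T)$. After the shift $a'=a-p$, $b'=b-p$, the lower-right block is $\big(e_{\rho_{a'}-a'+b'}(X_N)\big)_{1\le a',b'\le r}$, which by \eqref{dq:dualJT} with empty inner shape is $s_{\rho^\trans}(X_N)=\Spomega_N(s_\rho)$. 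Multiplying gives the displayed identity, and injectivity yields \eqref{eq:s_D}.

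The main obstacle is the careful bookkeeping in the block decomposition. I need to check that padding $P_{p-1}^{(j)}$ with zeros is legitimate in \eqref{dq:dualJT}; it is, because containment $P_{p-1}^{(j)}\subseteq\alpha$ holds since $\alpha_p=d+\tau_p\ge d$. I also need to confirm that the vanishing bound uses only $N\le d$, so that one argument covers both $N=d$ and $N=d-1$. The hypothesis $\rho_1<d$ is not even needed for this factorization. It matters only for the lower-left block, which the determinant ignores, so I would double-check that no hidden use of it is required.
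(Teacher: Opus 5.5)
Your proposal is correct and follows the paper's proof essentially step for step. You apply the dual Jacobi--Trudi formula to $D^\trans$ with $l=p+r$, and the upper-right block vanishes because $e_{d+\tau_a-a+b}(X_N)=0$ for $N\le d$. The diagonal blocks are then identified as $s_{T^\trans}(X_N)$ and $s_{\rho^\trans}(X_N)$, and the identity is lifted back through $\Spomega_N$ via \eqref{eq:spt-proj} and the injectivity in Lemma~\ref{le:spt-proj}. Your index $d+\tau_a-a+b$ is the correct one; the paper's display writes $a+\tau_a-a+b$, which is a typo.
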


\begin{proof}
	We apply \eqref{dq:dualJT} to $D^\trans = (\alpha/P_{p-1}^{(j)})^\trans$ with $l=p+r$. In particular, when $1\le a\le p$ and $p+1\le b\le p+r$, the entry in the dual Jacobi--Trudi matrix is $e_{a+\tau_a-a+b}(X_N)$ with the index $a+\tau_a-a+b > N$ so that $e_{a+\tau_a-a+b}(X_N)=0$. Now the dual Jacobi--Trudi matrix has the block decomposition
	\begin{align*}
		\begin{pmatrix}
			\mathbf{A} & \mathbf{0}\\
			\star & \mathbf{B}
		\end{pmatrix},
	\end{align*}
	where
	\begin{align*}
		\mathbf{A} := \big(e_{\tau^{(d)}_a-(P_{p-1}^{(j)})_b-a+b}(X_N)\big)_{1\le a,b\le p},\qquad \mathbf{B}:=\big(e_{\rho_{a'}-a'+b'}(X_N)\big)_{1\le a',b'\le r}.
	\end{align*}
	Thus,
	\begin{align*}
		s_{D^\trans}(X_N) = \det \mathbf{A} \det \mathbf{B} = s_{T^\trans}(X_N) s_{\rho^\trans}(X_N),
	\end{align*}
	where we have used \eqref{dq:dualJT} again for $\det\mathbf{A}$ and $\det\mathbf{B}$. In light of \eqref{eq:omega-product} and \eqref{eq:omega-skew}, we have
	\begin{align*}
		\Spomega_N(s_D) = \Spomega_N(s_\rho s_T),
	\end{align*}
	which, according to \eqref{eq:spt-proj}, further gives
	\begin{align*}
		\Spomega_N\big(\pi_N(s_D)\big) = \Spomega_N\big(\pi_N(s_\rho s_T)\big).
	\end{align*}
	Because of the injectivity of $\Spomega_N$ on the span of $s_\lambda$ with $\lambda_1\le N$ as stated in Lemma~\ref{le:spt-proj}, we conclude the desired relation.
\end{proof}

\begin{lemma}
	We have
	\begin{align}\label{eq:s_gamma/delta}
		s_{\tau^\trans}(X_d) s_{L^\trans}(X_d) - s_{\rho^\trans} s_{T^\trans}(X_d) = s_{(\gamma/\delta)^\trans}(X_d),
	\end{align}
	where
	\begin{align*}
		\gamma:= (d-2+\tau_1,\ldots,d-2+\tau_p,d-1,\rho_1,\ldots,\rho_r),
	\end{align*}
	and
	\begin{align*}
		\delta:=(\underbrace{d-2,\ldots,d-2}_{\text{$p$ parts}},j-1).
	\end{align*}
\end{lemma}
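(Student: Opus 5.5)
The plan is to prove \eqref{eq:s_gamma/delta} by expanding a single determinant. Apply the dual Jacobi--Trudi formula \eqref{dq:dualJT} to $(\gamma/\delta)^\trans$ with $l=p+1+r$ and $N=d$. Write $M=\big(e_{\gamma_a-\delta_b-a+b}(X_d)\big)_{1\le a,b\le l}$, so that $\det M = s_{(\gamma/\delta)^\trans}(X_d)$. Then split $\det M$ by a generalized Laplace expansion along the first $p$ rows, and identify the two surviving terms with the two products on the left-hand side.

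First I record the zero pattern of $M$, using $e_m(X_d)=0$ for $m>d$ or $m<0$.
\begin{itemize}
\item For $a\le p$ and $b\ge p+2$, the index is $\tau_a+d-2+(b-a)\ge \tau_a+d>d$. So rows $1,\ldots,p$ are supported in columns $1,\ldots,p+1$.
\item For row $p+1$ (where $\gamma_{p+1}=d-1$) and $b\le p$, the index is $b-p$, so the only nonzero entry among these columns is $e_0=1$, at $b=p$.
\item In row $p+1$, column $p+1$ carries $e_{d-1-(j-1)}=e_{d'}$, column $p+2$ carries $e_d$, and all later columns vanish.
\item For rows $a\ge p+2$ and columns $b\le p$, the index is at most $\rho_{a-p-1}-d+2-a+b<0$, since $\rho_1<d$ and $b-a\le -2$. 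So these entries vanish.
\end{itemize}

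Laplace expansion along rows $1,\ldots,p$ then runs over $p$-subsets of columns $\{1,\ldots,p+1\}$. Each term omits one column $c\le p+1$, and its complementary minor uses rows $p+1,\ldots,l$ with columns $\{c,p+2,\ldots,l\}$. Column $c$ restricted to rows $p+1,\ldots,l$ is zero unless $c\in\{p,p+1\}$, by the vanishing pattern above. So only two terms survive.

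\emph{The term $c=p+1$.} The top minor has entries $e_{\tau_a-a+b}$, which is $s_{\tau^\trans}(X_d)$. The bottom minor has first row $(e_{d'},e_d,0,\ldots)$, and the remaining rows have the entries of the dual Jacobi--Trudi matrix of $L^\trans=\big(((d-1)\cup\rho)/(j-1)\big)^\trans$. I check this index by index: the first column of $L$'s matrix is shifted by $j-1$, exactly as $\delta_{p+1}=j-1$. Hence this term is $+\,s_{\tau^\trans}(X_d)s_{L^\trans}(X_d)$, and its Laplace sign is $+1$ because the columns stay in natural order.

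\emph{The term $c=p$.} The top minor on columns $\{1,\ldots,p-1,p+1\}$ has entries $e_{\tau_a+d-2-(d-2)-a+b}$ for $b<p$ and $e_{\tau_a+d-j-a+p}$ in the last column. These are precisely the entries of the Jacobi--Trudi matrix of $T^\trans=(\tau^{(d)}/P_{p-1}^{(j)})^\trans$, so the top minor is $s_{T^\trans}(X_d)$. The bottom minor has a single $1$ in row $p+1$, column $p$, and zeros elsewhere in that row among the used columns, apart from columns $p+2$ onward which are killed by the row expansion. Expanding along it leaves rows and columns $p+2,\ldots,l$ with entries $e_{\rho_{a'}-a'+b'}$, which is $s_{\rho^\trans}(X_d)$. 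The Laplace sign is $(-1)$, since one transposition of columns $p$ and $p+1$ separates the two subsets.

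Adding the two terms gives \eqref{eq:s_gamma/delta}. Here the factor $s_{\rho^\trans}$ is read in $X_d$, as in the statement.

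The main obstacle is bookkeeping rather than ideas. I need to verify the index shifts so that the minors are literally the Jacobi--Trudi matrices of $T^\trans$ and $L^\trans$; to do so I pad $\delta$ with zeros and use the convention that the $\mu_b$ are zero beyond the length. I also need to track the Laplace signs. The degenerate cases $p=0$ and $r=0$ are handled by the same computation with empty blocks: when $p=0$, $\tau$ and $T$ are empty and the $c=p$ term is absent. These cases should be noted separately.
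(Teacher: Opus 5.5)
Your proof is correct and is essentially the paper's argument: the same block structure of the dual Jacobi--Trudi matrix for $(\gamma/\delta)^\trans$, with your Laplace expansion along the first $p$ rows spelling out the two surviving terms and their signs, which the paper compresses into ``expanding the determinant.'' Two small clean-ups. In the $c=p$ term, expand along column $p$, whose restriction to rows $p+1,\ldots,l$ is $(1,0,\ldots,0)^\trans$, rather than along row $p+1$, which still contains $e_d(X_d)$ in column $p+2$. For $p=0$, the $c=p$ term vanishes because $s_T=0$ by the paper's convention for $P_{-1}^{(j)}$, not because $T$ is an empty diagram.
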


\begin{proof}
	We apply \eqref{dq:dualJT} to $(\gamma/\delta)^\trans$ with $l=p+r+1$. It is easy to verify that the associated dual Jacobi--Trudi matrix has the following block decomposition:
	\begin{align*}
		\begin{pmatrix}
			\mathbf{M} & \mathbf{k} & \mathbf{0}\\
			(0,\ldots,0,1) & e_{d'}(X_d) & (e_d(X_d),0,\ldots,0)\\
			\mathbf{0} & \mathbf{l} & \mathbf{N}
		\end{pmatrix},
	\end{align*}
	where
	\begin{align*}
		\mathbf{M} := \big(e_{\tau_a-a+b}(X_d)\big)_{1\le a,b\le p},\qquad\qquad \mathbf{k} := \big(e_{\tau_a-a+p+d'}(X_d)\big)_{1\le a\le p}^\trans,
	\end{align*}
	and
	\begin{align*}
		\mathbf{N} := \big(e_{\rho_{a'}-a'+b'}(X_d)\big)_{1\le a',b'\le r},\qquad\qquad \mathbf{l} := \big(e_{\rho_{a'}-a'-j+1}(X_d)\big)_{1\le a'\le p}^\trans.
	\end{align*}
	Expanding the determinant gives
	\begin{align*}
		&s_{(\gamma/\delta)^\trans}(X_d)\\
		&\qquad= \det \mathbf{M} \det \begin{pmatrix}
			e_{d'}(X_d) & (e_d(X_d),0,\ldots,0)\\
			\mathbf{l} & \mathbf{N}
		\end{pmatrix} - \det \mathbf{N} \det(\mathbf{M}_1,\ldots,\mathbf{M}_{p-1},\mathbf{k}).
	\end{align*}
	Using \eqref{dq:dualJT} again, we find that
	\begin{align*}
		s_{\tau^\trans}(X_d) = \det \mathbf{M},\qquad\qquad s_{\rho^\trans} = \det \mathbf{N}.
	\end{align*}
	Also,
	\begin{align*}
		s_{L^\trans}(X_d) = s_{((d-1)\cup\rho)/(j-1)^\trans}(X_d) = \det \begin{pmatrix}
			e_{d'}(X_d) & (e_d(X_d),0,\ldots,0)\\
			\mathbf{l} & \mathbf{N}
		\end{pmatrix},
	\end{align*}
	and
	\begin{align*}
		s_{T^\trans}(X_d) = s_{(\tau^{(d)}/P_{p-1}^{(j)})^\trans}(X_d) = \det(\mathbf{M}_1,\ldots,\mathbf{M}_{p-1},\mathbf{k}).
	\end{align*}
	The desired identity \eqref{eq:s_gamma/delta} then follows.
\end{proof}

\begin{corollary}
	We have
	\begin{align}\label{eq:tau*L-rho*T}
		\pi_d(s_\tau s_L - s_\rho s_T) \ge_s 0.
	\end{align}
\end{corollary}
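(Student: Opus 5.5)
The plan is to transport the identity \eqref{eq:s_gamma/delta} back from polynomials in $d$ variables to $\Lambda$, using the operator $\Spomega_d$ and its injectivity from Lemma~\ref{le:spt-proj}. The left-hand side of \eqref{eq:s_gamma/delta} is, by \eqref{eq:omega-skew} and \eqref{eq:omega-product},
\begin{align*}
	\Spomega_d(s_\tau)\Spomega_d(s_L) - \Spomega_d(s_\rho)\Spomega_d(s_T) = \Spomega_d(s_\tau s_L - s_\rho s_T).
\end{align*}
The right-hand side is $\Spomega_d(s_{\gamma/\delta})$, again by \eqref{eq:omega-skew}. The only point that needs a moment's care is the term written as $s_{\rho^\trans}$ without an alphabet in \eqref{eq:s_gamma/delta}. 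In the proof it arises as $\det\mathbf{N}$ with entries $e_\bullet(X_d)$, so it should be read as $s_{\rho^\trans}(X_d)=\Spomega_d(s_\rho)$.

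This gives
\begin{align*}
	\Spomega_d(s_\tau s_L - s_\rho s_T) = \Spomega_d(s_{\gamma/\delta}).
\end{align*}
By \eqref{eq:spt-proj}, both sides may be replaced by their images under $\pi_d$:
\begin{align*}
	\Spomega_d\big(\pi_d(s_\tau s_L - s_\rho s_T)\big) = \Spomega_d\big(\pi_d(s_{\gamma/\delta})\big).
\end{align*}
Both arguments lie in the span of $s_\lambda$ with $\lambda_1\le d$, and $\Spomega_d$ is injective there by Lemma~\ref{le:spt-proj}. Hence
\begin{align*}
	\pi_d(s_\tau s_L - s_\rho s_T) = \pi_d(s_{\gamma/\delta}).
\end{align*}

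It remains to check two things. First, $\delta\subseteq\gamma$, so that $\gamma/\delta$ is a genuine skew diagram. This holds because $\gamma_i = d-2+\tau_i\ge d-2$ for $i\le p$, and $\gamma_{p+1}=d-1\ge j-1$ since $j\le d-1$. Second, a skew Schur function is Schur positive by the Littlewood--Richardson rule, and $\pi_d$ merely deletes some Schur terms, so it preserves Schur positivity. Thus $\pi_d(s_{\gamma/\delta})\ge_s 0$, which is \eqref{eq:tau*L-rho*T}.

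The main obstacle is essentially bookkeeping: confirming that the Jacobi--Trudi identity \eqref{eq:s_gamma/delta} is a genuine equality of $\Spomega_d$-images of the stated functions (including the degenerate cases $p=0$ or $r=0$), so that the injectivity argument applies verbatim.
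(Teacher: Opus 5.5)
Your proposal is correct and is essentially the paper's argument. Both push the identity \eqref{eq:s_gamma/delta} through $\Spomega_d$, drop the $I_d$-part via \eqref{eq:spt-proj}, and use injectivity of $\Spomega_d$ on the span of $s_\lambda$ with $\lambda_1\le d$. The only difference is cosmetic: you lift the right-hand side to $\Spomega_d(s_{\gamma/\delta})$ and obtain $\pi_d(s_\tau s_L - s_\rho s_T)=\pi_d(s_{\gamma/\delta})$ in $\Lambda$, whereas the paper reads off positivity directly from $s_{(\gamma/\delta)^\trans}(X_d)$ via the bijection $s_\lambda\mapsto s_{\lambda^\trans}(X_d)$.
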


\begin{proof}
	It follows from \eqref{eq:s_gamma/delta} that
	\begin{align*}
		\Spomega_d(s_\tau s_L - s_\rho s_T) = s_{(\gamma/\delta)^\trans}(X_d) \ge_s 0.
	\end{align*}
	Note that $(s_\tau s_L - s_\rho s_T) - \pi_d(s_\tau s_L - s_\rho s_T)$ is in $I_d$ so that it is annihilated by $\Spomega_d$ according to Lemma~\ref{le:anni}. Thus,
	\begin{align*}
		\Spomega_d\big(\pi_d(s_\tau s_L - s_\rho s_T)\big) = \Spomega_d(s_\tau s_L - s_\rho s_T) \ge_s 0.
	\end{align*}
	Since on the span of $s_\lambda$ with $\lambda_1\le N$, the injective map $\Spomega_d$ sends $s_\lambda$ bijectively to $s_{\lambda^\trans}(X_d)$, the Schur positivity of $\pi_d(s_\tau s_L - s_\rho s_T)$ is then immediate.
\end{proof}

The final piece of the ingredients we need is a simplification of $[s_\alpha] \Phi_{t,j}^\beta$ for our choice of $\alpha$.

\begin{lemma}
	We have
	\begin{align}\label{eq:alpha-no-d-Phi-simple}
		[s_\alpha] \Phi_{t,j}^\beta = \begin{cases}
			A_{t-p,p}(\alpha) - B_{t-p,p}(\alpha) - B_{t-p+1,p-1}(\alpha), & \text{if $t\ge p$},\\
			0, & \text{if $t<p$}.
		\end{cases}
	\end{align}
\end{lemma}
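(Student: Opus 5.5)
We start from the expansion already used in the proof of Theorem~\ref{th:alpha-contain-d}, namely
\begin{align*}
	[s_\alpha] \Phi_{t,j}^\beta = \sum_{v\ge 0} \big(A_{t-v,v}(\alpha) - B_{t-v,v}(\alpha)\big),
\end{align*}
with the convention that terms with $t-v<0$ vanish. The key input is Lemma~\ref{le:AB=0}. For our $\alpha$ we have $p_\alpha=p$, since exactly the first $p$ parts exceed $d$ and the parts $\rho_i$ are $<d$. So $A_{t-v,v}(\alpha)=0$ for $v<p$, and $B_{t-v,v}(\alpha)=0$ for $v<p-1$. If $t<p$, then every $v\le t$ satisfies $v<p$, so all $A$-terms vanish. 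The only possibly surviving $B$-term is the one with $v=p-1\le t$, which forces $t=p-1$ and gives $B_{0,p-1}(\alpha)=\bigangle{s_\alpha,s_{\varnothing/\beta}s_{P_{p-1}^{(j)}}}$. This is zero because $\alpha\ne P_{p-1}^{(j)}$: $\alpha$ has $p$ parts larger than $d$ when $p\ge1$. (Alternatively, $B_{0,p-1}(\alpha)$ vanishes unless $\beta=\varnothing$.) This settles the case $t<p$.

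For $t\ge p$, the sum reduces to $v\ge p-1$. The term $v=p-1$ contributes $-B_{t-p+1,p-1}(\alpha)$ and the term $v=p$ contributes $A_{t-p,p}(\alpha)-B_{t-p,p}(\alpha)$. It then remains to show that $A_{u,v}(\alpha)=B_{u,v}(\alpha)=0$ for all $v\ge p+1$. This is the main step.

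The plan is to mimic the proof of Lemma~\ref{le:AB=0}, with $\lambda_{v+1}$ replaced by the fact that $\alpha$ has no part equal to $d$. Consider first
\begin{align*}
	A_{u,v}(\alpha)=\sum_\mu c_{\mu,\beta}^{P_u^{(j)}}c_{\mu,R_v}^{\alpha}.
\end{align*}
A nonzero term needs $R_v\subseteq\alpha$, i.e.\ $\alpha_v\ge d$. Since $\alpha$ has no part equal to $d$, this means $\alpha_v>d$, so $v\le p$; this contradicts $v\ge p+1$. For $B_{u,v}(\alpha)=\bigangle{s_\alpha,s_{R_u/\beta}s_{P_v^{(j)}}}$, positivity needs $P_v^{(j)}\subseteq\alpha$ by Lemma~\ref{le:LR-facts}, hence again $\alpha_v\ge d$, so $v\le p$. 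Thus both vanish for $v\ge p+1$, and collecting the terms $v=p-1$ and $v=p$ gives \eqref{eq:alpha-no-d-Phi-simple}.

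The obstacle is mild. One has to check the containment argument carefully when $v=0$, where $R_0=\varnothing$ and nothing is imposed; this only occurs when $p=0$, and then $v\ge p+1$ excludes it. One also has to confirm that the boundary term with $p=0$, namely $B_{t+1,-1}$, is zero by convention, which it is.
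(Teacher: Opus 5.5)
Your proof is correct and follows the paper's argument. The paper likewise combines the lower bounds from Lemma~\ref{le:AB=0} ($v\ge p$ for $A$, $v\ge p-1$ for $B$) with the upper bound $v\le p$, obtained from $R_v\subseteq\alpha$ or $P_v^{(j)}\subseteq\alpha$ and the absence of parts equal to $d$, and then separately checks that $B_{0,p-1}(\alpha)=0$ when $t=p-1$. Your justification of $\alpha\ne P_{p-1}^{(j)}$ via $\alpha_1>d\ge (P_{p-1}^{(j)})_1$ is slightly cleaner than the paper's, since it does not need the inductive assumption $t\ge 1$.
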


\begin{proof}
	Recalling \eqref{eq:AB-def},
	\begin{align*}
		[s_\alpha] \Phi_{t,j}^\beta = \sum_{v\ge 0} \big(A_{t-v,v}(\alpha) - B_{t-v,v}(\alpha)\big).
	\end{align*}
	According to \eqref{eq:A-expression}, if $A_{t-v,v}(\alpha)>0$, then we must have $R_v\subseteq \alpha$, so that $\alpha_v\ge d$. Since $d$ does not appear in $\alpha$, we indeed have $\alpha_v>d$, which implies that $v\le p$. By the relation for $A$ in \eqref{eq:AB=0},
	\begin{align*}
		\sum_{v\ge 0} A_{t-v,v}(\alpha) = A_{t-p,p}(\alpha).
	\end{align*}
	Similarly, if $B_{t-v,v}(\alpha)>0$, we need $P_v^{(j)}\subseteq \alpha$, which in the same way, gives $v\le p$. By the relation for $B$ in \eqref{eq:AB=0},
	\begin{align*}
		\sum_{v\ge 0} B_{t-v,v}(\alpha) = B_{t-p,p}(\alpha) + B_{t-p+1,p-1}(\alpha).
	\end{align*}
	Hence,
	\begin{align*}
		[s_\alpha] \Phi_{t,j}^\beta = A_{t-p,p}(\alpha) - B_{t-p,p}(\alpha) - B_{t-p+1,p-1}(\alpha).
	\end{align*}
	Now \eqref{eq:alpha-no-d-Phi-simple} is plain when $t\ge p$ or $t\le p-2$. For the last case $t=p-1$, we have
	\begin{align*}
		[s_\alpha] \Phi_{p-1,j}^\beta = - B_{0,p-1}(\alpha) = -\bigangle{s_\alpha,s_{\varnothing/\beta}s_{P_{p-1}^{(j)}}}.
	\end{align*}
	If $\beta\ne \varnothing$, then the skew diagram $\varnothing/\beta$ is invalid so that $s_{\varnothing/\beta}=0$ and hence that $[s_\alpha] \Phi_{p-1,j}^\beta = 0$. If $\beta=\varnothing$, then $[s_\alpha] \Phi_{p-1,j}^\varnothing = -\bigangle{s_\alpha,s_{P_{p-1}^{(j)}}}$. Note that $\alpha\ne P_{p-1}^{(j)}$; otherwise, $\alpha$ has a part of size $d$ because $p-1=t\ge 1$ by the inductive assumption, but this violates our requirement for $\alpha$. Hence, $[s_\alpha] \Phi_{p-1,j}^\varnothing$ still vanishes.
\end{proof}

Unlike Theorem~\ref{th:alpha-contain-d} for the first case of $\alpha$ in the preceding subsection, this time we confirm \eqref{eq:s-alpha-nonnegativity} directly.

\begin{theorem}\label{th:alpha-contain-no-d}
	Let $\alpha$ be a partition containing no part of size $d$. Then
	\begin{align}\label{eq:Phi-ineq-alpha-no-d}
		[s_{\alpha}]\Phi_{t,j}^\beta\ge 0.
	\end{align}
\end{theorem}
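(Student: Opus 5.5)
We start from \eqref{eq:alpha-no-d-Phi-simple}. The case $t<p$ is trivial, so put $u:=t-p\ge 0$. It remains to show
\begin{align*}
	A_{u,p}(\alpha)\ge B_{u,p}(\alpha)+B_{u+1,p-1}(\alpha).
\end{align*}
Since $\alpha$ has no part equal to $d$, both skew shapes $\alpha/R_p$ and $\alpha/P_p^{(j)}$ split into a piece of shape $\tau$ to the right of $R_p$ and a disconnected piece below it. This gives $s_{\alpha/R_p}=s_\tau s_\rho$ and $s_{\alpha/P_p^{(j)}}=s_\tau s_{\rho/(j)}$. By \eqref{eq:Hall-F} we then get
\begin{align*}
	A_{u,p}(\alpha)&=\bigangle{s_\tau s_\rho,s_{P_u^{(j)}/\beta}},\\
	B_{u,p}(\alpha)&=\bigangle{s_\tau s_{\rho/(j)},s_{R_u/\beta}},\\
	B_{u+1,p-1}(\alpha)&=\bigangle{s_D,s_{R_{u+1}/\beta}}.
\end{align*}
If $p=0$, the last term is $0$ by convention. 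I will split into cases according to how $\beta$ sits inside $R_{u+1}$.

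\emph{Main case: $\beta\subseteq R_u$.} Let $\kappa$ be as in Lemma~\ref{le:rotation}, so \eqref{eq:rotation} turns the three test functions into $s_{(d\cup\kappa)/(d')}$, $s_\kappa$ and $s_{d\cup\kappa}$, all of width at most $d$. Hence $\pi_d$ can be inserted freely on the other side of each inner product. Using \eqref{eq:Hall-F}, $A_{u,p}(\alpha)=\bigangle{h_{d'}s_\tau s_\rho,s_{d\cup\kappa}}$. By \eqref{eq:s_D} with $N=d$, $B_{u+1,p-1}(\alpha)=\bigangle{\pi_d(s_\rho s_T),s_{d\cup\kappa}}$. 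Now write $h_{d'}s_\tau s_\rho=s_\tau s_L+s_\tau(h_{d'}s_\rho-s_L)$. By \eqref{eq:FG-proj}, \eqref{eq:s_L} and \eqref{eq:proj-behavior},
\begin{align*}
	\pi_d\big(s_\tau(h_{d'}s_\rho-s_L)\big)=\pi_d\big(s_\tau\Delta_d(s_{\rho/(j)})\big)=\Delta_d\big(\pi_d(s_\tau s_{\rho/(j)})\big).
\end{align*}
Pairing this with $s_{d\cup\kappa}=\Delta_d(s_\kappa)$, and using that $\Delta_d$ preserves the Hall inner product, gives exactly $B_{u,p}(\alpha)$. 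Therefore
\begin{align*}
	A_{u,p}(\alpha)-B_{u,p}(\alpha)-B_{u+1,p-1}(\alpha)=\bigangle{\pi_d(s_\tau s_L-s_\rho s_T),s_{d\cup\kappa}},
\end{align*}
which is nonnegative by \eqref{eq:tau*L-rho*T}.

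\emph{Second case: $\beta\not\subseteq R_u$ but $\beta\subseteq P_u^{(j)}$.} Here $B_{u,p}(\alpha)=0$. Let $\kappa'$ be the rotated complement of $\beta$ in $R_{u+1}$, so $\kappa'_1<d$, $s_{P_u^{(j)}/\beta}=s_{\kappa'/(d')}$ and $s_{R_{u+1}/\beta}=s_{\kappa'}$. The same manipulations as above give
\begin{align*}
	A_{u,p}(\alpha)-B_{u+1,p-1}(\alpha)=\bigangle{s_\rho(h_{d'}s_\tau-s_T),s_{\kappa'}},
\end{align*}
which is nonnegative by \eqref{eq:s_T}.

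\emph{Third case: $\beta\not\subseteq P_u^{(j)}$.} Then $A_{u,p}(\alpha)=B_{u,p}(\alpha)=0$, and we must show $B_{u+1,p-1}(\alpha)=0$. This is where I expect the main care to be needed. If $\beta\not\subseteq R_{u+1}$ it is immediate. Otherwise $\beta_{u+1}>j$, so the complement $\kappa'$ has $\kappa'_1<d'$, and $B_{u+1,p-1}(\alpha)=c^{\alpha}_{\kappa',P_{p-1}^{(j)}}$. If this were positive, \eqref{eq:LR-facts-ineq} applied at row $p$ would give
\begin{align*}
	\alpha_p\le \kappa'_1+j<d,
\end{align*}
contradicting $\alpha_p=d+\tau_p>d$.

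Combining the three cases yields \eqref{eq:Phi-ineq-alpha-no-d}. The delicate points are the bookkeeping of the projections $\pi_d$ and correctly identifying the $\Delta_d$-term with $B_{u,p}(\alpha)$.
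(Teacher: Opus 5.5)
Your proposal is correct and follows the paper's proof essentially step for step: the same reduction via \eqref{eq:alpha-no-d-Phi-simple}, the same case split on how $\beta$ sits inside $R_{u+1}$ (you merge the paper's Cases~3 and~4), and the same identities \eqref{eq:s_L}, \eqref{eq:proj-behavior}, \eqref{eq:s_D}, \eqref{eq:tau*L-rho*T} and \eqref{eq:s_T}. The only cosmetic difference is that in your second case your ``same manipulations'' use \eqref{eq:s_D} with $N=d$ where the paper uses $N=d-1$; this is equally valid, since $\kappa'_1\le d-1\le d$ lets $\pi_d$ be inserted against $s_{\kappa'}$.
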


\begin{proof}
	It suffices to assume $t\ge p$ in light of \eqref{eq:alpha-no-d-Phi-simple}. By the same arguments as we show \eqref{eq:alpha&P} and \eqref{eq:alpha-&R}, it is true that
	\begin{align*}
		s_{\alpha/R_p} = s_\tau s_\rho,
	\end{align*}
	and
	\begin{align*}
		s_{\alpha/P_p^{(j)}} = s_\tau s_{\rho/(j)}.
	\end{align*}
	We also recall from the definition of the skew diagram $D$ that
	\begin{align*}
		s_{\alpha/P_{p-1}^{(j)}} = s_D.
	\end{align*}
	For convenience, write
	\begin{align*}
		t' := t-p.
	\end{align*}
	Note that
	\begin{align*}
		R_{t'} \subseteq P_{t'}^{(j)} \subseteq R_{t'+1}.
	\end{align*}
	We then have four cases of $\beta$.
	
	\textit{Case 1: $\beta\subseteq R_{t'}$.} Let $\kappa:=(d-\beta_{t'},\ldots,d-\beta_1)$ be the rotated completement of $\beta$ in $R_{t'}$ and put $\kappa^+:=d \cup \kappa$. By \eqref{eq:rotation},
	\begin{align*}
		s_{R_{t'}/\beta} = s_\kappa,\qquad s_{R_{t'+1}/\beta} = s_{\kappa^+},\qquad s_{P_{t'}^{(j)}/\beta} = s_{\kappa^+/(d')}.
	\end{align*}
	Now,
	\begin{align*}
		A_{t',p}(\alpha) = \bigangle{s_\alpha,s_{P_{t'}^{(j)}/\beta}s_{R_p}}= \bigangle{s_{\alpha/R_p},s_{P_{t'}^{(j)}/\beta}}= \bigangle{s_\tau s_\rho,s_{\kappa^+/(d')}}= \bigangle{s_{\kappa^+},h_{d'}s_\tau s_\rho},
	\end{align*}
	where we have utilized \eqref{eq:Hall-F} for the second and fourth equalities. Since $\kappa_1^+\le d$, we obtain
	\begin{align*}
		A_{t',p}(\alpha) = \bigangle{s_{\kappa^+},\pi_d(h_{d'}s_\tau s_\rho)}.
	\end{align*}
	Next,
	\begin{align*}
		B_{t',p}(\alpha) = \bigangle{s_\alpha,s_{R_{t'}/\beta}s_{P_{p}^{(j)}}}= \bigangle{s_{\alpha/P_{p}^{(j)}},s_{R_{t'}/\beta}}= \bigangle{s_\tau s_{\rho/(j)},s_{\kappa}}.
	\end{align*}
	Since $\kappa_1\le d$, we further have
	\begin{align*}
		B_{t',p}(\alpha) = \bigangle{s_{\kappa},\pi_d(s_\tau s_{\rho/(j)})} = \bigangle{s_{\kappa^+},\Delta_d\big(\pi_d(s_\tau s_{\rho/(j)})\big)}.
	\end{align*}
	We see that $\Delta_d\big(\pi_d(s_\tau s_{\rho/(j)})\big)$ is equal to $\pi_d\big(s_\tau\Delta_d(s_{\rho/(j)})\big)$ by \eqref{eq:proj-behavior}, and further to $\pi_d\big(s_\tau\pi_d(h_{d'}s_\rho-s_L)\big)$ by \eqref{eq:s_L}, and lastly to $\pi_d\big(s_\tau(h_{d'}s_\rho-s_L)\big)$ by \eqref{eq:FG-proj}. Therefore,
	\begin{align*}
		B_{t',p}(\alpha) = \bigangle{s_{\kappa^+},\pi_d\big(s_\tau(h_{d'}s_\rho-s_L)\big)}.
	\end{align*}
	Finally,
	\begin{align*}
		B_{t'+1,p-1}(\alpha) = \bigangle{s_\alpha,s_{R_{t'+1}/\beta}s_{P_{p-1}^{(j)}}} = \bigangle{s_{\alpha/P_{p-1}^{(j)}},s_{R_{t'+1}/\beta}} = \bigangle{s_{\kappa^+},s_D}.
	\end{align*}
	Once again, we can restrict $s_D$ to $\pi_d(s_D)$ because $\kappa_1^+\le d$. Hence,
	\begin{align*}
		B_{t'+1,p-1}(\alpha) = \bigangle{s_{\kappa^+},\pi_d(s_D)}.
	\end{align*}
	It follows from \eqref{eq:alpha-no-d-Phi-simple} that
	\begin{align*}
		[s_\alpha] \Phi_{t,j}^\beta &= \bigangle{s_{\kappa^+},\pi_d(h_{d'}s_\tau s_\rho)-\pi_d\big(s_\tau(h_{d'}s_\rho-s_L)\big)-\pi_d(s_D)}\\
		&= \bigangle{s_{\kappa^+},\pi_d(s_\tau s_L)-\pi_d(s_D)}.
	\end{align*}
	Using \eqref{eq:s_D}, we conclude that
	\begin{align*}
		[s_\alpha] \Phi_{t,j}^\beta = \bigangle{s_{\kappa^+},\pi_d(s_\tau s_L)-\pi_d(s_\rho s_T)},
	\end{align*}
	so that
	\begin{align*}
		[s_\alpha] \Phi_{t,j}^\beta \ge 0,
	\end{align*}
	which comes from the Schur positivity of $\pi_d(s_\tau s_L)-\pi_d(s_\rho s_T)$ derived in \eqref{eq:tau*L-rho*T}.
	
	\textit{Case 2: $\beta\subseteq P_{t'}^{(j)}$ but $\beta\not\subseteq R_{t'}$.} Here $\len(\beta)=t'+1$ and $1\le \beta_{t'+1}\le j$. Let $\kappa':=(d-\beta_{t'+1},\ldots,d-\beta_1)$ be the rotated completement of $\beta$ in $R_{t'+1}$, so that $\kappa'_1\le d-1$. In a similar way as we get \eqref{eq:rotation}, rotation gives
	\begin{align*}
		s_{R_{t'+1}/\beta} = s_{\kappa'},\qquad\qquad s_{P_{t'}^{(j)}/\beta} = s_{\kappa'/(d')}.
	\end{align*}
	Now,
	\begin{align*}
		A_{t',p}(\alpha) = \bigangle{s_\alpha,s_{P_{t'}^{(j)}/\beta}s_{R_p}}= \bigangle{s_{\alpha/R_p},s_{P_{t'}^{(j)}/\beta}}= \bigangle{s_\tau s_\rho,s_{\kappa'/(d')}}= \bigangle{s_{\kappa'},h_{d'}s_\tau s_\rho}.
	\end{align*}
	Next, since the skew diagram $R_{t'}/\beta$ is invalid,
	\begin{align*}
		B_{t',p}(\alpha) = \bigangle{s_\alpha,s_{R_{t'}/\beta}s_{P_{p}^{(j)}}}=0.
	\end{align*}
	Finally,
	\begin{align*}
		B_{t'+1,p-1}(\alpha) = \bigangle{s_\alpha,s_{R_{t'+1}/\beta}s_{P_{p-1}^{(j)}}} = \bigangle{s_{\alpha/P_{p-1}^{(j)}},s_{R_{t'+1}/\beta}} = \bigangle{s_{\kappa'},s_D}.
	\end{align*}
	By \eqref{eq:alpha-no-d-Phi-simple},
	\begin{align*}
		[s_\alpha] \Phi_{t,j}^\beta = \bigangle{s_{\kappa'},h_{d'}s_\tau s_\rho-s_D} = \bigangle{s_{\kappa'},\pi_{d-1}(h_{d'}s_\tau s_\rho)-\pi_{d-1}(s_D)},
	\end{align*}
	with the second equality coming from the fact that $\kappa'_1\le d-1$. Using \eqref{eq:s_D}, we then get
	\begin{align*}
		[s_\alpha] \Phi_{t,j}^\beta = \bigangle{s_{\kappa'},\pi_{d-1}(h_{d'}s_\tau s_\rho)-\pi_{d-1}(s_\rho s_T)} = \bigangle{s_{\kappa'},s_\rho(h_{d'}s_\tau - s_T)}.
	\end{align*}
	It follows that
	\begin{align*}
		[s_\alpha] \Phi_{t,j}^\beta \ge 0,
	\end{align*}
	because $h_{d'}s_\tau - s_T$ is Schur positive according to \eqref{eq:s_T}, and therefore so is $s_\rho(h_{d'}s_\tau - s_T)$.
	
	\textit{Case 3: $\beta\subseteq R_{t'+1}$ but $\beta\not\subseteq P_{t'}^{(j)}$.} Here $\len(\beta)=t'+1$ and $j+1\le \beta_{t'+1}\le d$. Since the skew diagrams $P_{t'}^{(j)}/\beta$ and $R_{t'}/\beta$ are invalid,
	\begin{align*}
		A_{t',p}(\alpha) = B_{t',p}(\alpha) = 0.
	\end{align*}
	Let $\kappa'$ be as in the second case so that $\kappa'_1=d-\beta_{t'+1}$. We then use \eqref{eq:alpha-no-d-Phi-simple} to get
	\begin{align*}
		[s_\alpha] \Phi_{t,j}^\beta = -B_{t'+1,p-1}(\alpha) = -\bigangle{s_{\kappa'},s_{\alpha/P_{p-1}^{(j)}}} = -c_{\kappa',P_{p-1}^{(j)}}^{\alpha},
	\end{align*}
	where we have recalled \eqref{eq:Schur-c-2} for the last equality. If $p=0$, then the above is $0$ by default. Let $p\ge 1$ and assume that the above does not vanish. Then according to \eqref{eq:LR-facts-ineq},
	\begin{align*}
		\alpha_p \le \kappa'_1+(P_{p-1}^{(j)})_p = \kappa'_1 + j = d-\beta_{t'+1}+j < d.
	\end{align*}
	However, by definition we have $\alpha_p = d+\tau_p > d$, resulting in a contradiction. Thus, $c_{\kappa',P_{p-1}^{(j)}}^{\alpha}=0$, so that
	\begin{align*}
		[s_\alpha] \Phi_{t,j}^\beta = 0.
	\end{align*}
	
	\textit{Case 4: $\beta\not\subseteq R_{t'+1}$.} It is clear that
	\begin{align*}
		A_{t',p}(\alpha) = B_{t',p}(\alpha) = B_{t'+1,p-1}(\alpha) = 0,
	\end{align*}
	because none of the skew diagrams $P_{t'}^{(j)}/\beta$, $R_{t'}/\beta$, $R_{t'+1}/\beta$ is valid. We conclude from \eqref{eq:alpha-no-d-Phi-simple} that
	\begin{align*}
		[s_\alpha] \Phi_{t,j}^\beta = 0
	\end{align*}
	in this final case.
\end{proof}

\section{Strong $q$-log-convexity of $d$-Hoggatt polynomials}\label{sec:d-Hoggatt}

For any alphabet $X$, define a formal power series in $\Lambda[[q]]$:
\begin{align}
	\dcH_X(q) = \dcH_X^{(d)}(q) := \sum_{k\ge 0} s_{R_k}(X)q^k.
\end{align}

\begin{lemma}\label{le:dcH-1}
	We have
	\begin{align}
		\dcH_{1_N}^{(d)}(q) = H_N^{(d)}(q).
	\end{align}
\end{lemma}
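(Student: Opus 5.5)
This is a direct specialization. We compare coefficients of $q^k$ on both sides. By definition,
\begin{align*}
	\dcH_{1_N}(q) = \sum_{k\ge 0} s_{R_k}(1_N)\,q^k = \sum_{k\ge 0} s_{(d^k)}(1_N)\,q^k,
\end{align*}
since $R_k=(d^k)$ by the definition at the start of Section~\ref{sec:Schur-positivity}. We then invoke \eqref{eq:Hoggatt-Schur-1}, which gives $s_{(d^k)}(1_N)=\qangle{N}{k}^{(d)}$ for every $k\ge 0$. Hence the $q^k$-coefficient of $\dcH_{1_N}(q)$ equals that of $H_N(q)$, at least for $0\le k\le N$.

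The one point that needs care is the range of summation. In $\dcH_{1_N}(q)$ the index runs over all $k\ge 0$, whereas $H_N(q)$ stops at $k=N$, so the terms with $k>N$ must vanish. We can see this in two equivalent ways.

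\emph{Via the product formula.} In \eqref{eq:d-H-def}, the factor with $j=0$ is $\binom{N}{k}$, which is $0$ when $k>N$. So $\qangle{N}{k}^{(d)}=0$ for $k>N$.

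\emph{Via semistandard tableaux.} When $k>N$, the first column of $(d^k)$ has $k>N$ cells. Column strictness then requires more than $N$ distinct entries from $\{1,\ldots,N\}$, which is impossible. Hence no semistandard tableau of shape $(d^k)$ uses only the variables $x_1,\ldots,x_N$, and $s_{(d^k)}(X_N)=0$; the specialization to $1_N$ is therefore $0$ too.

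With the tail removed, the infinite series truncates to $\sum_{k=0}^N \qangle{N}{k}^{(d)}q^k = H_N(q)$, which proves the lemma. No step is a real obstacle; the only thing to guard against is silently extending the sum in $H_N(q)$ past $k=N$.
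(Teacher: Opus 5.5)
Your proof is correct and follows the paper's own argument. The paper also removes the terms with $k>N$ by noting that $R_k$ then has more than $N$ rows, so $s_{R_k}(1_N)=0$ (your column-strictness argument), and identifies the remaining coefficients through \eqref{eq:Hoggatt-Schur-1}; your alternative route to the vanishing via the factor $\binom{N}{k}$ in \eqref{eq:d-H-def} is also fine, reading that product as running over $0\le j\le d-1$ (the printed upper limit $n-1$ is a typo).
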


\begin{proof}
	Note that $s_{R_k}(1_N) = 0$ for all $k>N$ because in these cases $R_k$ has more than $N$ rows and hence requires more than $N$ variables to ensure nonvanishing. Therefore,
	\begin{align*}
		\dcH_{1_N}(q) = \sum_{k= 0}^N s_{R_k}(1_N)q^k = \sum_{k= 0}^N \qangle{N}{k}q^k = H_N(q),
	\end{align*}
	where we have used \eqref{eq:Hoggatt-Schur-1}. 
\end{proof}

Let $X_N=(x_1,\ldots,x_N)$ and $Y_M=(y_1,\ldots,y_M)$ be two alphabets. Denote
\begin{align*}
	X_N\oplus Y_M := (x_1,\ldots,x_N,y_1,\ldots,y_M).
\end{align*}
By separating, in a semistandard tableau of shape $\lambda$, the cells whose entries refer to variables in $X_N$ and from those whose entries refer to variables in $Y_M$, we have
\begin{align}\label{eq:alphabet-splitting}
	s_{\lambda}(X_N\oplus Y_M) = \sum_{\mu \subseteq \lambda} s_{\mu}(X_N) s_{\lambda/\mu}(Y_M).
\end{align}

In the sequel, we treat a single variable $z$ as an alphabet $(z)$.

\begin{lemma}\label{le:R-alphabet-splitting}
	For $k\ge 0$,
	\begin{align}
		s_{R_k}(X_N \oplus z) = \sum_{j=0}^d s_{P_{k-1}^{(j)}}(X_N) z^{d-j}.
	\end{align}
\end{lemma}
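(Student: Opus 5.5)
Apply the alphabet splitting formula \eqref{eq:alphabet-splitting} with $\lambda=R_k$ and $Y_M=(z)$. This gives
\begin{align*}
	s_{R_k}(X_N\oplus z)=\sum_{\mu\subseteq R_k} s_\mu(X_N)\, s_{R_k/\mu}(z).
\end{align*}
We then determine which $\mu$ give a nonzero contribution and evaluate $s_{R_k/\mu}(z)$ for those.

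First, a semistandard filling of $R_k/\mu$ using only the single letter $z$ exists exactly when no two cells of $R_k/\mu$ lie in the same column. That is, $R_k/\mu$ must be a horizontal strip, and in that case $s_{R_k/\mu}(z)=z^{|R_k|-|\mu|}$. By \eqref{eq:HS-condition} with $\lambda=R_k=(d^k)$, this condition reads
\begin{align*}
	d\ge\mu_1\ge d\ge\mu_2\ge\cdots\ge d\ge\mu_k\ge 0.
\end{align*}
It forces $\mu_1=\cdots=\mu_{k-1}=d$ and leaves $\mu_k=j$ free with $0\le j\le d$. So $\mu=(d^{k-1},j)$, which is exactly $P_{k-1}^{(j)}$ (with $P_{k-1}^{(0)}=R_{k-1}$ and $P_{k-1}^{(d)}=R_k$). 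The remaining strip consists of the last $d-j$ cells of row $k$, so it contributes $z^{d-j}$. Summing over $j$ gives the claimed formula.

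The only step needing care is the boundary case $k=0$. There $R_0=\varnothing$, the left side is $1$, and the right side should be read with the paper's convention that every $P_{-1}^{(j)}$ vanishes except the term standing for $R_0$. Concretely, $P_{-1}^{(d)}=R_0$ contributes $s_\varnothing z^0=1$. With this reading the identity also holds for $k=0$, so I would state explicitly that for $k=0$ the sum is interpreted as $1$. No real obstacle is expected.
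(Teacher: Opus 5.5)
Your proof is correct and follows the paper's argument: alphabet splitting with $Y_M=(z)$, the evaluation of $s_{R_k/\mu}(z)$ as $z^{|R_k|-|\mu|}$ or $0$ according to whether $R_k/\mu$ is a horizontal strip, and the identification of those $\mu$ as $P_{k-1}^{(j)}$ for $0\le j\le d$. Your care at $k=0$ is warranted, although the paper's stated convention actually sets every $P_{-1}^{(j)}$-term to $0$, which clashes with $P_{-1}^{(d)}=R_0$; reading that term as $s_{\varnothing}=1$, as you do, is what makes the identity hold at $k=0$, and it is what the paper implicitly uses in the $j=d$ step of the next lemma.
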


\begin{proof}
	Note that
	\begin{align*}
		s_{\lambda/\mu}(z)=\begin{cases}
			z^{|\lambda|-|\mu|}, & \text{if $\lambda/\mu$ is a horizontal strip},\\
			0, & \text{otherwise}.
		\end{cases}
	\end{align*}
	By \eqref{eq:alphabet-splitting}, we have
	\begin{align*}
		s_{R_k}(X_N \oplus z) = \sum_{\mu\colon R_k/\mu\HS} s_\mu(X_N) z^{dk-|\mu|}.
	\end{align*}
	Since $R_k/\mu$ is a horizontal strip if and only if $\mu = P_{k-1}^{(j)}$ for a certain $j$ with $0\le j\le d$. The desired identity follows.
\end{proof}

Our next result is related to the auxiliary functions $\Phi$ defined in \eqref{eq:Phi-def}.

\begin{lemma}
	Let $A$ and $B$ be disjoint alphabets and put $X=A\oplus B$. Then
	\begin{align}\label{eq:Phi-AB}
		\dcH_{X\oplus z}(q) \dcH_B(q) - \dcH_X(q) \dcH_{B\oplus z}(q) = \sum_{r\ge 1} q^r \sum_{j=1}^{d-1} z^{d-j} \sum_\beta s_\beta(A) \Phi_{r-1,j}^{\beta}(B).
	\end{align}
\end{lemma}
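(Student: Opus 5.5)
The plan is to expand both sides as power series in $q$ and $z$ and compare coefficients. The tools are Lemma~\ref{le:R-alphabet-splitting} for the alphabets $X\oplus z$ and $B\oplus z$, and the splitting \eqref{eq:alphabet-splitting} for $X=A\oplus B$.

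First, by Lemma~\ref{le:R-alphabet-splitting},
\begin{align*}
	\dcH_{X\oplus z}(q) = \sum_{k\ge 0} q^k\sum_{j=0}^d s_{P_{k-1}^{(j)}}(X) z^{d-j},
\end{align*}
and similarly for $B\oplus z$. Substituting this gives
\begin{align*}
	\dcH_{X\oplus z}\dcH_B - \dcH_X\dcH_{B\oplus z} = \sum_{k,v\ge 0} q^{k+v}\sum_{j=0}^d z^{d-j}\Big(s_{P_{k-1}^{(j)}}(X)s_{R_v}(B) - s_{R_k}(X)s_{P_{v-1}^{(j)}}(B)\Big).
\end{align*}
Next I would expand each Schur function in $X=A\oplus B$ via \eqref{eq:alphabet-splitting}:
\begin{align*}
	s_{P_{k-1}^{(j)}}(X)=\sum_\beta s_\beta(A)s_{P_{k-1}^{(j)}/\beta}(B), \qquad s_{R_k}(X)=\sum_\beta s_\beta(A)s_{R_k/\beta}(B),
\end{align*}
where the sums run over all $\beta$ because $s_{\lambda/\beta}=0$ when $\beta\not\subseteq\lambda$. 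The whole expression then becomes $\sum_\beta s_\beta(A)$ times a series in $q,z$ with coefficients in $\Lambda(B)$.

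The main step is to reindex so that the $\Phi$ functions appear. In the first term, set $u=k-1$ and $r=u+v+1$; then the term carries $q^{r}$ and $s_{P_u^{(j)}/\beta}s_{R_v}$. In the second term, set $v'=v-1$ and $u=k$, so it carries $q^{u+v'+1}$ and $s_{R_u/\beta}s_{P_{v'}^{(j)}}$. Both then sit over $u+v=r-1$. The boundary values $k=0$ in the first term and $v=0$ in the second vanish by the convention $P_{-1}^{(j)}=0$, which gives $r\ge1$. Collecting terms, the coefficient of $q^rz^{d-j}s_\beta(A)$ is exactly $\Phi_{r-1,j}^\beta(B)$ for every $0\le j\le d$.

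It remains to show that the contributions from $j=0$ and $j=d$ cancel, and this is where I expect the bookkeeping to need care. For $j=0$ we have $P_u^{(0)}=R_u$, so $\Phi_{t,0}^\beta=\sum_{u+v=t}(s_{R_u/\beta}s_{R_v}-s_{R_u/\beta}s_{R_v})=0$. For $j=d$ we have $P_u^{(d)}=R_{u+1}$, so
\begin{align*}
	\Phi_{t,d}^\beta=\sum_{u+v=t}s_{R_{u+1}/\beta}s_{R_v}-\sum_{u+v=t}s_{R_u/\beta}s_{R_{v+1}}.
\end{align*}
Both sums equal $\sum_{a+b=t+1,\,a\ge1}s_{R_a/\beta}s_{R_b}$ and $\sum_{a+b=t+1,\,b\ge1}s_{R_a/\beta}s_{R_b}$ respectively. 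Their difference is $s_{R_{t+1}/\beta}s_{R_0}-s_{R_0/\beta}s_{R_{t+1}}$, and this does not vanish in general. So the $j=d$ term has to be checked carefully. The first thing I would try is to recheck the exponent of $q$. In $\dcH_{X\oplus z}$ the $j=d$ piece is $s_{R_k}(X)z^0$ at $q^k$, so its product with $\dcH_B$ already cancels directly against $\dcH_X\dcH_{B\oplus z}$'s $j=d$ piece, namely $s_{R_k}(X)s_{R_v}(B)$. This cancellation happens before any reindexing. Treating the $j=d$ term without the shift, rather than forcing it into $\Phi_{\cdot,d}$, removes it cleanly. The $j=0$ terms cancel as shown above, leaving exactly $1\le j\le d-1$ as claimed.
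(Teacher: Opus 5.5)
Your final argument is correct and is essentially the paper's proof: you expand via Lemma~\ref{le:R-alphabet-splitting}, cancel the $j=d$ pieces directly (both equal $\dcH_X\dcH_B$), dispose of $j=0$, and shift the remaining $1\le j\le d-1$ terms to $u+v=r-1$ with $X=A\oplus B$ split to reach $\Phi_{r-1,j}^\beta$. The only difference from the paper is that you kill $j=0$ after splitting via $\Phi_{t,0}^\beta=0$, while the paper reindexes before splitting.

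You should drop your earlier claim that the boundary terms vanish for every $j$, including $j=d$, since that is what misled you. Lemma~\ref{le:R-alphabet-splitting} at $k=0$ forces $P_{-1}^{(d)}=R_0=\varnothing$, so $s_{P_{-1}^{(d)}}=1$, not $0$. This is why, for $j=d$, the $k=0$ and $v=0$ terms survive and exactly cancel the nonzero $\Phi_{t,d}^\beta=s_{R_{t+1}/\beta}-s_{\varnothing/\beta}s_{R_{t+1}}$ you found. For $0\le j\le d-1$ those terms genuinely vanish, so the source of the anomaly is this convention, not the exponent of $q$.
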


\begin{proof}
	The coefficient of $q^0$ in $\dcH_{X\oplus z}(q) \dcH_B(q) - \dcH_X(q) \dcH_{B\oplus z}(q)$ is $1-1 = 0$. For $r\ge 1$, the coefficient of $q^r$ is
	\begin{align}
		c_r:=\sum_{k+l=r} \big(s_{R_k}(X\oplus z) s_{R_l}(B) - s_{R_k}(X) s_{R_l}(B\oplus z)\big).
	\end{align}
	Applying Lemma~\ref{le:R-alphabet-splitting}, we have
	\begin{align*}
		c_r = \sum_{j=0}^d z^{d-j}\sum_{k+l=r} \big(s_{P_{k-1}^{(j)}}(X)s_{R_l}(B)-s_{R_k}(X)s_{P_{l-1}^{(j)}}(B)\big).
	\end{align*}
	When $j=0$, we have
	\begin{align*}
		&\sum_{k+l=r} \big(s_{P_{k-1}^{(0)}}(X)s_{R_l}(B)-s_{R_k}(X)s_{P_{l-1}^{(0)}}(B)\big)\\
		&\qquad = \sum_{k+l=r} \big(s_{R_{k-1}}(X)s_{R_l}(B)-s_{R_k}(X)s_{R_{l-1}}(B)\big)\\
		&\qquad = \sum_{k+l=r-1} \big(s_{R_k}(X)s_{R_l}(B)-s_{R_k}(X)s_{R_l}(B)\big) = 0.
	\end{align*}
	When $j=d$, we have
	\begin{align*}
		&\sum_{k+l=r} \big(s_{P_{k-1}^{(d)}}(X)s_{R_l}(B)-s_{R_k}(X)s_{P_{l-1}^{(d)}}(B)\big)\\
		&\qquad = \sum_{k+l=r} \big(s_{R_k}(X)s_{R_l}(B)-s_{R_k}(X)s_{R_l}(B)\big) = 0.
	\end{align*}
	Thus,
	\begin{align*}
		c_r = \sum_{j=1}^{d-1} z^{d-j}\sum_{k+l=r-1} \big(s_{P_{k}^{(j)}}(X)s_{R_l}(B)-s_{R_k}(X)s_{P_{l}^{(j)}}(B)\big).
	\end{align*}
	Recalling that $X=A\oplus B$, we apply \eqref{eq:alphabet-splitting} and obtain
	\begin{align*}
		s_{P_{k}^{(j)}}(X) = s_{P_{k}^{(j)}}(A\oplus B) = \sum_\beta s_\beta(A) s_{P_{k}^{(j)}/\beta}(B),
	\end{align*}
	and
	\begin{align*}
		s_{R_k}(X) = s_{R_k}(A\oplus B) = \sum_\beta s_\beta(A) s_{R_k/\beta}(B).
	\end{align*}
	It follows that
	\begin{align*}
		c_r &= \sum_{j=1}^{d-1} z^{d-j} \sum_\beta s_\beta(A) \sum_{k+l=r-1} \big(s_{P_{k}^{(j)}/\beta}(B)s_{R_l}(B)-s_{R_k/\beta}(B)s_{P_{l}^{(j)}}(B)\big)\\
		&= \sum_{j=1}^{d-1} z^{d-j} \sum_\beta s_\beta(A) \Phi_{r-1,j}^{\beta}(B),
	\end{align*}
	as desired.
\end{proof}

Now we are in a position to finalize the proof of Theorem~\ref{thm:main2}.

\begin{proof}[Proof of Theorem~\ref{thm:main2}]
	Let $n\ge m\ge 1$. We set $A=1_{n-m+1}$ and $B=1_{m-1}$ so that $X=1_{n}$. Also, set $z=1$. Then Lemma~\ref{le:dcH-1} tells us that
	\begin{align*}
		H_{n+1}(q)H_{m-1}(q) - H_{n}(q)H_{m}(q) = \dcH_{X\oplus z}(q) \dcH_B(q) - \dcH_X(q) \dcH_{B\oplus z}(q).
	\end{align*}
	Since $\Phi_{r-1,j}^{\beta}$ is Schur positive for every $j$ with $1\le j\le d-1$ according to Theorem~\ref{th:Phi-Schur-positivity}, we conclude from \eqref{eq:Phi-AB} that the above is in $\mathbb{N}[q]$, thereby implying the strong $q$-log-convexity of $(H_n(q))_{n\geq0}$.
\end{proof}

\subsection*{Acknowledgements}

Shane Chern was supported by the Austrian Science Fund (No.~10.55776/F1002). Wenle Shi was supported by the China Scholarship Council (No.~202506060077).

\bibliographystyle{amsplain}

\end{document}